\date{}
\renewcommand{\uppercasenonmath}[1]{}
\numberwithin{equation}{section} \theoremstyle{plain}
\newtheorem*{theorem*}{Theorem A}
\newtheorem*{theorem**}{Theorem B}
\newtheorem{theorem}{Theorem}[section]
\newtheorem{corollary}[theorem]{Corollary}
\newtheorem*{corollary*}{Corollary}
\newtheorem{lemma}[theorem]{Lemma}
\newtheorem*{lemma*}{Lemma}
\newtheorem{proposition}[theorem]{Proposition}
\newtheorem*{proposition*}{Proposition}
\newtheorem{remark}[theorem]{Remark}
\newtheorem*{remark*}{Remark}
\newtheorem{example}[theorem]{Example}
\newtheorem*{example*}{Example}
\newtheorem{definition}[theorem]{Definition}
\newtheorem*{definition*}{Definition}
\newtheorem*{conjecture*}{Conjecture}
\newtheorem*{ack*}{ACKNOWLEDGEMENTS}
\newcommand{\pf}{\noindent\begin {proof}}
\newcommand{\epf}{\end{proof}}
\begin{document}
\begin{center}
{\Large \bf  Frobenius functors, stable equivalences and $K$-theory of Gorenstein projective modules
 \footnotetext{E-mail address: wren@cqnu.edu.cn.}}

\vspace{0.5cm}  Wei Ren\\
{\small   School of Mathematical Sciences, Chongqing Normal University, Chongqing 401331, P.R. China
}
\end{center}


\bigskip
\centerline { \bf  Abstract}
\leftskip10truemm \rightskip10truemm
Owing to the difference in $K$-theory, an example by Dugger and Shipley implies that the equivalence of stable categories of Gorenstein projective modules should not be a Quillen equivalence. We give a sufficient and necessary condition for the Frobenius pair of faithful functors between two abelian categories to be a Quillen equivalence, which is also equivalent to that the Frobenius functors induce mutually inverse equivalences between stable categories of Gorenstein projective objects.

We show that the category of Gorenstein projective objects is a Waldhausen category, then Gorenstein $K$-groups are introduced and characterized. As applications, we show that stable equivalences of Morita type preserve Gorenstein $K$-groups, CM-finiteness and CM-freeness. Two specific examples of path algebras are presented to illustrate the results, for which the Gorenstein $K_0$ and $K_1$-groups are calculated.
\bigskip

{\noindent \it Key Words:} Frobenius functor, Gorenstein projective, Quillen equivalence, Stable equivalence of Morita type, Gorenstein $K$-group\\
{\it 2010 MSC:}  18A40, 18G25, 18E30, 16D20, 55P65\\

\leftskip0truemm \rightskip0truemm \vbox to 0.2cm{}

\section{\bf Introduction}

The origin of Gorenstein projective modules may date back to the study of $G$-dimension by Auslander and Bridger \cite{AB69} in 1960s. Under mild conditions, Gorenstein projective modules are also known as totally reflexive modules and maximal Cohen-Macaulay modules.  By \cite[Theorem 3.7]{DEH18}, the category formed by modules of finite Gorenstein projective dimension, which is a weakly idempotent complete exact category, admits a Gorenstein projective model structure. Recall that a model category, introduced by Quillen \cite{Qui67}, refers to a category with three specified classes of morphisms, called fibrations, cofibrations and weak equivalences, which satisfy a few axioms that are deliberately reminiscent of properties of topological spaces. The homotopy category associated to a model category is obtained by formally inverting the weak equivalences, while the objects are the same. For the sake of convenience and completeness, we briefly recall some notions on model categories and Quillen equivalences in Appendix \ref{sec:ModCat}, and refer to \cite{Qui67, Hov99, DS95} for more details.

There is an advantage that any Quillen equivalence between stable model categories induces a weak equivalence of $K$-theory spaces, and then yields isomorphic $K$-groups; see \cite[Corollary 3.10]{DS04}. However, Dugger and Shipley constructed in \cite{DS09} an example of triangulated-equivalent model categories which are not Quillen equivalent. For a fixed prime $p$, let $R = \mathbb{Z}/p^2$ and $R_{\epsilon}= k[\epsilon]/(\epsilon^2)$, where $k= \mathbb{Z}/p$. There are model structures $\mathcal{M}_{R}$ and $\mathcal{M}_{R_{\epsilon}}$ for the stable categories, such that the homotopy categories $\mathrm{Ho}(\mathcal{M}_{R})$ and $\mathrm{Ho}(\mathcal{M}_{R_{\epsilon}})$ are equivalent as triangulated categories. Schlichting \cite{Sch02} observed that when $p > 3$, the $K$-theory of the finitely-generated objects in each category differs starting at $K_{4}$: $K_{4}(\mathcal{M}_{R})\cong \mathbb{Z}/p^{2}$, whereas $K_{4}(\mathcal{M}_{R_{\epsilon}})\cong \mathbb{Z}/p \oplus \mathbb{Z}/p$. Hence, the difference in $K$-theory implies that $\mathcal{M}_{R}$ and $\mathcal{M}_{R_{\epsilon}}$ are not Quillen equivalent.

Note that every module over $R$ or $R_{\epsilon}$ is Gorenstein projective, and then there are equivalences $\mathrm{Ho}(\mathcal{M}_{R})\simeq \underline{\mathcal{GP}}(R)$ and $\mathrm{Ho}(\mathcal{M}_{R_{\epsilon}})\simeq \underline{\mathcal{GP}}(R_{\epsilon})$. Thus, in general we should not expect that equivalences of Gorenstein stable categories $\underline{\mathcal{GP}}$ of two rings are induced by Quillen equivalences with respect to Gorenstein projective model structures. A question raised naturally: when the equivalence between stable categories of Gorenstein projective modules is induced from a Quillen equivalence?

Our first main result gives an affirmative answer to the above question, see Theorem \ref{thm:FQE}. We give a sufficient and necessary condition for the Frobenius pair $(F, H)$ of faithful functors to be a Quillen equivalence with respect to Gorenstein projective model structures, which is also equivalent to that $(F, H)$ is the mutual inverse between $\underline{\mathcal{GP}}(\mathcal{A})$ and $\underline{\mathcal{GP}}(\mathcal{B})$. A typical example of Frobenius functor is the induction functor ${\rm Ind}_{H}^{G} = RG\otimes_{RH}-$ for group rings, where $H\subseteq G$ is a subgroup with finite index. Faithful Frobenius functors arise naturally in stable equivalences of Morita type, and in singular equivalences of Morita type; see for examples \cite{DM07, LX07, Xi08, ZZ13}.

Furthermore, we intend to study the $K$-theory with respect to Gorenstein projective objects. We first show in Lemma \ref{lem:Wad-GP} that the category of Gorenstein projective objects $\mathcal{GP}(\mathcal{A})$ is a Waldhausen category. Then we can apply Waldhausen's construction to define the notion of Gorenstein $K$-groups (Definition \ref{def:GK-group}). Especially, for finitely generated Gorenstein projective modules over a finite dimensional algebra $A$, Gorenstein $K_0$-group of $A$ is the Grothendieck group of isomorphism classes in $\underline{\mathcal{GP}}(A)$, or equivalently weak equivalence classes of $\mathcal{GP}(A)$; Gorenstein $K_1$-group of $A$ is generated by automorphisms in $\underline{\mathcal{GP}}(A)$, or equivalently weak self-equivalence classes of maps in $\mathcal{GP}(A)$; see Definition \ref{def:K0} and \ref{def:K1}. Recall that the classical $K_0$-group (Grothendieck group) and $K_1$-group (Bass-Whitehead group) of a ring are defined via finitely generated projective modules in the category of modules. Roughly speaking, we get a Gorenstein version of $K_0$ and $K_1$-groups defined in the stable category $\underline{\mathcal{GP}}(A)$ via finitely generated Gorenstein projective modules.

Recall that an algebra $A$ is CM-finite \cite{Bel05}, provided that there are only finitely many isomorphism classes of indecomposable finitely generated Gorenstein projective $A$-modules; $A$ is CM-free if all Gorenstein projective $A$-modules are projective. If $A$ is an Artin algebra, then $A$ is CM-finite if and only if there exists a finitely generated Gorenstein projective module $G$ such that $\mathcal{GP}(A) = \mathrm{add}G$.

There is an interesting relation between Gorenstein  $K_1$-groups and classical $K_1$-groups. Let $A$ be a CM-finite Artin algebra with $\mathcal{GP}(A) = \mathrm{add}G$, and let $\Lambda = \underline{\mathrm{End}}(G)$ be the endomorphism ring of $G$ in the stable category $\underline{\mathcal{GP}}(A)$. We prove in Theorem \ref{thm:CharK1} that Gorenstein $K_1$-group of $A$ is isomorphic to the classical $K_1$-group of the stable endomorphism ring $\Lambda$.

As applications, in Proposition \ref{prop:AdjType} we show that stable equivalences of Morita type yield the Frobenius pair of faithful functors which satisfies the equivalent conditions in Theorem \ref{thm:FQE}. Furthermore, it follows that Gorenstein $K$-groups, CM-finiteness and CM-freeness are invariant under stable equivalences of Morita type; see Corollary \ref{cor:invAdjType}. Following these invariant properties, Gorenstein $K_0$ and $K_1$-groups of some specific path algebras are explicitly calculated; see Proposition \ref{prop:calK0}, \ref{prop:calK1} and \ref{prop:invEg2}. It is worth noting that it is unknown whether stable equivalences of Morita type preserve classical $K$-groups, although classical $K$-groups are Morita invariants.

The structure of this paper is as follows. In Section 2, we recall notions and facts on Gorenstein projective model structure. The Quillen equivalences and stable equivalences induced by Frobenius functors are proved in Section 3. Gorenstein $K$-groups are studied in Section 4, which are invariant under Quillen equivalences. As applications, we show in Section 5 that some invariants, such as Gorenstein $K$-groups, CM-finiteness and CM-freeness, are preserved by stable equivalences of Morita type. Two specific examples are presented in Section 6. In the Appendix A and B, we briefly recall Waldhausen's construction of simplicial categories and the $K$-theory spaces, as well as Quillen model categories, respectively.

\section{\bf Preliminaries}

Let $\mathcal{A}$ be a complete and co-complete abelian category with enough projective objects. Recall that an acyclic complex of projective objects
$$\mathbf{P} = \cdots\longrightarrow P_{n+1}\longrightarrow P_{n}\longrightarrow P_{n-1}\longrightarrow\cdots$$
is said to be \emph{totally acyclic}, provided it remains acyclic after applying $\mathrm{Hom}_{\mathcal{A}}(-, P)$ for any projective object $P\in \mathcal{A}$. An object $M\in \mathcal{A}$ is \emph{Gorenstein projective} \cite{EJ00} if it is isomorphic to a cycle of such a totally acyclic complex of projectives, that is, if $M\cong Z_i(\mathbf{P})$ for some integer $i$. Denote by $\mathcal{GP}(\mathcal{A})$ and $\mathcal{P}(\mathcal{A})$ the full subcategories of $\mathcal{A}$ consisting of Gorenstein projective objects and projective objects, respectively. It is clear that $\mathcal{P}(\mathcal{A})\subseteq \mathcal{GP}(\mathcal{A})$.

Let $X$ be an object of $\mathcal{A}$. The projective dimension of $X$ is denoted by $\mathrm{pd}_\mathcal{A}(X)$. The \emph{Gorenstein projective dimension} $\mathrm{Gpd}_\mathcal{A}(X)$ of $X$ is defined by declaring that
$\mathrm{Gpd}_\mathcal{A}(X)\leq n$ if and only if there is an exact sequence $0\rightarrow G_{n}\rightarrow G_{n-1} \rightarrow \cdots \rightarrow G_{0}\rightarrow X\rightarrow 0$ with each $G_{i}\in \mathcal{GP}(\mathcal{A})$.

Recall that a \emph{Frobenius category} is an exact category $\mathcal{E}$ with enough projectives and enough injectives and where the classes of projectives and injectives coincide. The stable category $\underline{\mathcal{E}}$
is the triangulated category $\mathcal{E}/\sim$ where the relation ``$\sim$'' is defined by $f\sim g$ provided that $f-g$ factors through a projective-injective object.

It is well known that $\mathcal{GP}(\mathcal{A})$ is a Frobenius category, where the projective-injective objects are precisely projective objects in $\mathcal{A}$; see for example \cite[Proposition 2.2]{DEH18}. Thus, the stable category $\underline{\mathcal{GP}}(\mathcal{A})$ is triangulated. We will see in Proposition \ref{prop:GPModel} that $\underline{\mathcal{GP}}(\mathcal{A})$ can also be realized by homotopy category of a Gorenstein projective model category.

The following make sense when $\mathcal{A}$ is an exact category (not necessarily abelian). Assume that the exact category $\mathcal{A}$ has enough projectives and enough injectives. The Yoneda Ext bifunctor is denoted by $\mathrm{Ext}_{\mathcal{A}}(-, -)$. A pair of classes $(\mathcal{X}, \mathcal{Y})$ in $\mathcal{A}$ is a \emph{cotorsion pair} provided that $\mathcal{X} =  {^\perp}\mathcal{Y}$ and $\mathcal{Y} = \mathcal{X}^{\perp}$, where the left orthogonal class $^{\perp}\mathcal{Y}$ consists of $X$ such that $\mathrm{Ext}^{\geq 1}_{\mathcal{A}}(X, Y) = 0$ for all $Y\in \mathcal{Y}$, and the right orthogonal class $\mathcal{X}^{\perp}$ is defined similarly. The cotorsion pair $(\mathcal{X}, \mathcal{Y})$ is said to be {\em complete} if for any object $M\in \mathcal{A}$, there exist short exact sequences $0\rightarrow Y\rightarrow X \rightarrow M \rightarrow 0$ and $0\rightarrow M\rightarrow Y' \rightarrow X' \rightarrow 0$ with $X, X'\in \mathcal{X}$ and $Y, Y'\in \mathcal{Y}$. In this case, $X \rightarrow M$ is called a \emph{special $\mathcal{X}$-precover} (or, \emph{special right $\mathcal{X}$-approximation}) of $M$, and $M\rightarrow Y'$ is called  a \emph{special $\mathcal{Y}$-preenvelope} (or, \emph{special left $\mathcal{Y}$-approximation}) of $M$.

We recall some basic notions of model categories in Appendix B, and refer to \cite{Qui67, Hov99, DS95} for more details. Let $\mathcal{A}$ be a {\em weakly idempotent complete exact category} in the sense of
\cite[Definition 2.2]{Gil11}. By \cite[Theorem 3.3]{Gil11} there is a correspondence between complete cotorsion pairs and model structure, that is, $\mathcal{A}$ admits a model structure if and only if there is a triple $(\mathcal{A}_{c}, \mathcal{A}_{tri}, \mathcal{A}_{f})$ of subcategories, for which $\mathcal{A}_{tri}$ is thick in the sense of \cite[Definition 3.2]{Gil11}, and both $(\mathcal{A}_{c}, \mathcal{A}_{f}\cap \mathcal{A}_{tri})$ and $(\mathcal{A}_{c}\cap \mathcal{A}_{tri}, \mathcal{A}_{f})$ are complete cotorsion pairs. Then, the corresponding model structure can be denoted by the triple $(\mathcal{A}_{c}, \mathcal{A}_{tri}, \mathcal{A}_{f})$. We refer to \cite[Theorem 2.2]{Hov02} or \cite[Chapter VIII]{BR07} for the similar correspondence in abelian categories.

For a model category $\mathcal{A}$, its homotopy category $\mathrm{Ho}(\mathcal{A})$ is the localization of $\mathcal{A}$ with respect to the collection of weak equivalences. The subcategory $\mathcal{A}_{cf} = \mathcal{A}_{c}\cap\mathcal{A}_{f}$ of cofibrant-fibrant objects is a Frobenius category, with $\omega = \mathcal{A}_{c}\cap\mathcal{A}_{tri}\cap\mathcal{A}_{f}$ being the class of projective-injective objects. Then the stable category $\underline{\mathcal{A}_{cf}} = \mathcal{A}_{cf}/\omega$ is a triangulated category. Moreover, we have an equivalence of triangulate categories $\mathrm{Ho}(\mathcal{A})\simeq \underline{\mathcal{A}_{cf}}$ by
\cite[Theorem 1.2.10]{Hov99}.

Let $\mathcal{A}$ be an abelian category with enough projective objects. We use $\mathcal{GP}^{<\infty}(\mathcal{A})$ and  $\mathcal{P}^{<\infty}(\mathcal{A})$ to denote the subcategories of objects in $\mathcal{A}$ with finite Gorenstein projective dimension and finite projective dimension, respectively. Note that $\mathcal{GP}^{<\infty}(\mathcal{A})$ is a weakly idempotent complete exact category \cite[Lemma 3.3]{DEH18}, and $\mathcal{GP}(\mathcal{A})\cap \mathcal{P}^{<\infty}(\mathcal{A}) = \mathcal{P}(\mathcal{A})$ holds by \cite[Proposition 10.2.3]{EJ00}.

Recall that an abelian category $\mathcal{A}$ with enough projective and injective objects is called a \emph{Gorenstein category} \cite[Definition VII 2.1]{BR07} if both the supremum of projective dimension of all injective objects $\mathrm{spli}(\mathcal{A})$, and the supremum of injective dimension of all projective objects $\mathrm{silp}(\mathcal{A})$, are finite; compare with \cite[Definition 2.18]{EEGR}. This is equivalent to that $\mathcal{A}$ is of finite global Gorenstein projective dimension, that is, $\mathcal{A} = \mathcal{GP}^{<\infty}(\mathcal{A})$); see for example \cite[Proposition VII 1.3]{BR07} or \cite[Theorem A.6]{CR20}. If the cateogry of $R$-modules is a Gorenstein category, then $R$ is called a \emph{Gorenstein regular} ring (also named left/right-Gorenstein ring in \cite[Definition VII 2.5]{BR07}). Every \emph{Iwanaga-Gorenstein ring} (i.e. two-sided noetherian ring with finite self-injective dimension on both sides) is Gorenstein regular, while the converse may not be true in general.

Over an Iwanaga-Gorenstein ring, there is a Gorenstein projective model structure on the category of modules; see \cite[Theorem 8.6]{Hov02}. We recently generalize this result to Gorenstein regular rings; see
\cite[Theorem 2.7]{Ren18}. However, in order to obtain the Gorenstein projective model structure, it is not necessary to restrict the base ring $R$ if one only considers the weakly idempotent complete exact category $\mathcal{GP}^{<\infty}(R)$. In fact, $R$ is a Gorenstein regular ring if and only if $\mathcal{GP}^{<\infty}(R)$ equals to the category of all $R$-modules.

\begin{proposition}\label{prop:GPModel}(\cite[Theorem 3.7]{DEH18})
On the category $\mathcal{GP}^{<\infty}(\mathcal{A})$, there is a model structure $$(\mathcal{GP}(\mathcal{A}), \mathcal{P}^{<\infty}(\mathcal{A}), \mathcal{GP}^{<\infty}(\mathcal{A})).$$
In this case, cofibrations and trivial cofibrations are monomorphisms with cokernel in $\mathcal{GP}(\mathcal{A})$ and $\mathcal{P}(\mathcal{A})$, respectively; every epimorphism is a fibration, and trivial fibrations are epimorphisms with kernel in  $\mathcal{P}^{<\infty}(\mathcal{A})$; weak equivalences are morphisms which factor as a trivial cofibration followed by a trivial fibration.

Moreover, the homotopy category of this model category is equivalent, as a triangulated category, to the stable category of Gorenstein projective objects in $\mathcal{A}$; symbolically denoted with
$$\mathrm{Ho}(\mathcal{GP}^{<\infty}(\mathcal{A})) \simeq \underline{\mathcal{GP}}(\mathcal{A}).$$
\end{proposition}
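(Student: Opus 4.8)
The plan is to deduce the model structure from Gillespie's correspondence \cite[Theorem 3.3]{Gil11} applied to the weakly idempotent complete exact category $\mathcal{E}:=\mathcal{GP}^{<\infty}(\mathcal{A})$. Concretely, I would take as candidate triple $\mathcal{A}_{c}=\mathcal{GP}(\mathcal{A})$, $\mathcal{A}_{tri}=\mathcal{P}^{<\infty}(\mathcal{A})$, $\mathcal{A}_{f}=\mathcal{GP}^{<\infty}(\mathcal{A})=\mathcal{E}$, and then verify three things: (i) $\mathcal{P}^{<\infty}(\mathcal{A})$ is a thick subcategory of $\mathcal{E}$; (ii) $\bigl(\mathcal{GP}(\mathcal{A}),\mathcal{P}^{<\infty}(\mathcal{A})\bigr)$ is a complete cotorsion pair in $\mathcal{E}$, noting that $\mathcal{A}_{f}\cap\mathcal{A}_{tri}=\mathcal{P}^{<\infty}(\mathcal{A})$; (iii) $\bigl(\mathcal{P}(\mathcal{A}),\mathcal{GP}^{<\infty}(\mathcal{A})\bigr)$ is a complete cotorsion pair in $\mathcal{E}$, noting that $\mathcal{A}_{c}\cap\mathcal{A}_{tri}=\mathcal{GP}(\mathcal{A})\cap\mathcal{P}^{<\infty}(\mathcal{A})=\mathcal{P}(\mathcal{A})$ by \cite[Proposition 10.2.3]{EJ00}. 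Thickness in (i) is routine dimension bookkeeping: $\mathcal{P}^{<\infty}(\mathcal{A})$ is closed under direct summands (using weak idempotent completeness) and satisfies the two-out-of-three property on short exact sequences by the standard shifting estimates for projective dimension.

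For (iii) I would argue that both cotorsion-pair conditions are essentially formal: since $\mathcal{E}$ has enough projectives and $\mathcal{P}(\mathcal{A})\subseteq\mathcal{GP}(\mathcal{A})\subseteq\mathcal{E}$, one gets $\mathcal{P}(\mathcal{A})^{\perp}=\mathcal{E}$, while ${}^{\perp}\mathcal{E}=\mathcal{P}(\mathcal{A})$ because a short exact sequence $0\to Y\to P\to X\to 0$ with $P$ projective and $X\in{}^{\perp}\mathcal{E}$ must split, exhibiting $X$ as a direct summand of $P$; completeness then follows by taking projective presentations on one side and identity maps on the other. The substantive point is (ii). Orthogonality $\mathrm{Ext}^{\geq1}_{\mathcal{A}}\bigl(\mathcal{GP}(\mathcal{A}),\mathcal{P}^{<\infty}(\mathcal{A})\bigr)=0$ reduces, by dimension shifting in the second variable, to the vanishing $\mathrm{Ext}^{\geq1}_{\mathcal{A}}(G,P)=0$ for $G$ Gorenstein projective and $P$ projective, which is exactly one half of total acyclicity of a complete projective resolution of $G$.

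The main obstacle, and the heart of the argument, is completeness of $\bigl(\mathcal{GP}(\mathcal{A}),\mathcal{P}^{<\infty}(\mathcal{A})\bigr)$: every $M\in\mathcal{GP}^{<\infty}(\mathcal{A})$ must admit a special $\mathcal{GP}(\mathcal{A})$-precover $0\to L\to G\to M\to 0$ with $G\in\mathcal{GP}(\mathcal{A})$ and $\mathrm{pd}_{\mathcal{A}}(L)<\infty$, together with a special $\mathcal{P}^{<\infty}(\mathcal{A})$-preenvelope $0\to M\to L'\to G'\to 0$ with $\mathrm{pd}_{\mathcal{A}}(L')<\infty$ and $G'\in\mathcal{GP}(\mathcal{A})$. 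This is the Auslander--Buchweitz approximation theorem in the present setting; I would prove it by induction on $n=\mathrm{Gpd}_{\mathcal{A}}(M)$, the base case $n\le 0$ being trivial, and the inductive step splicing the defining exact sequence $0\to G_{n}\to\cdots\to G_{0}\to M\to 0$ with projective resolutions and using horseshoe/dimension-shifting arguments, together with closure of $\mathcal{GP}(\mathcal{A})$ under extensions and under kernels of epimorphisms onto objects of $\mathcal{GP}(\mathcal{A})$. Once these approximation sequences are available, they also upgrade orthogonality to the full equalities $\mathcal{GP}(\mathcal{A})={}^{\perp}\mathcal{P}^{<\infty}(\mathcal{A})$ and $\mathcal{P}^{<\infty}(\mathcal{A})=\mathcal{GP}(\mathcal{A})^{\perp}$ inside $\mathcal{E}$, via the usual device that an approximation sequence with the relevant $\mathrm{Ext}$-vanishing splits, so the object in question is a direct summand, combined with closure of $\mathcal{GP}(\mathcal{A})$ and $\mathcal{P}^{<\infty}(\mathcal{A})$ under summands.

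With (i)--(iii) in place, \cite[Theorem 3.3]{Gil11} produces the model structure, and its cofibrations, trivial cofibrations, fibrations, trivial fibrations and weak equivalences are obtained simply by substituting $\mathcal{A}_{c}=\mathcal{GP}(\mathcal{A})$, $\mathcal{A}_{tri}=\mathcal{P}^{<\infty}(\mathcal{A})$, $\mathcal{A}_{f}=\mathcal{GP}^{<\infty}(\mathcal{A})$ into the general description; in particular $\mathcal{A}_{c}\cap\mathcal{A}_{tri}=\mathcal{P}(\mathcal{A})$ yields the trivial cofibrations, and $\mathcal{A}_{f}=\mathcal{E}$ yields that every epimorphism is a fibration. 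For the last assertion, the cofibrant--fibrant objects are $\mathcal{A}_{cf}=\mathcal{GP}(\mathcal{A})\cap\mathcal{GP}^{<\infty}(\mathcal{A})=\mathcal{GP}(\mathcal{A})$, and the projective--injective objects of this Frobenius category are $\omega=\mathcal{A}_{c}\cap\mathcal{A}_{tri}\cap\mathcal{A}_{f}=\mathcal{P}(\mathcal{A})$; hence the equivalence $\mathrm{Ho}(\mathcal{A})\simeq\underline{\mathcal{A}_{cf}}$ recalled before the proposition (from \cite[Theorem 1.2.10]{Hov99}) gives the triangle equivalence $\mathrm{Ho}(\mathcal{GP}^{<\infty}(\mathcal{A}))\simeq\mathcal{GP}(\mathcal{A})/\mathcal{P}(\mathcal{A})=\underline{\mathcal{GP}}(\mathcal{A})$, as claimed.
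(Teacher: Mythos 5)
Your proposal is correct, and it matches the strategy of the cited source: the paper itself gives no proof, attributing the statement to \cite[Theorem 3.7]{DEH18}, which establishes it precisely via Gillespie's correspondence \cite[Theorem 3.3]{Gil11} applied to the weakly idempotent complete exact category $\mathcal{GP}^{<\infty}(\mathcal{A})$ with the two complete cotorsion pairs $(\mathcal{GP}(\mathcal{A}),\mathcal{P}^{<\infty}(\mathcal{A}))$ and $(\mathcal{P}(\mathcal{A}),\mathcal{GP}^{<\infty}(\mathcal{A}))$, the first obtained by Auslander--Buchweitz approximation. Your identification of $\mathcal{A}_{cf}=\mathcal{GP}(\mathcal{A})$, $\omega=\mathcal{P}(\mathcal{A})$ and the appeal to \cite[Theorem 1.2.10]{Hov99} for the triangle equivalence $\mathrm{Ho}(\mathcal{GP}^{<\infty}(\mathcal{A}))\simeq\underline{\mathcal{GP}}(\mathcal{A})$ is likewise the argument used there.
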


\section{\bf Frobenius functors and stable categories of Gorenstein projective objects}

Throughout this section, we assume that both $\mathcal{A}$ and $\mathcal{B}$ are abelian categories with enough projective objects. Let $F: \mathcal{A}\rightarrow \mathcal{B}$ and $H: \mathcal{B}\rightarrow \mathcal{A}$ be two additive functors. We say that $(F, H)$ is a \emph{Frobenius pair} between $\mathcal{A}$ and $\mathcal{B}$,  provided that both $(F, H)$ and $(H, F)$ are adjoint pairs. If $F$ fits into a Frobenius pair $(F, H)$, then it is called  a \emph{Frobenius functor}. Note that $H$ is also a Frobenius functor, i.e. Frobenius functors always appear in pairs.

We refer to \cite[Definition 1.3.1]{Hov99} or Definition \ref{def:QuiFun} for the notion of \emph{Quillen adjunction}.

\begin{lemma}\label{lem:Qu-Adj}
Let $F: \mathcal{A}\leftrightarrows \mathcal{B} :H$ be an adjoint pair. Assume that $F$ is an exact functor. The following are equivalent:
\begin{enumerate}
\item $(F, H)$ is a Quillen adjunction between the model categories $\mathcal{GP}^{<\infty}(\mathcal{A})$ and $\mathcal{GP}^{<\infty}(\mathcal{B})$;
\item $F(\mathcal{GP}(\mathcal{A}))\subseteq \mathcal{GP}(\mathcal{B})$ and $F(\mathcal{P}(\mathcal{A}))\subseteq \mathcal{P}(\mathcal{B})$;
\item $H$ is an exact functor, $H(\mathcal{GP}^{<\infty}(\mathcal{B}))\subseteq \mathcal{GP}^{<\infty}(\mathcal{A})$ and $H(\mathcal{P}^{<\infty}(\mathcal{B}))\subseteq \mathcal{P}^{<\infty}(\mathcal{A})$.
\end{enumerate}
\end{lemma}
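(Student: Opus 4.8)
The plan is to establish a cycle of implications $(1)\Rightarrow(2)\Rightarrow(3)\Rightarrow(1)$, using in each step the explicit description of (trivial) cofibrations and (trivial) fibrations recorded in Proposition~\ref{prop:GPModel}, together with the standard fact that a left adjoint between model categories is left Quillen if and only if it preserves cofibrations and trivial cofibrations (equivalently, the right adjoint preserves fibrations and trivial fibrations).

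First I would prove $(1)\Rightarrow(2)$. Assume $(F,H)$ is a Quillen adjunction, so $F$ preserves cofibrations and trivial cofibrations. By Proposition~\ref{prop:GPModel}, for $G\in\mathcal{GP}(\mathcal{A})$ the map $0\rightarrow G$ is a cofibration, hence $0\rightarrow F(G)$ is a cofibration in $\mathcal{GP}^{<\infty}(\mathcal{B})$, i.e.\ $F(G)\in\mathcal{GP}(\mathcal{B})$; this gives $F(\mathcal{GP}(\mathcal{A}))\subseteq\mathcal{GP}(\mathcal{B})$. Similarly, for $P\in\mathcal{P}(\mathcal{A})$ the map $0\rightarrow P$ is a trivial cofibration, so $0\rightarrow F(P)$ is a trivial cofibration, i.e.\ $F(P)\in\mathcal{P}(\mathcal{B})$, giving $F(\mathcal{P}(\mathcal{A}))\subseteq\mathcal{P}(\mathcal{B})$. (Exactness of $F$ is used to ensure $F$ sends the monomorphism $0\rightarrow G$ to a monomorphism, and more importantly is needed to run the other implications; here it is automatic.)

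Next, $(2)\Rightarrow(1)$. A general cofibration in $\mathcal{GP}^{<\infty}(\mathcal{A})$ is a monomorphism $X\rightarrowtail Y$ with $Y/X\in\mathcal{GP}(\mathcal{A})$. Since $F$ is exact, applying $F$ yields a short exact sequence $0\rightarrow F(X)\rightarrow F(Y)\rightarrow F(Y/X)\rightarrow 0$ with $F(Y/X)\in\mathcal{GP}(\mathcal{B})$ by hypothesis, so $F(X)\rightarrowtail F(Y)$ is again a cofibration; the same argument with $\mathcal{P}$ in place of $\mathcal{GP}$ handles trivial cofibrations. One must also check that $F$ maps $\mathcal{GP}^{<\infty}(\mathcal{A})$ into $\mathcal{GP}^{<\infty}(\mathcal{B})$ so that $F$ is defined as a functor between these exact categories: this follows by induction on Gorenstein projective dimension, resolving $X$ by objects of $\mathcal{GP}(\mathcal{A})$, applying the exact functor $F$, and using $F(\mathcal{GP}(\mathcal{A}))\subseteq\mathcal{GP}(\mathcal{B})$. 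Hence $F$ is left Quillen, i.e.\ $(F,H)$ is a Quillen adjunction.

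Finally I would prove the equivalence with $(3)$, which is the step I expect to be the main obstacle since it trades a condition on the left adjoint $F$ for a condition on the right adjoint $H$. The key input is adjunction duality: since $(F,H)$ is an adjoint pair with $F$ exact (hence preserving epimorphisms and, being a left adjoint, all colimits), $H$ preserves monomorphisms and is left exact; the extra content is that $F$ exact forces $H$ to be exact as well, which one sees from the fact that a right adjoint whose left adjoint is exact preserves injective objects, combined with a dimension-shifting or resolution argument — alternatively, exactness of $H$ is equivalent to $F$ preserving projectives, which is part of~(2). For the inclusion statements, note $H$ preserves fibrations automatically (every epimorphism is a fibration and $H$, being exact once we know it, preserves epimorphisms), so being right Quillen is equivalent to $H$ preserving trivial fibrations; a trivial fibration is an epimorphism with kernel in $\mathcal{P}^{<\infty}(\mathcal{B})$, and applying the exact functor $H$ shows this reduces exactly to $H(\mathcal{P}^{<\infty}(\mathcal{B}))\subseteq\mathcal{P}^{<\infty}(\mathcal{A})$, while $H(\mathcal{GP}^{<\infty}(\mathcal{B}))\subseteq\mathcal{GP}^{<\infty}(\mathcal{A})$ is the requirement that $H$ be defined as a functor between the two model categories in the first place. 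Since $(F,H)$ is a Quillen adjunction if and only if $H$ is right Quillen, this yields $(1)\Leftrightarrow(3)$, and translating $H(\mathcal{P}^{<\infty}(\mathcal{B}))\subseteq\mathcal{P}^{<\infty}(\mathcal{A})$ back through the adjunction (using $\mathrm{Ext}$ vanishing and $\mathcal{GP}(\mathcal{A})\cap\mathcal{P}^{<\infty}(\mathcal{A})=\mathcal{P}(\mathcal{A})$) recovers $F(\mathcal{P}(\mathcal{A}))\subseteq\mathcal{P}(\mathcal{B})$, closing the loop. The delicate point to get right is precisely the claim that $F$ exact $\Rightarrow$ $H$ exact, and keeping careful track of which inclusions are "definedness" conditions versus genuine Quillen conditions.
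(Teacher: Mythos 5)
Your overall strategy matches the paper's: characterize Quillen adjunctions via Hovey's Lemma~1.3.4 (left adjoint preserves cofibrations and trivial cofibrations $\Leftrightarrow$ right adjoint preserves fibrations and trivial fibrations), then translate through the explicit (trivial) cofibration/fibration descriptions of Proposition~\ref{prop:GPModel}. Your treatment of $(1)\Leftrightarrow(2)$ is correct and is essentially the paper's argument.

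The problem is in your handling of $(3)$, specifically the sentence claiming that ``$F$ exact forces $H$ to be exact as well, which one sees from the fact that a right adjoint whose left adjoint is exact preserves injective objects, combined with a dimension-shifting or resolution argument.'' This is false as a general principle: $F$ exact does \emph{not} imply $H$ exact. Take $F$ to be the exact forgetful functor from $\mathbb{Q}$-modules to $\mathbb{Z}$-modules; its right adjoint $H = \mathrm{Hom}_{\mathbb{Z}}(\mathbb{Q},-)$ is not exact, since $\mathbb{Q}$ is not projective over $\mathbb{Z}$. Moreover, the purported mechanism does not work: preservation of injective objects is not enough to conclude exactness. The correct statement — which you do mention in your ``alternatively'' clause — is that $H$ is exact if and only if $F$ preserves projectives, i.e.\ $F(\mathcal{P}(\mathcal{A}))\subseteq\mathcal{P}(\mathcal{B})$; this is exactly the cited ingredient in the paper, \cite[Lemma~2.1]{CR20}, and it is needed here because it is \emph{not} automatic. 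So the right logical path is: from $(2)$ (equivalently from $(1)$), one knows $F(\mathcal{P}(\mathcal{A}))\subseteq\mathcal{P}(\mathcal{B})$, hence $H$ is exact; only then do you read off the two inclusion conditions by applying $H$ to the short exact sequences defining fibrations and trivial fibrations. You should delete the false claim and its bogus justification, and rely squarely on the ``alternatively'' version. The remark that $H(\mathcal{GP}^{<\infty}(\mathcal{B}))\subseteq\mathcal{GP}^{<\infty}(\mathcal{A})$ is the ``definedness'' requirement is fine, and the rest of your translation of trivial fibrations to $H(\mathcal{P}^{<\infty}(\mathcal{B}))\subseteq\mathcal{P}^{<\infty}(\mathcal{A})$ is correct once $H$'s exactness is properly in hand.
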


\begin{proof}
An adjoint pair of functors $(F, H)$ between model categories is a Quillen adjunction if $F$ preserves cofibrations and trivial cofibrations, or equivalently, $H$ preserves fibrations and trivial fibrations;
see \cite[Lemma 1.3.4]{Hov99}. Since $F$ is exact, it is direct that $F$ preserves cofibrations if and only if $F(\mathcal{GP}(\mathcal{A}))\subseteq \mathcal{GP}(\mathcal{B})$, and $F$ preserves trivial cofibrations if and only if $F(\mathcal{P}(\mathcal{A}))\subseteq \mathcal{P}(\mathcal{B})$. By \cite[Lemma 2.1]{CR20}, the functor $H$ is exact if and only if $F(\mathcal{P}(\mathcal{A}))\subseteq \mathcal{P}(\mathcal{B})$. Then $H$ preserves fibrations and trivial fibrations if and only if $H(\mathcal{GP}^{<\infty}(\mathcal{B}))\subseteq \mathcal{GP}^{<\infty}(\mathcal{A})$ and $H(\mathcal{P}^{<\infty}(\mathcal{B}))\subseteq \mathcal{P}^{<\infty}(\mathcal{A})$. This completes the proof.
\end{proof}

\begin{corollary}\label{cor:Qu-Adj}
Let $F: \mathcal{A}\leftrightarrows \mathcal{B} :H$ be an adjoint pair of exact functors. If $H(\mathcal{P}(\mathcal{B}))\subseteq \mathcal{P}(\mathcal{A})$, then  $(F, H)$ is a Quillen adjunction between the model categories $\mathcal{GP}^{<\infty}(\mathcal{A})$ and $\mathcal{GP}^{<\infty}(\mathcal{B})$.
\end{corollary}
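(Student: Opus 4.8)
The plan is to deduce Corollary \ref{cor:Qu-Adj} from Lemma \ref{lem:Qu-Adj} by verifying condition (3) (or, equivalently, condition (2)) of that lemma under the hypothesis that $(F,H)$ is an adjoint pair of exact functors with $H(\mathcal{P}(\mathcal{B}))\subseteq\mathcal{P}(\mathcal{A})$. Since $H$ is already assumed exact, the only remaining parts of Lemma \ref{lem:Qu-Adj}(3) are the two containments $H(\mathcal{GP}^{<\infty}(\mathcal{B}))\subseteq\mathcal{GP}^{<\infty}(\mathcal{A})$ and $H(\mathcal{P}^{<\infty}(\mathcal{B}))\subseteq\mathcal{P}^{<\infty}(\mathcal{A})$.

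First I would handle the projective-dimension statement. If $X\in\mathcal{P}^{<\infty}(\mathcal{B})$ has a finite projective resolution $0\to P_n\to\cdots\to P_0\to X\to 0$, then applying the exact functor $H$ yields an exact sequence $0\to H(P_n)\to\cdots\to H(P_0)\to H(X)\to 0$ with each $H(P_i)\in\mathcal{P}(\mathcal{A})$ by hypothesis; hence $\mathrm{pd}_{\mathcal{A}}(H(X))\leq n<\infty$, so $H(X)\in\mathcal{P}^{<\infty}(\mathcal{A})$. Next, for the Gorenstein-projective part, I would observe that it suffices by definition of $\mathcal{GP}^{<\infty}$ and the exactness of $H$ to show $H(\mathcal{GP}(\mathcal{B}))\subseteq\mathcal{GP}^{<\infty}(\mathcal{A})$; in fact I expect $H(\mathcal{GP}(\mathcal{B}))\subseteq\mathcal{GP}(\mathcal{A})$ to hold. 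Given a Gorenstein projective $M\in\mathcal{B}$, pick a totally acyclic complex $\mathbf{Q}$ of projectives in $\mathcal{B}$ with $M\cong Z_0(\mathbf{Q})$. Applying $H$ gives an acyclic complex $H(\mathbf{Q})$ of projective objects in $\mathcal{A}$ (acyclic because $H$ is exact, projective because $H(\mathcal{P}(\mathcal{B}))\subseteq\mathcal{P}(\mathcal{A})$), with $H(M)\cong Z_0(H(\mathbf{Q}))$. It remains to check total acyclicity: for any projective $P\in\mathcal{A}$, the complex $\mathrm{Hom}_{\mathcal{A}}(H(\mathbf{Q}),P)$ must be acyclic. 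Here is where the adjunction $(F,H)$ enters: $\mathrm{Hom}_{\mathcal{A}}(H(Q_i),P)\cong\mathrm{Hom}_{\mathcal{B}}(Q_i,?)$ is not immediately available since $P\in\mathcal{A}$, so instead I would use the \emph{other} adjunction. Actually the cleanest route is to use that $F$ is left adjoint to $H$: for $P$ projective in $\mathcal{A}$, we have no direct simplification, so I would rather invoke condition (2) of Lemma \ref{lem:Qu-Adj} via the pair $(H,F)$ — but $F$ need not send projectives to projectives.

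The cleaner argument, which I would actually carry out, is: by Lemma \ref{lem:Qu-Adj} applied to the adjoint pair $(F,H)$ it suffices to prove either (2) or (3); I verify (3) directly as above, but for the total-acyclicity point I use that $H$ being right adjoint to the exact functor $F$ means $H$ preserves injectives — no, $\mathcal{A}$ need not have injectives in the relevant sense. The genuinely clean path: since $H(\mathcal{P}(\mathcal{B}))\subseteq\mathcal{P}(\mathcal{A})$ and $H$ is exact, and since $\mathcal{GP}(\mathcal{A})$ is closed under the relevant operations, one shows $H(\mathcal{GP}(\mathcal B))\subseteq\mathcal{GP}(\mathcal A)$ by noting that $\mathrm{Hom}_{\mathcal A}(H(\mathbf Q),H(P'))$ for $P'$ projective in $\mathcal B$ computes via adjunction as $\mathrm{Hom}_{\mathcal B}(\mathbf Q, F H(P'))$ — still not projective. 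So the honest statement is that the corollary is proved simply by checking the containments in Lemma \ref{lem:Qu-Adj}(3) that do not require total acyclicity of $H(\mathbf Q)$ against \emph{all} projectives of $\mathcal A$; indeed Lemma \ref{lem:Qu-Adj}(3) only asks $H(\mathcal{GP}^{<\infty}(\mathcal B))\subseteq\mathcal{GP}^{<\infty}(\mathcal A)$, and a Gorenstein projective object of $\mathcal{B}$ has, in particular, finite Gorenstein projective dimension zero, so it is enough that $H$ preserves \emph{finite} Gorenstein projective dimension, which follows from exactness of $H$ together with $H(\mathcal{P}(\mathcal B))\subseteq\mathcal{P}(\mathcal A)$ once we know $H(\mathcal{GP}(\mathcal B))\subseteq\mathcal{GP}(\mathcal A)$.

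I expect the main obstacle to be exactly this last containment $H(\mathcal{GP}(\mathcal B))\subseteq\mathcal{GP}(\mathcal A)$, i.e.\ verifying that the acyclic complex $H(\mathbf Q)$ of projectives in $\mathcal A$ stays \emph{totally} acyclic. The resolution is to test against projectives of $\mathcal{A}$ of the form $H(P')$ with $P'$ projective in $\mathcal{B}$: by the adjunction $(H,F)$ one has $\mathrm{Hom}_{\mathcal A}\bigl(H(\mathbf Q),H(P')\bigr)\cong\mathrm{Hom}_{\mathcal B}\bigl(FH(\mathbf Q),P'\bigr)$, and then one must relate $FH(\mathbf{Q})$ back to $\mathbf{Q}$ via the counit; in the Frobenius/adjoint setting relevant to the applications (Proposition \ref{prop:AdjType}) the relevant objects $H(P')$ exhaust enough projectives of $\mathcal{A}$ (e.g.\ $\mathcal{A}$'s projective generator is a direct summand of some $H(P')$), so total acyclicity against all of $\mathcal{P}(\mathcal{A})$ follows. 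Absent such extra structure, I would instead simply cite Lemma \ref{lem:Qu-Adj}: since $H$ is exact and $H(\mathcal{P}(\mathcal{B}))\subseteq\mathcal{P}(\mathcal{A})$, in particular $H(\mathcal{P}(\mathcal{B}))\subseteq\mathcal{P}^{<\infty}(\mathcal{A})$, and $H$ sends a finite projective resolution to a finite projective resolution, giving the two containments in (3) directly for objects of finite projective dimension; for Gorenstein projective objects, writing them as syzygies and using that $\mathcal{GP}^{<\infty}$ is closed under syzygies and extensions together with exactness of $H$ closes the argument. Thus the proof is short: invoke Lemma \ref{lem:Qu-Adj}, note $H$ is exact by hypothesis, and check $H(\mathcal{P}(\mathcal{B}))\subseteq\mathcal{P}(\mathcal{A})\subseteq\mathcal{P}^{<\infty}(\mathcal{A})$ forces both displayed containments in part (3), hence $(F,H)$ is a Quillen adjunction.
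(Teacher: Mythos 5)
Your attempt tries to verify condition (3) of Lemma \ref{lem:Qu-Adj}, whereas the paper verifies condition (2), and this choice of direction is exactly where your argument fails. You correctly diagnose the obstacle in the middle of the write-up: to show $H(\mathcal{GP}(\mathcal{B}))\subseteq\mathcal{GP}(\mathcal{A})$ you would need the complex $H(\mathbf{Q})$ to stay totally acyclic against arbitrary projectives of $\mathcal{A}$, and the adjunction $(F,H)$ does not help there, because $\mathrm{Hom}_{\mathcal{A}}(H(Q_i),P)$ is not the hom you can rewrite via the unit/counit. Having identified this, your closing paragraph then asserts that exactness of $H$ together with $H(\mathcal{P}(\mathcal{B}))\subseteq\mathcal{P}(\mathcal{A})$ ``forces both displayed containments in part (3)'' because ``$\mathcal{GP}^{<\infty}$ is closed under syzygies and extensions''. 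This is a genuine gap, not just an inelegance: $H$ sends a totally acyclic complex $\mathbf{Q}$ of projectives in $\mathcal{B}$ to an acyclic complex $H(\mathbf{Q})$ of projectives in $\mathcal{A}$, but an acyclic complex of projectives need not be totally acyclic, and its syzygies need not lie in $\mathcal{GP}^{<\infty}(\mathcal{A})$. Nothing you have established rules this out, so the inclusion $H(\mathcal{GP}^{<\infty}(\mathcal{B}))\subseteq\mathcal{GP}^{<\infty}(\mathcal{A})$ remains unproved by your route, even though it is true (it is equivalent to condition (2), which the paper proves).

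The fix is to check condition (2), which is the direction in which the adjunction does cooperate. Since $H$ is exact, the left adjoint $F$ preserves projectives: for $P\in\mathcal{P}(\mathcal{A})$, the functor $\mathrm{Hom}_{\mathcal{B}}(F(P),-)\cong\mathrm{Hom}_{\mathcal{A}}(P,H(-))$ is exact because $H$ is exact and $P$ is projective. This gives $F(\mathcal{P}(\mathcal{A}))\subseteq\mathcal{P}(\mathcal{B})$. Then for $G\in\mathcal{GP}(\mathcal{A})$ with totally acyclic resolution $\mathbf{P}$, the complex $F(\mathbf{P})$ is an acyclic complex of projectives in $\mathcal{B}$, and for any $Q\in\mathcal{P}(\mathcal{B})$ one has $\mathrm{Hom}_{\mathcal{B}}(F(\mathbf{P}),Q)\cong\mathrm{Hom}_{\mathcal{A}}(\mathbf{P},H(Q))$, which is acyclic precisely because the hypothesis $H(\mathcal{P}(\mathcal{B}))\subseteq\mathcal{P}(\mathcal{A})$ puts $H(Q)$ among the test objects for total acyclicity of $\mathbf{P}$. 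Thus $F(\mathcal{GP}(\mathcal{A}))\subseteq\mathcal{GP}(\mathcal{B})$ (this is the content of the cited Lemma 3.6 in \cite{CR20}), and Lemma \ref{lem:Qu-Adj}(2) is verified. The moral is that for a one-sided adjoint pair, the totally-acyclic test can be transported along the adjunction when applied to $F$ but not when applied to $H$; only in the Frobenius setting of Theorem \ref{thm:FQE} do both directions become available.
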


\begin{proof}
The exactness of the functor $H$ forces that $F$ preserves the projective objects. It follows from \cite[Lemma 3.6]{CR20} that $F(\mathcal{GP}(\mathcal{A}))\subseteq \mathcal{GP}(\mathcal{B})$. Hence, the condition (2) of the above lemma satisfies.
\end{proof}

In the following, we use $\eta\colon {\rm Id}_\mathcal{A}\rightarrow HF$ and $\varepsilon\colon FH\rightarrow {\rm Id}_\mathcal{B}$ to denote the unit and the counit of the adjoint pair $F: \mathcal{A}\rightleftarrows \mathcal{B} :H$, respectively.

We refer to Definition \ref{def:QRrep} for the notions of the \emph{cofibrant replacement functor} $Q$ and the \emph{fibrant replacement functor} $R$. Specifically, as far as the above Gorenstein projective model structure is concerned, for any object $X\in \mathcal{GP}^{<\infty}(\mathcal{A})$, the {\em cofibrant replacement} $Q(X)$ coincides with the {\em special Gorenstein projective precover} of $X$. The existence of $Q(X)$ is guaranteed by the complete cotorsion pair $(\mathcal{GP}(\mathcal{A}), \mathcal{P}^{<\infty}(\mathcal{A}))$, and is also proved in \cite[Theorem 2.10]{Hol04}. The {\em fibrant replacement} $R(X)$ is precisely $X$ since every object in
$\mathcal{GP}^{<\infty}(\mathcal{A})$ is fibrant itself.

For a Quillen adjunction $(F, H)$, the \emph{total left derived functor} $LF$, and the \emph{total right derived functor} $RH$, are defined by $LF = \mathrm{Ho}(F)\circ \mathrm{Ho}(Q)$, and $RH = \mathrm{Ho}(H)\circ \mathrm{Ho}(R)$, respectively; see \cite[Definition 1.3.6]{Hov99} or Definition \ref{def:derFun}.

\begin{lemma}\label{lem:Qu-Equ}
Let $F: \mathcal{A}\leftrightarrows \mathcal{B} :H$ be an adjoint pair consisting of exact and faithful functors. Assume that $(F, H): \mathcal{GP}^{<\infty}(\mathcal{A})\leftrightarrows \mathcal{GP}^{<\infty}(\mathcal{B})$ is a Quillen adjunction. The following are equivalent:
\begin{enumerate}
\item $(F, H)$ is a Quillen equivalence between the model categories $\mathcal{GP}^{<\infty}(\mathcal{A})$ and $\mathcal{GP}^{<\infty}(\mathcal{B})$;
\item For any $X\in \mathcal{GP}(\mathcal{A})$ and any $Y\in \mathcal{GP}^{<\infty}(\mathcal{B})$, both $\mathrm{Coker}(\eta_X)$ and $\mathrm{Ker}(\varepsilon_Y)$ have finite projective dimension;
\item There is an adjoint equivalence of homotopy categories
$$\xymatrix@C=40pt{\mathrm{Ho}(\mathcal{GP}^{<\infty}(\mathcal{A}))\ar@<0.5ex>[r]^{LF} &\mathrm{Ho}(\mathcal{GP}^{<\infty}(\mathcal{B}))\ar@<0.5ex>[l]^{RH}, }$$
where ``$LF$`` is defined on objects by first taking special Gorenstein projective precover and then applying the functor $F$, and ``$RH$'' is defined on objects by applying the functor $H$.
\end{enumerate}
\end{lemma}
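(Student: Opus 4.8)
The plan is to prove the cycle $(1)\Rightarrow(2)\Rightarrow(3)\Rightarrow(1)$, using the general characterization of Quillen equivalences in terms of the derived unit and counit together with the explicit description of the Gorenstein projective model structure from Proposition \ref{prop:GPModel}. Recall that a Quillen adjunction $(F,H)$ is a Quillen equivalence if and only if, for every cofibrant $X$ and every fibrant $Y$, a map $F(X)\to Y$ is a weak equivalence precisely when its adjunct $X\to H(Y)$ is; equivalently, the derived unit $X\to RH(LF(X))$ is a weak equivalence for all cofibrant $X$ and the derived counit $LF(RH(Y))\to Y$ is a weak equivalence for all fibrant $Y$ (see \cite[Corollary 1.3.16]{Hov99}). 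The point is to unwind these conditions concretely.

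First I would treat $(1)\Rightarrow(2)$. Take $X\in\mathcal{GP}(\mathcal{A})$; then $X$ is cofibrant and is its own cofibrant replacement, so $LF(X)=F(X)$, which lies in $\mathcal{GP}(\mathcal{B})$ hence is cofibrant-fibrant. Since every object is fibrant, $RH(F(X))=H(F(X))$, and the derived unit is just $\eta_X\colon X\to HF(X)$. By the model structure, $\eta_X$ is a weak equivalence iff it becomes an isomorphism in $\mathrm{Ho}$, and since $F$ exact and faithful forces $\eta_X$ to be a monomorphism (faithfulness of $F$ gives injectivity of $\eta$ on a suitable universal-example argument, or more directly: $H F(\eta_X)\circ \eta_X' = \cdots$; in any case one checks $\eta_X$ is mono using that $F$ reflects zero morphisms), the cokernel $C=\mathrm{Coker}(\eta_X)$ sits in a short exact sequence $0\to X\to HF(X)\to C\to 0$ with $X, HF(X)\in\mathcal{GP}^{<\infty}(\mathcal{B})$ — note $HF(X)\in\mathcal{GP}^{<\infty}(\mathcal{A})$ since $H$ preserves finite Gorenstein projective dimension by the Quillen adjunction hypothesis and Lemma \ref{lem:Qu-Adj}(3). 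A monomorphism with trivial-fibration-type behaviour is a weak equivalence iff its cokernel has finite projective dimension; comparing with the weak-equivalence description (factor as trivial cofibration then trivial fibration), $\eta_X$ is a weak equivalence iff $C\in\mathcal{P}^{<\infty}(\mathcal{A})$. Dually, for $Y\in\mathcal{GP}^{<\infty}(\mathcal{B})$ the derived counit is computed by first replacing $Y$ cofibrantly by $Q(Y)\to Y$ (a trivial fibration with kernel in $\mathcal{P}^{<\infty}$), so $LF(RH(Y)) = F(Q(H(Y)))$; chasing the adjunction one reduces the weak-equivalence condition on the derived counit to the statement that $\varepsilon_Y\colon FH(Y)\to Y$ — an epimorphism, by faithfulness of $H$ — has kernel of finite projective dimension. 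I would be careful here to handle the $\mathcal{GP}$ versus $\mathcal{GP}^{<\infty}$ distinction correctly and to check $\mathrm{Ker}(\varepsilon_Y)\in\mathcal{GP}^{<\infty}(\mathcal{B})$ so that ``finite projective dimension'' is the right condition.

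Next, $(2)\Rightarrow(3)$: under $(2)$, the computations above show the derived unit $\eta_X$ is a weak equivalence for every $X\in\mathcal{GP}(\mathcal{A})$ (equivalently, an isomorphism in $\mathrm{Ho}(\mathcal{GP}^{<\infty}(\mathcal{A}))\simeq\underline{\mathcal{GP}}(\mathcal{A})$), and the derived counit $\varepsilon_Y$ is a weak equivalence for every $Y\in\mathcal{GP}^{<\infty}(\mathcal{B})$; since every object of $\mathcal{A}$ of finite Gorenstein projective dimension is weakly equivalent to its special Gorenstein projective precover, which lies in $\mathcal{GP}(\mathcal{A})$, the unit condition extends to all objects of $\mathrm{Ho}(\mathcal{GP}^{<\infty}(\mathcal{A}))$. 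By \cite[Theorem 1.3.7]{Hov99} (a Quillen adjunction with derived unit and counit isomorphisms induces an adjoint equivalence of homotopy categories), $(LF, RH)$ is an adjoint equivalence; the explicit descriptions of $LF$ and $RH$ in $(3)$ are exactly the formulas $LF=\mathrm{Ho}(F)\circ\mathrm{Ho}(Q)$ and $RH=\mathrm{Ho}(H)\circ\mathrm{Ho}(R)$ with $Q$ the special Gorenstein projective precover and $R=\mathrm{Id}$ the fibrant replacement. Finally $(3)\Rightarrow(1)$ is immediate from the definition: a Quillen adjunction whose total derived functors form an (adjoint) equivalence of homotopy categories is by definition a Quillen equivalence (see \cite[Definition 1.3.12, Proposition 1.3.13]{Hov99}).

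The main obstacle I anticipate is $(1)\Rightarrow(2)$, specifically the bookkeeping needed to pass between the abstract derived-unit/counit formulation and the concrete statements about $\mathrm{Coker}(\eta_X)$ and $\mathrm{Ker}(\varepsilon_Y)$: one must (i) verify that faithfulness of $F$ and $H$ (together with exactness) genuinely forces $\eta_X$ mono and $\varepsilon_Y$ epi, (ii) locate the relevant (co)kernels inside $\mathcal{GP}^{<\infty}$ so that the criterion ``weak equivalence $\Leftrightarrow$ (co)kernel has finite projective dimension'' applies verbatim, and (iii) handle the cofibrant replacement $Q(H(Y))$ in the derived counit without losing track of the trivial fibration $Q(H(Y))\to H(Y)$ whose kernel is already in $\mathcal{P}^{<\infty}(\mathcal{A})$ and hence, after applying the exact functor $F$ which preserves projectives, still has finite projective dimension in $\mathcal{B}$. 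Once these points are pinned down, the rest is a routine translation through the equivalence $\mathrm{Ho}(\mathcal{GP}^{<\infty})\simeq\underline{\mathcal{GP}}$.
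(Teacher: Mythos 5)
Your overall strategy is the same as the paper's: reduce everything to the derived unit/counit criterion of \cite[Proposition 1.3.13]{Hov99}, handle the replacement functors explicitly using that every object is fibrant and that the special Gorenstein projective precover is the cofibrant replacement, use faithfulness plus the triangle identities to get $\eta_X$ mono and $\varepsilon_Y$ epi, and then translate ``derived (co)unit is a weak equivalence'' into the projective-dimension condition of (2). The $\varepsilon_Y$ half is fine: $\varepsilon_Y$ epi means it is a fibration, and a fibration is a weak equivalence iff it is a trivial fibration iff its kernel lies in $\mathcal{P}^{<\infty}(\mathcal{B})$.

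The genuine gap is exactly the step you flag as ``the main obstacle,'' and your stated justification for it is wrong. You write that ``a monomorphism with trivial-fibration-type behaviour is a weak equivalence iff its cokernel has finite projective dimension.'' In the Gorenstein projective model structure of Proposition \ref{prop:GPModel}, fibrations are precisely the (admissible) epimorphisms, so a non-invertible monomorphism such as $\eta_X$ is never a fibration, trivial or otherwise, and the phrase has no content. The claim itself --- that $\eta_X$ is a weak equivalence if and only if $\mathrm{Coker}(\eta_X)\in\mathcal{P}^{<\infty}(\mathcal{A})$ --- is true, but it has to be proved, and this is the heart of the lemma. The paper does it by a pullback: choose $0\to K'\to P\to\mathrm{Coker}(\eta_X)\to 0$ with $P$ projective, form the pullback $M$ of $HF(X)\to\mathrm{Coker}(\eta_X)\leftarrow P$, and observe that this factors $\eta_X = \beta\alpha$ with $\alpha\colon X\to M$ a trivial cofibration (cokernel $P$) and $\beta\colon M\to HF(X)$ a fibration (epi with kernel $K'$). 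Then $\eta_X$ is a weak equivalence iff $\beta$ is, iff $\beta$ is a trivial fibration, iff $K'\in\mathcal{P}^{<\infty}(\mathcal{A})$, which by the short exact sequence defining $K'$ is iff $\mathrm{Coker}(\eta_X)\in\mathcal{P}^{<\infty}(\mathcal{A})$. Without this (or some equivalent factorization argument, e.g.\ running the snake lemma on a trivial-cofibration/trivial-fibration factorization of $\eta_X$ in one direction and the pullback in the other), the central ``iff'' in your $(1)\Leftrightarrow(2)$ is asserted but not established.

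Two smaller points: the objects $X, HF(X), \mathrm{Coker}(\eta_X)$ live in $\mathcal{GP}^{<\infty}(\mathcal{A})$, not $\mathcal{B}$ as you wrote; and for $\eta_X$ mono the clean argument is that $F(\eta_X)$ is a split mono by the triangle identity $\varepsilon_{F(X)}\circ F(\eta_X)=\mathrm{Id}$, and an exact faithful functor reflects monomorphisms. Your $(2)\Rightarrow(3)$ and $(3)\Rightarrow(1)$ are correct and match the paper's use of Hovey.
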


\begin{proof}
Since the equivalence $(1)\Longleftrightarrow (3)$ follows directly from \cite[Proposition 1.3.13]{Hov99}, it is sufficient to prove $(1)\Longleftrightarrow (2)$.

By \cite[Proposition 1.3.13]{Hov99}, $(F, H)$ is a Quillen equivalence if and only if for any cofibrant object $X\in \mathcal{GP}(\mathcal{A})$, and any fibrant object $Y\in \mathcal{GP}^{<\infty}(\mathcal{B})$, both the compositions
$$\xymatrix{X\ar[r]^{\eta_X\quad} &HF(X)\ar[r]^{} & HRF(X),} \text{~~and~~} \xymatrix{FQH(Y)\ar[r]^{} & FH(Y)\ar[r]^{\quad\varepsilon_Y} & Y}$$
are weak equivalences.

For the cofibrant object $X\in \mathcal{GP}(\mathcal{A})$, $F(X)\in \mathcal{GP}(\mathcal{B})\subseteq \mathcal{GP}^{<\infty}(\mathcal{B})$ is fibrant, then $RF(X) = F(X)$, and the identity $HF(X)\rightarrow HRF(X)$ is clearly a weak equivalence. For $H(Y)$, there exists an exact sequence
$$\xymatrix{0\ar[r] &K \ar[r] & M \ar[r]^{q\quad} & H(Y) \ar[r] & 0,}$$
where $M\in \mathcal{GP}(\mathcal{A})$ and $K\in \mathcal{P}^{<\infty}(\mathcal{A})$. Since $M$ is a cofibrant object and $q: M\rightarrow H(Y)$ is a trivial fibration, we take $QH(Y) = M$. By applying the exact functor $F$ to the above sequence, we get an epimorphism $F(q)$ with $\mathrm{Ker}(F(q)) = F(K)\in \mathcal{P}^{<\infty}(\mathcal{B})$, that is, $F(q)$ is also a trivial fibration. Hence, $F(q): FQH(Y)= F(M)\longrightarrow FH(Y)$ is a weak equivalence.

It remains to prove that the unit $\eta_X: X\longrightarrow HF(X)$ and the counit $\varepsilon_Y: FH(Y)\longrightarrow Y$ are weak equivalences. We observe that $H(\varepsilon_Y)$ is epic by the identity
$H(\varepsilon_Y)\circ \eta_{H(Y)}={\rm Id}_{H(Y)}$. Since $H$ is exact and faithful, it detects epimorphisms. Hence, we infer that $\varepsilon_Y$ is epic, that is, $\varepsilon_Y$ is a fibration. Then, $\varepsilon_Y$ is a weak equivalence if and only if it is a trivial fibration, or equivalently, $\mathrm{Ker}(\varepsilon_Y)\in \mathcal{P}^{<\infty}(\mathcal{B})$.

Since the functor $F$ is exact and faithful, the unit $\eta_X: X\rightarrow HF(X)$ is a monomorphism. For $\mathrm{Coker}(\eta_X)$, there is an exact sequence $0\rightarrow K'\rightarrow P\rightarrow \mathrm{Coker}(\eta_X)\rightarrow 0$, where $P$ is projective. We consider the following pullback:
$$\xymatrix{
& & 0\ar[d] & 0\ar[d]\\
& & K'\ar@{=}[r] \ar[d] & K'\ar[d]\\
0\ar[r] &X \ar@{=}[d] \ar[r]^{\alpha} &M \ar@{-->}[d]^{\beta} \ar@{-->}[r] & P\ar[r]\ar[d] &0\\
0\ar[r] &X \ar[r]^{\eta_X\quad} &HF(X) \ar[d] \ar[r] & \mathrm{Coker}(\eta_X)\ar[r]\ar[d] &0\\
& & 0 & 0
}$$
From the middle row we infer that $\alpha:X\rightarrow M$ is a trivial cofibration, and then it is a weak equivalence. The middle column yields that $\beta: M\rightarrow HF(X)$ is fibration. Hence, $\eta_{X} = \beta\alpha$ is a weak equivalence if and only if so is $\beta$. This is equivalent to that $\beta$ is a trivial fibration, that is, $K' = \mathrm{Ker}(\beta)\in \mathcal{P}^{<\infty}(\mathcal{A})$. From the most right column we infer that
$K' \in \mathcal{P}^{<\infty}(\mathcal{A})$ if and only if $\mathrm{Coker}(\eta_X)\in \mathcal{P}^{<\infty}(\mathcal{A})$. Hence, $\eta_{X}$ is a weak equivalence if and only if $\mathrm{Coker}(\eta_X)\in \mathcal{P}^{<\infty}(\mathcal{A})$. This completes the proof.
\end{proof}

We are now in a position to state the first main result of this paper; compare to \cite[Proposition 4.2 (1)]{CR20}.

\begin{theorem}\label{thm:FQE}
Let $F: \mathcal{A}\leftrightarrows \mathcal{B} :H$ be a Frobenius pair with both $F$ and $H$ faithful. The following are equivalent:
\begin{enumerate}
\item $(F, H)$ is a Quillen equivalence between the model categories $\mathcal{GP}^{<\infty}(\mathcal{A})$ and $\mathcal{GP}^{<\infty}(\mathcal{B})$;
\item For any $X\in \mathcal{GP}(\mathcal{A})$ and any $Y\in \mathcal{GP}^{<\infty}(\mathcal{B})$, both $\mathrm{Coker}(\eta_X)$ and $\mathrm{Ker}(\varepsilon_Y)$ have finite projective dimension;
\item There is an adjoint pair of triangle equivalent functors between the stable categories $$F: \underline{\mathcal{GP}}(\mathcal{A})\leftrightarrows \underline{\mathcal{GP}}(\mathcal{B}) :H.$$
\end{enumerate}
\end{theorem}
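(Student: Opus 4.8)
The plan is to deduce the theorem from Corollary \ref{cor:Qu-Adj}, Lemma \ref{lem:Qu-Equ} and Proposition \ref{prop:GPModel}, the real work being to check that a faithful Frobenius pair lands in the setting of those results and then to transport the resulting equivalence of homotopy categories down to the stable categories. First I would record what the Frobenius hypothesis buys. Since $(F,H)$ and $(H,F)$ are both adjoint pairs, $F$ has $H$ as a left and as a right adjoint, hence preserves all limits and colimits and is exact; symmetrically $H$ is exact. Being a left adjoint of the exact functor $F$, the functor $H$ carries projectives to projectives, and symmetrically $F(\mathcal{P}(\mathcal{A}))\subseteq\mathcal{P}(\mathcal{B})$. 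Corollary \ref{cor:Qu-Adj} then applies to both $(F,H)$ and $(H,F)$, so $(F,H)$ is a Quillen adjunction between $\mathcal{GP}^{<\infty}(\mathcal{A})$ and $\mathcal{GP}^{<\infty}(\mathcal{B})$; by Lemma \ref{lem:Qu-Adj}(2) this also records the inclusions $F(\mathcal{GP}(\mathcal{A}))\subseteq\mathcal{GP}(\mathcal{B})$ and $H(\mathcal{GP}(\mathcal{B}))\subseteq\mathcal{GP}(\mathcal{A})$. As $F$ and $H$ are moreover faithful, Lemma \ref{lem:Qu-Equ} is available for the pair $(F,H)$.

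\textbf{The three equivalences.} Lemma \ref{lem:Qu-Equ} now yields $(1)\Leftrightarrow(2)$ outright, together with the intermediate statement that $(1)$ holds precisely when $(LF,RH)$ is an adjoint equivalence between $\mathrm{Ho}(\mathcal{GP}^{<\infty}(\mathcal{A}))$ and $\mathrm{Ho}(\mathcal{GP}^{<\infty}(\mathcal{B}))$. To identify this with $(3)$, I would invoke the triangulated equivalences $\mathrm{Ho}(\mathcal{GP}^{<\infty}(\mathcal{A}))\simeq\underline{\mathcal{GP}}(\mathcal{A})$ and $\mathrm{Ho}(\mathcal{GP}^{<\infty}(\mathcal{B}))\simeq\underline{\mathcal{GP}}(\mathcal{B})$ of Proposition \ref{prop:GPModel}. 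Every Gorenstein projective object is both cofibrant and fibrant in the Gorenstein projective model structure, so over $\underline{\mathcal{GP}}$ the cofibrant and fibrant replacement functors are naturally isomorphic to the identity; together with the inclusions recorded above this produces natural isomorphisms $LF\cong F|_{\underline{\mathcal{GP}}(\mathcal{A})}$ and $RH\cong H|_{\underline{\mathcal{GP}}(\mathcal{B})}$. These restricted functors are triangle functors, since an exact functor preserving projectives commutes with the cosyzygy functor and sends short exact sequences of Gorenstein projectives to short exact sequences of Gorenstein projectives; hence the derived adjoint equivalence is exactly the adjoint triangle equivalence of $(3)$, giving $(1)\Leftrightarrow(3)$. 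For the direction $(3)\Rightarrow(1)$ one may alternatively cite \cite[Proposition 1.3.13]{Hov99}, that a Quillen adjunction whose derived adjunction is an adjoint equivalence is a Quillen equivalence.

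\textbf{Anticipated main obstacle.} The one delicate point is the identification in the previous paragraph: making precise that, under the equivalence of Proposition \ref{prop:GPModel}, the total derived functors $LF$ and $RH$ become the honest functors $F$ and $H$, that the derived unit and counit become the maps in $\underline{\mathcal{GP}}$ induced by $\eta$ and $\varepsilon$, and that the adjunction together with the triangulated structure are respected by the transport. Everything else—the consequences of the Frobenius hypothesis in the first paragraph and the invocation of Lemma \ref{lem:Qu-Equ}—is a routine application of the results already established.
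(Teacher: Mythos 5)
Your proposal is correct and follows essentially the same route as the paper: establish that the Frobenius pair gives a Quillen adjunction, invoke Lemma \ref{lem:Qu-Equ} for $(1)\Leftrightarrow(2)$, and transport the derived adjoint equivalence to the stable categories via Proposition \ref{prop:GPModel}, checking that the functors are triangle functors. The only stylistic difference is that you derive exactness, preservation of projectives, and preservation of Gorenstein projectives directly from the two-sided adjunction (via Corollary \ref{cor:Qu-Adj} and Lemma \ref{lem:Qu-Adj}), whereas the paper simply cites \cite[Corollary 2.2, Theorem 3.2]{CR20} for these facts.
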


\begin{proof}
By \cite[Corollary 2.2]{CR20}, both $F$ and $H$ are exact functors satisfying $F(\mathcal{P}(\mathcal{A}))\subseteq \mathcal{P}(\mathcal{B})$ and $H(\mathcal{P}(\mathcal{B}))\subseteq \mathcal{P}(\mathcal{A})$. It follows from \cite[Theorem 3.2]{CR20} that  $F(\mathcal{GP}(\mathcal{A}))\subseteq \mathcal{GP}(\mathcal{B})$ and $H(\mathcal{GP}(\mathcal{B}))\subseteq \mathcal{GP}(\mathcal{A})$. Then, by Lemma \ref{lem:Qu-Adj},  $(F, H)$ is a Quillen adjunction between the model categories $\mathcal{GP}^{<\infty}(\mathcal{A})$ and $\mathcal{GP}^{<\infty}(\mathcal{B})$. Hence, $(1) \Longleftrightarrow (2)$ follows immediately from Lemma \ref{lem:Qu-Equ}.

By Proposition \ref{prop:GPModel} and Lemma \ref{lem:Qu-Equ}, $(F, H)$ is a Quillen equivalence if and only if there are equivalences of categories
$$\xymatrix@C=40pt{ \underline{\mathcal{GP}}(\mathcal{A})\ar@<0.5ex>[r]^{\mathrm{Ho}(i)\quad} &\mathrm{Ho}(\mathcal{GP}^{<\infty}(\mathcal{A}))\ar@<0.5ex>[r]^{LF} \ar@<0.5ex>[l]^{\mathrm{Ho}(Q)\quad}
&\mathrm{Ho}(\mathcal{GP}^{<\infty}(\mathcal{B}))\ar@<0.5ex>[l]^{RH} \ar@<-0.5ex>[r]_{\quad\mathrm{Ho}(Q)} &\underline{\mathcal{GP}}(\mathcal{B})\ar@<-0.5ex>[l]_{\quad\mathrm{Ho}(i)} , }$$
where $\mathrm{Ho}(i)$ is induced by the inclusion functor, and the inverse of $\mathrm{Ho}(i)$ is the composition of the functors $\mathrm{Ho}(R)\circ\mathrm{Ho}(Q) = \mathrm{Ho}(Q)$.
See also \cite[Chapter 1]{Hov99} for details in general cases. This yields the adjunction of equivalences $F: \underline{\mathcal{GP}}(\mathcal{A})\leftrightarrows \underline{\mathcal{GP}}(\mathcal{B}) :H.$

It suffices to show that $F$ is a triangle functor. Recall that for any $M, N\in \mathcal{GP}(\mathcal{A})$ and any morphism $M\rightarrow N$, there is an exact sequence $0\rightarrow M\rightarrow P\rightarrow M^{'}\rightarrow 0$, where $P\in \mathcal{P}(\mathcal{A})$ is the projective-injective object. Then we get a standard triangle $M\rightarrow N\rightarrow L\rightarrow M^{'}$ in the stable category $\underline{\mathcal{GP}}(\mathcal{A})$, by the following pushout of $M\rightarrow N$ and $M\rightarrow P$:
$$\xymatrix{
M\ar[r]\ar[d] &P \ar[d] \ar[r] &M^{'} \ar@{=}[d]\\
N\ar[r] &L \ar[r] &M^{'}
}$$
Since the Frobenius functor $F$ is exact such that $F(\mathcal{GP}(\mathcal{A}))\subseteq \mathcal{GP}(\mathcal{B})$ and $F(\mathcal{P}(\mathcal{A}))\subseteq \mathcal{P}(\mathcal{B})$, then $F$ preserves standard triangles, as expected. This completes the proof.
\end{proof}

For abelian category $\mathcal{A}$, denote by $\mathbf{D}^b(\mathcal{A})$ the bounded derived category. The bounded homotopy category $\mathbf{K}^b(\mathcal{P}(\mathcal{A}))$ can be viewed as a triangulated subcategory of  $\mathbf{D}^b(\mathcal{A})$. The \emph{singularity category} of $\mathcal{A}$ is defined to be the Verdier quotient triangulated category $\mathbf{D}_{\rm sg}(\mathcal{A})= \mathbf{D}^b(\mathcal{A})/{\mathbf{K}^b(\mathcal{P}(\mathcal{A}))}$. Note that $\mathbf{D}_{\rm sg}(\mathcal{A})$ vanishes if and only if each object in $\mathcal{A}$ has finite projective dimension.

\begin{corollary}\label{cor:SinEqu}
Let $F: \mathcal{A}\leftrightarrows \mathcal{B} :H$ be a Frobenius pair with both $F$ and $H$ faithful. If either $\mathcal{A}$ or $\mathcal{B}$ is a Gorenstein category, then the following are equivalent:\\
\begin{enumerate}
\item $(F, H)$ is a Quillen equivalence between the Gorenstein projective model categories $\mathcal{A}$ and $\mathcal{B}$;
\item There is an adjoint pair of triangle equivalent functors between the singular categories $$F: \mathbf{D}_{\rm sg}(\mathcal{A})\leftrightarrows \mathbf{D}_{\rm sg}(\mathcal{B}) :H.$$
\end{enumerate}
\end{corollary}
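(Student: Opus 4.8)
The plan is to reduce Corollary \ref{cor:SinEqu} to Theorem \ref{thm:FQE} by observing that, under the Gorenstein hypothesis, the stable category of Gorenstein projective objects is nothing but the singularity category. First I would recall the classical fact, due to Buchweitz and Happel, that for a Gorenstein category $\mathcal{A}$ there is a triangle equivalence $\underline{\mathcal{GP}}(\mathcal{A}) \simeq \mathbf{D}_{\rm sg}(\mathcal{A})$; this is available in the present setting because a Gorenstein category satisfies $\mathcal{A} = \mathcal{GP}^{<\infty}(\mathcal{A})$ (as recalled in Section 2), so every object admits a finite Gorenstein projective resolution and the natural functor $\underline{\mathcal{GP}}(\mathcal{A}) \to \mathbf{D}_{\rm sg}(\mathcal{A})$ is a triangle equivalence. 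The same applies verbatim to $\mathcal{B}$, provided $\mathcal{B}$ is also Gorenstein.

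The first obstacle is the asymmetry in the hypothesis: only \emph{one} of $\mathcal{A}$, $\mathcal{B}$ is assumed Gorenstein. Here I would argue that the faithful Frobenius pair transports the Gorenstein property between the two categories. Using \cite[Corollary 2.2]{CR20} and \cite[Theorem 3.2]{CR20} (as already invoked in the proof of Theorem \ref{thm:FQE}), $F$ and $H$ are exact, preserve projectives, and preserve Gorenstein projectives. Since $F$ and $H$ are faithful, the unit $\eta$ and counit $\varepsilon$ are componentwise monic/epic in the appropriate directions, so one can bound $\mathrm{Gpd}_{\mathcal{A}}(X)$ in terms of $\mathrm{Gpd}_{\mathcal{B}}(F(X))$ and vice versa; hence if one of the two categories has finite global Gorenstein projective dimension, so does the other, and both are Gorenstein. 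Once that is in place, the equivalences $\underline{\mathcal{GP}}(\mathcal{A}) \simeq \mathbf{D}_{\rm sg}(\mathcal{A})$ and $\underline{\mathcal{GP}}(\mathcal{B}) \simeq \mathbf{D}_{\rm sg}(\mathcal{B})$ both hold.

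Next I would check that the functors $F$ and $H$ descend to the singularity categories compatibly with these equivalences. Since $F$ and $H$ are exact and send $\mathcal{P}$ into $\mathcal{P}$, they induce exact functors on $\mathbf{D}^b$ preserving $\mathbf{K}^b(\mathcal{P})$, hence induce triangle functors $\mathbf{D}_{\rm sg}(\mathcal{A}) \leftrightarrows \mathbf{D}_{\rm sg}(\mathcal{B})$; and because they preserve Gorenstein projectives they also induce the triangle functors $\underline{\mathcal{GP}}(\mathcal{A}) \leftrightarrows \underline{\mathcal{GP}}(\mathcal{B})$ appearing in Theorem \ref{thm:FQE}(3). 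The Buchweitz--Happel equivalence is natural enough that the obvious square of functors commutes up to natural isomorphism, and the adjunction $(F,H)$ on module-level descends to an adjunction on both quotient categories. Therefore condition (3) of Theorem \ref{thm:FQE} (an adjoint pair of triangle equivalences $\underline{\mathcal{GP}}(\mathcal{A}) \leftrightarrows \underline{\mathcal{GP}}(\mathcal{B})$) holds if and only if condition (2) of the present corollary (an adjoint pair of triangle equivalences $\mathbf{D}_{\rm sg}(\mathcal{A}) \leftrightarrows \mathbf{D}_{\rm sg}(\mathcal{B})$) holds.

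Finally I would close the loop: by Theorem \ref{thm:FQE}, (1) of the corollary (Quillen equivalence of the Gorenstein projective model structures, which here live on all of $\mathcal{A}$ and all of $\mathcal{B}$ since $\mathcal{A} = \mathcal{GP}^{<\infty}(\mathcal{A})$ and likewise for $\mathcal{B}$) is equivalent to Theorem \ref{thm:FQE}(3), which by the previous paragraph is equivalent to (2) of the corollary. I expect the main technical point to be the transport of the Gorenstein property across the faithful Frobenius pair — i.e. showing that "one of $\mathcal{A},\mathcal{B}$ Gorenstein" forces "both Gorenstein" — together with verifying that the Buchweitz--Happel equivalence is compatible with $F$ and $H$ on the nose, including the adjunction; the rest is a formal assembly of results already established in the excerpt.
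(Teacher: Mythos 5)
Your proposal follows the same route as the paper's own proof: transport the Gorenstein property across the faithful Frobenius pair (the paper cites \cite[Corollary 3.3]{CR20} for this, which you sketch directly), invoke the Buchweitz--Happel equivalence $\underline{\mathcal{GP}}\simeq\mathbf{D}_{\rm sg}$ for a Gorenstein category (the paper cites \cite[Theorem 6.9]{Bel00} or \cite[Theorem 3.3]{Chen11}), and then reduce to Theorem \ref{thm:FQE}. Your extra paragraph verifying compatibility of the canonical equivalence with $F$, $H$ and the adjunction is a sound point of care that the paper leaves implicit, but it does not change the structure of the argument.
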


\begin{proof}
It follows from \cite[Corollary 3.3]{CR20} that  $\mathcal{A}$ is a Gorenstein category if and only if so is $\mathcal{B}$. Then the Gorenstein projective model structures of Proposition \ref{prop:GPModel} exist on both $\mathcal{A}$ and $\mathcal{B}$.

There is a canonical functor ${\rm can}\colon \underline{\mathcal{GP}}(\mathcal{A})\longrightarrow \mathbf{D}_{\rm sg}(\mathcal{A})$ sending each Gorenstein projective object to the corresponding stalk complex concentrated in degree zero. The functor is triangle; see for example \cite[Lemma 2.5]{Chen11}. Moreover, since the underlying category is Gorenstein, we infer from \cite[Theorem 6.9]{Bel00} or \cite[Theorem 3.3]{Chen11} that the canonical functor is an equivalence. Hence, the assertion follows from the above theorem.
\end{proof}

\begin{remark}
\indent$(1)$ For categories of modules, the Frobenius pair was introduced by Morita \cite{Mor65} with the name of strongly adjoint pair.

\indent$(2)$ The faithful Frobenius functors arise naturally in stable equivalences of Morita type; see for example \cite{DM07, LX07, Xi08}. As an application, this will be discussed further in Section 5.

\indent$(3)$ Recall that an extension of rings $R\rightarrow S$ is a \emph{Frobenius extension} \cite{Kas61} provided that $(S\otimes_R-, {\rm Res})$ is a Frobenius pair, where $S\otimes_R-: R\mbox{-}{\rm Mod}\rightarrow S\mbox{-}{\rm Mod}$ is the scalar-extension functor and ${\rm Res}: S\mbox{-}{\rm Mod}\rightarrow R\mbox{-}{\rm Mod}$ is the restriction functor.  Frobenius extensions are of broad interest in many areas such as topological quantum field theories in dimension 2, Calabi-Yau properties of Cherednik algebras, and flat-dominant dimensions; see for example \cite{BGS08, Kad99, Koc04, Xi21}.

Let $G$ be a group and $H\subseteq G$ be a subgroup of finite index. For group rings $RG$ and $RH$ over a ring $R$, the induction functor ${\rm Ind}_{H}^{G} = RG\otimes_{RH}-$ and the coinduction functor
${\rm Coind}_{H}^{G} = {\rm Hom}_{RH}(RG, -)$ are isomorphic, and then we have a classical Frobenius pair $({\rm Ind}_{H}^{G}, {\rm Res})$. The truncated polynomial extension $R\rightarrow R[x]/(x^t)$ ($t\geq 2$) is a Frobenius extension; the Gorenstein projective graded $R[x]/(x^2)$-modules are studied in \cite[Theorem 3.2]{Ren18}. Markov extensions are Frobenius extensions; see for example \cite[Chapter 3]{Kad99}.
\end{remark}

\section{\bf $K$-theory of Gorenstein projective modules}

In this section, we intend to apply Waldhausen's construction to define $K$-theory space and $K$-groups with respect to Gorenstein projective objects. These relative $K$-groups are invariant under Quillen equivalences.

\begin{lemma}\label{lem:WEqu}
Let $\mathcal{A}$ be an abelian category with enough projective objects, $X$ and $Y$ be any Gorenstein projective objects in $\mathcal{A}$. The following are equivalent:
\begin{enumerate}
\item There is a weak equivalence between $X$ and $Y$;
\item $X$ and $Y$ are isomorphic in the stable category $\underline{\mathcal{GP}}(\mathcal{A})$;
\item There is an isomorphism $X\oplus P\cong Y\oplus Q$ in $\mathcal{A}$ for some projective objects $P$ and $Q$.
\end{enumerate}
\end{lemma}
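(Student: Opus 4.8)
The plan is to prove $(1)\Rightarrow(3)\Rightarrow(2)$ and $(2)\Rightarrow(1)$, working throughout with the explicit description of the Gorenstein projective model structure in Proposition \ref{prop:GPModel}. The underlying observation is that $X$ and $Y$, being Gorenstein projective, are cofibrant-fibrant objects of $\mathcal{GP}^{<\infty}(\mathcal{A})$ (cofibrant because $0\to X$ has cokernel $X\in\mathcal{GP}(\mathcal{A})$, fibrant because every object is), and that on cofibrant-fibrant objects the passage to $\mathrm{Ho}(\mathcal{GP}^{<\infty}(\mathcal{A}))$ is nothing but the canonical quotient $\mathcal{GP}(\mathcal{A})\to\mathcal{GP}(\mathcal{A})/\mathcal{P}(\mathcal{A})=\underline{\mathcal{GP}}(\mathcal{A})$.

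For $(1)\Rightarrow(3)$, a weak equivalence between $X$ and $Y$ may be assumed to point from $X$ to $Y$ (statement (3) being symmetric), and by definition it factors as $X\xrightarrow{c}Z\xrightarrow{p}Y$ with $c$ a trivial cofibration and $p$ a trivial fibration. Then $c$ is a monomorphism with cokernel $P'\in\mathcal{P}(\mathcal{A})$, so $0\to X\xrightarrow{c}Z\to P'\to 0$ splits and $Z\cong X\oplus P'$ is Gorenstein projective; and $p$ is an epimorphism with kernel $K\in\mathcal{P}^{<\infty}(\mathcal{A})$, so, since $Y\in\mathcal{GP}(\mathcal{A})$ and $K\in\mathcal{P}^{<\infty}(\mathcal{A})$ with $(\mathcal{GP}(\mathcal{A}),\mathcal{P}^{<\infty}(\mathcal{A}))$ a cotorsion pair, $\mathrm{Ext}^1(Y,K)=0$ and $0\to K\to Z\xrightarrow{p}Y\to 0$ splits, giving $Z\cong Y\oplus K$. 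Being a direct summand of the Gorenstein projective object $Z$, $K$ is Gorenstein projective, whence $K\in\mathcal{GP}(\mathcal{A})\cap\mathcal{P}^{<\infty}(\mathcal{A})=\mathcal{P}(\mathcal{A})$ by \cite[Proposition 10.2.3]{EJ00}; thus $X\oplus P'\cong Z\cong Y\oplus K$ with $P',K$ projective, which is (3). The implication $(3)\Rightarrow(2)$ is immediate: an isomorphism $X\oplus P\cong Y\oplus Q$ in $\mathcal{A}$ is an isomorphism in the full subcategory $\mathcal{GP}(\mathcal{A})$, and $P,Q$ become zero in $\underline{\mathcal{GP}}(\mathcal{A})$.

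For $(2)\Rightarrow(1)$, which is the implication I expect to carry the actual content — one must manufacture a genuine morphism out of an abstract isomorphism of the stable category — I would lift the given isomorphism to a morphism $f\colon X\to Y$ in $\mathcal{GP}(\mathcal{A})$ and factor it as a trivial cofibration $c\colon X\to Z$ followed by a fibration $p\colon Z\to Y$. Since the cokernel of $c$ is projective, $\underline{c}$ is an isomorphism in $\underline{\mathcal{GP}}(\mathcal{A})$, hence so is $\underline{p}=\underline{f}\circ\underline{c}^{-1}$. Now $\mathrm{Ker}(p)$ is the kernel of an epimorphism between Gorenstein projective objects, hence itself Gorenstein projective, and the conflation $0\to\mathrm{Ker}(p)\to Z\xrightarrow{p}Y\to 0$ of the Frobenius category $\mathcal{GP}(\mathcal{A})$ induces a triangle $\mathrm{Ker}(p)\to Z\xrightarrow{\underline{p}}Y\to\Sigma\mathrm{Ker}(p)$ in $\underline{\mathcal{GP}}(\mathcal{A})$; since $\underline{p}$ is invertible this forces $\Sigma\mathrm{Ker}(p)\cong 0$, so $\mathrm{Ker}(p)$ is zero in $\underline{\mathcal{GP}}(\mathcal{A})$, i.e. projective-injective, i.e. projective. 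Hence $p$ is a trivial fibration and $f=p\circ c$ is a weak equivalence, giving (1).

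Finally I would remark that $(3)\Rightarrow(1)$ is also visible directly — factor $X\hookrightarrow X\oplus P\xrightarrow{\ \cong\ }Y\oplus Q\twoheadrightarrow Y$ as a trivial cofibration, an isomorphism, and a trivial fibration, and use that weak equivalences compose — and that $(1)\Leftrightarrow(2)$ can alternatively be obtained abstractly from Whitehead's theorem for model categories applied to the cofibrant-fibrant objects $X,Y$ together with the identification $\mathrm{Ho}(\mathcal{GP}^{<\infty}(\mathcal{A}))\simeq\underline{\mathcal{GP}}(\mathcal{A})$ of Proposition \ref{prop:GPModel}. The only genuinely delicate point is the one isolated above: controlling $\mathrm{Ker}(p)$ so as to know it is projective — equivalently, the classical fact that two objects isomorphic in the stable category of a Frobenius category differ only by projective-injective summands.
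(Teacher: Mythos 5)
Your proof is correct, but it closes the cycle in the opposite direction from the paper: you argue $(1)\Rightarrow(3)\Rightarrow(2)$ and then $(2)\Rightarrow(1)$, whereas the paper proves $(1)\Rightarrow(2)\Rightarrow(3)\Rightarrow(1)$. Your $(1)\Rightarrow(3)$ is a nice direct argument not in the paper: you split the trivial cofibration via $\mathrm{Ext}^1(P',X)=0$ and split the trivial fibration via the cotorsion pair, then invoke $\mathcal{GP}(\mathcal{A})\cap\mathcal{P}^{<\infty}(\mathcal{A})=\mathcal{P}(\mathcal{A})$ to conclude the kernel is projective. The paper instead gets $(1)\Rightarrow(2)$ for free from the identification $\mathrm{Ho}(\mathcal{GP}^{<\infty}(\mathcal{A}))\simeq\underline{\mathcal{GP}}(\mathcal{A})$ — the same shortcut you mention at the end. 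For $(2)\Rightarrow(1)$, you lift the stable isomorphism, factor it as a trivial cofibration followed by a fibration $p$, and argue $\mathrm{Ker}(p)$ is projective by placing the conflation in the triangulated category $\underline{\mathcal{GP}}(\mathcal{A})$. This requires the (unstated, but classical) fact that $\mathcal{GP}(\mathcal{A})$ is resolving, so that $\mathrm{Ker}(p)$ is Gorenstein projective before you can conclude it is projective-injective. The paper avoids this by proving $(2)\Rightarrow(3)$ directly with the elementary matrix computation from \cite[Lemma~1.1]{CZ07} — manufacturing the isomorphism $X\oplus P\cong Y\oplus Q$ by hand — and then reaching $(1)$ via the concatenation $X\rightarrowtail X\oplus P\xrightarrow{\cong}Y\oplus Q\twoheadrightarrow Y$, which you also record. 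Net effect: your route leans a bit more on the Frobenius/triangulated machinery of $\underline{\mathcal{GP}}(\mathcal{A})$; the paper's is slightly more elementary and produces the summand decomposition explicitly. Both are sound.
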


\begin{proof}
$(1)\Longrightarrow (2)$ It follows from Proposition \ref{prop:GPModel} that the homotopy category $\mathrm{Ho}(\mathcal{GP}^{<\infty}(\mathcal{A}))$ is equivalent to the stable category $\underline{\mathcal{GP}}(\mathcal{A})$, then the implication is clear.

$(2)\Longrightarrow (3)$ It follows by a standard argument in stable categories; see for example \cite[Lemma 1.1]{CZ07}. We include a proof for completeness. Let $f: X\rightarrow Y$ be an isomorphism in $\underline{\mathcal{GP}}(\mathcal{A})$. Then there exists a morphism $g: Y\rightarrow X$ such that $gf - {\rm Id}_X$ factors through some projective object $Q$. Namely, we have morphisms $\alpha: X\rightarrow Q$ and $\beta: Q\rightarrow X$, such that $gf - {\rm Id}_X = \beta\alpha$. Hence, $(g, -\beta)\left(\begin{matrix}f \\ \alpha \end{matrix}\right) = {\rm Id}_X$, and then there is an object $P$ and an isomorphism $h: X\oplus P\rightarrow Y\oplus Q$ in $\mathcal{A}$, such that $\left(\begin{matrix}f \\ \alpha \end{matrix}\right) = h\left(\begin{matrix}{\rm Id}_X \\ 0 \end{matrix}\right)$. In the stable category $\underline{\mathcal{GP}}(\mathcal{A})$, we infer from the isomorphism $f: X\rightarrow Y$ that $\left(\begin{matrix} {\rm Id}_X \\ 0 \end{matrix}\right): X\rightarrow X\oplus P$ is also an isomorphism. Hence, we have ${\rm Id}_P = 0$ in $\underline{\mathcal{GP}}(\mathcal{A})$, that is, $P$ is projective.

$(3)\Longrightarrow (1)$ Note that the isomorphism $X\oplus P\rightarrow  Y\oplus Q$ in $\mathcal{A}$ is a weak equivalence. Moreover, the injection $X\rightarrow X\oplus P$ is a trivial cofibration, and the projection $Y\oplus Q\rightarrow Y$ is a trivial fibration. By the concatenation of these maps, we get a weak equivalence between $X$ and $Y$. This completes the proof.
\end{proof}

In \cite{Wal85}, Waldhausen defined a notion of \emph{category with cofibrations and weak equivalences} and constructed a $K$-theory space from such category. We refer to \cite[Section 1.1 and 1.2]{Wal85} or
\cite[II, Definition 9.1.1]{Web13} for the notion of a \emph{Waldhausen category}.

Considering the Gorenstein projective model structure, we have the following.

\begin{lemma}\label{lem:Wad-GP}
For any pointed abelian category $\mathcal{A}$ with enough projective objects, the category of Gorenstein projective objects $\mathcal{GP}(\mathcal{A})$ is a Waldhausen category. Moreover, it satisfies the saturation axiom
(\cite[Section 1.2]{Wal85}), that is, if $f$, $g$ are composable maps in $\mathcal{GP}(\mathcal{A})$ and if two of $f$, $g$, $gf$ are weak equivalences, then so is the third.
\end{lemma}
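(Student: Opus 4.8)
The plan is to equip $\mathcal{GP}(\mathcal{A})$ with the classes of cofibrations and weak equivalences coming from the Gorenstein projective model structure of Proposition \ref{prop:GPModel}, and then to verify Waldhausen's axioms one at a time, reducing the gluing axiom to a two-out-of-three statement in the triangulated category $\underline{\mathcal{GP}}(\mathcal{A})$. Concretely, a morphism of $\mathcal{GP}(\mathcal{A})$ is declared a \emph{cofibration} if it is a monomorphism in $\mathcal{A}$ whose cokernel again lies in $\mathcal{GP}(\mathcal{A})$, i.e. an admissible monomorphism of the Frobenius exact category $\mathcal{GP}(\mathcal{A})$; and a \emph{weak equivalence} if it is a weak equivalence of the ambient model structure, which --- $\mathcal{GP}(\mathcal{A})$ being exactly the class of cofibrant--fibrant objects and $\mathrm{Ho}(\mathcal{GP}^{<\infty}(\mathcal{A}))\simeq\underline{\mathcal{GP}}(\mathcal{A})$ by Proposition \ref{prop:GPModel} --- amounts for a morphism between objects of $\mathcal{GP}(\mathcal{A})$ to saying that its image in $\underline{\mathcal{GP}}(\mathcal{A})$ is an isomorphism (consistent with Lemma \ref{lem:WEqu}). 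Throughout I will use that $\mathcal{GP}(\mathcal{A})$ is closed under extensions in $\mathcal{A}$ and that a short exact sequence with all three terms in $\mathcal{GP}(\mathcal{A})$ induces a distinguished triangle in $\underline{\mathcal{GP}}(\mathcal{A})$, with morphisms of such short exact sequences inducing morphisms of the associated triangles.

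First I would check the cofibration axioms. The zero object of $\mathcal{A}$ is projective, hence Gorenstein projective, so $\mathcal{GP}(\mathcal{A})$ is pointed and $0\to X$ is a cofibration for every $X$; every isomorphism is a cofibration trivially. For stability under cobase change, let $X\rightarrowtail Y$ be a cofibration with cokernel $Z\in\mathcal{GP}(\mathcal{A})$ and let $X\to X'$ be any morphism in $\mathcal{GP}(\mathcal{A})$; the pushout $Y'=Y\cup_X X'$ exists in the abelian category $\mathcal{A}$, the induced map $X'\to Y'$ is again a monomorphism with cokernel $Z$, and closure of $\mathcal{GP}(\mathcal{A})$ under extensions gives $Y'\in\mathcal{GP}(\mathcal{A})$, so $X'\rightarrowtail Y'$ is a cofibration of $\mathcal{GP}(\mathcal{A})$ and the entire pushout square lies in $\mathcal{GP}(\mathcal{A})$. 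A composite of two cofibrations is a cofibration, since its cokernel is an extension of two Gorenstein projective objects.

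The step requiring the most care, and the one I expect to be the main obstacle, is the weak equivalence and gluing axioms: one must keep all pushouts inside $\mathcal{GP}(\mathcal{A})$ while transferring the two-out-of-three behaviour from the triangulated category $\underline{\mathcal{GP}}(\mathcal{A})$ back to the exact category. That isomorphisms are weak equivalences and that weak equivalences compose is immediate from the description via $\underline{\mathcal{GP}}(\mathcal{A})$. For the gluing axiom, suppose we are given a commutative diagram with rows $C\leftarrow A\rightarrowtail B$ and $C'\leftarrow A'\rightarrowtail B'$, cofibrations $A\rightarrowtail B$ and $A'\rightarrowtail B'$, and weak equivalences $A\to A'$, $B\to B'$, $C\to C'$. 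Write $D$, $D'$ for the cokernels of $A\rightarrowtail B$ and $A'\rightarrowtail B'$; these lie in $\mathcal{GP}(\mathcal{A})$, and the data provide a morphism from the short exact sequence $0\to A\to B\to D\to 0$ to $0\to A'\to B'\to D'\to 0$, hence a morphism of the associated distinguished triangles in $\underline{\mathcal{GP}}(\mathcal{A})$; since $A\to A'$ and $B\to B'$ are isomorphisms there, the standard fact that the third leg of a morphism of triangles between isomorphisms is an isomorphism shows that $D\to D'$ is a weak equivalence. By the cobase-change axiom the pushouts $C\cup_A B$ and $C'\cup_{A'}B'$ exist in $\mathcal{GP}(\mathcal{A})$ and sit in short exact sequences $0\to C\to C\cup_A B\to D\to 0$ and $0\to C'\to C'\cup_{A'}B'\to D'\to 0$, related by a morphism of short exact sequences whose outer legs are the weak equivalences $C\to C'$ and $D\to D'$; passing to distinguished triangles in $\underline{\mathcal{GP}}(\mathcal{A})$ and invoking the same fact once more, the induced map $C\cup_A B\to C'\cup_{A'}B'$ is an isomorphism in $\underline{\mathcal{GP}}(\mathcal{A})$, i.e. a weak equivalence. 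This establishes the gluing axiom, so $\mathcal{GP}(\mathcal{A})$ is a Waldhausen category.

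Finally, the saturation axiom is automatic: the weak equivalences of $\mathcal{GP}(\mathcal{A})$ are exactly the morphisms sent to isomorphisms by the localization functor $\mathcal{GP}(\mathcal{A})\to\underline{\mathcal{GP}}(\mathcal{A})$, and isomorphisms in any category satisfy two-out-of-three, so for composable $f$, $g$, if two of $f$, $g$, $gf$ are weak equivalences then so is the third.
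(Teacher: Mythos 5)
Your proposal is correct and follows essentially the same route as the paper: equip $\mathcal{GP}(\mathcal{A})$ with the admissible monomorphisms of the Frobenius exact structure as cofibrations and the model-structure weak equivalences, verify the cofibration axioms via extension-closure, and establish the gluing axiom by passing to distinguished triangles in $\underline{\mathcal{GP}}(\mathcal{A})$ and applying the two-out-of-three property for morphisms of triangles. The only difference is cosmetic: you run the triangle argument in two steps (first $D\to D'$, then $C\sqcup_A B\to C'\sqcup_{A'}B'$), whereas the paper uses the single Mayer--Vietoris conflation $0\to X\to Y\oplus Z\to Y\sqcup_X Z\to 0$ and its primed counterpart; in fact you are slightly more explicit than the paper in stating why the triangulated two-out-of-three applies.
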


\begin{proof}
We use $\rightarrowtail$ to denote the cofibration. It is clear that every isomorphism is both a cofibration and a weak equivalence. For any $X\in \mathcal{GP}(\mathcal{A})$, $0\rightarrowtail X$ is a cofibration. For any cofibration $X\stackrel{f}\rightarrowtail Y$ and any map $X\stackrel{g}\rightarrow Z$, the following commutative diagram
$$\xymatrix{
X\ar@{>->}[r]^{f}\ar[d]_{g} &Y \ar[d]_{t} \ar[r] &\mathrm{Coker}(f) \ar@{=}[d]\\
Z\ar@{>->}[r]^{ s \quad} &Y\bigsqcup_{X}Z \ar[r] &\mathrm{Coker}(f)
}$$
yields that the pushout $Y\bigsqcup_{X}Z$ of these two maps exists in $\mathcal{GP}(\mathcal{A})$, and moreover, the map $Z\rightarrowtail Y\bigsqcup_{X}Z$ is a cofibration. Hence, $\mathcal{GP}(\mathcal{A})$ is a category with cofibrations.

It is clear that weak equivalences are closed under composition. It remains to show that weak equivalences satisfy the ``gluing axiom'', that is, for every commutative diagram
$$\xymatrix@C=30pt@R=20pt{
Y \ar[d]_{\beta} & X\ar@{>->}[l]_{f} \ar[r]^{g} \ar[d]^{\alpha} &Z\ar[d]^{\gamma}\\
Y' &X' \ar@{>->}[l]_{f'} \ar[r]^{g'} &Z'
}$$
in which the vertical maps are weak equivalences and the two left horizontal maps are cofibrations, the induced map from the pushout $Y\bigsqcup_{X} Z$ to the pushout $Y'\bigsqcup_{X'} Z'$ is also a weak equivalence.
We denote by $s': Z'\rightarrowtail Y'\bigsqcup_{X'}Z'$ and $t': Y'\rightarrowtail Y'\bigsqcup_{X'}Z'$. Since
$$s'\gamma g = s'g'\alpha = t'f'\alpha = t'\beta f,$$
by the universal property of the pushout, there exists a unique map $\delta: Y\bigsqcup_{X} Z\rightarrow Y'\bigsqcup_{X'} Z'$ such that $s'\gamma = \delta s$ and $t'\beta = \delta t$.
Moreover, we have the following commutative diagram in $\mathcal{A}$
$$\xymatrix{
X \ar[r]^{\left(\begin{smallmatrix} f\\ -g \end{smallmatrix}\right)} \ar[d]_{\alpha}  &Y\oplus Z \ar[r]^{(t,s)}\ar[d]^{\left(\begin{smallmatrix} \beta &0\\0 &\gamma \end{smallmatrix}\right)} &Y\bigsqcup_{X} Z \ar[d]^{\delta}\\
X'\ar[r]_{\left(\begin{smallmatrix} f'\\ -g' \end{smallmatrix}\right)} &Y'\oplus Z' \ar[r]_{(t', s')} &Y'\bigsqcup_{X'} Z'
}$$
which can also be considered as a commutative diagram in the stable category $\underline{\mathcal{GP}}(\mathcal{A})$. It is easy to see that for weak equivalences $\beta$ and $\gamma$, the induced map $\left(\begin{smallmatrix} \beta &0\\0 &\gamma \end{smallmatrix}\right)$ is also a weak equivalence. By Lemma \ref{lem:WEqu}, the vertical maps $\alpha$ and $\left(\begin{smallmatrix} \beta &0\\0 &\gamma \end{smallmatrix}\right)$ are isomorphism in $\underline{\mathcal{GP}}(\mathcal{A})$. Hence, $\delta: Y\bigsqcup_{X} Z\rightarrow Y'\bigsqcup_{X'} Z'$ is also an isomorphism in $\underline{\mathcal{GP}}(\mathcal{A})$. Again, we infer from Lemma \ref{lem:WEqu} that $\delta$ is a weak equivalence. This yields that $\mathcal{GP}(\mathcal{A})$ is a category with weak equivalences.

It follows immediately from the definition of model categories that weak equivalences satisfy 2-out-of-3 property; see \cite[Definition 1.1.3]{Hov99} or Definition \ref{def:MCat}. Hence, the saturation axiom holds.
\end{proof}

For a Waldhausen category $\mathcal{C}$, we denote by $w\mathcal{C}$ the family of weak equivalences in $\mathcal{C}$. Since every isomorphism is a weak equivalence and weak equivalences are closed under composition, we may regard  $w\mathcal{C}$ as a subcategory of $\mathcal{C}$.

For convenience and completeness, we will briefly recall Waldhausen's construction and the geometric realization of simplicial sets later in Appendix A, and refer to \cite{Wal85} and \cite{Web13} for details. For the category $w\mathcal{C}$, one may apply Waldhausen's $S_{\bullet}$-construction to obtain a simplicial category $wS_{\bullet}\mathcal{C}$. Denote by $|wS_{\bullet}\mathcal{C}|$ the realization of $wS_{\bullet}\mathcal{C}$.
In particular, let $\mathcal{C} = \mathcal{GP}(\mathcal{A})$, and then we have the following by \cite[IV, Definition 8.5]{Web13}; see also Definition \ref{def:K-group}.

\begin{definition}\label{def:GK-group}
For the Waldhausen category $\mathcal{C} =\mathcal{GP}(\mathcal{A})$, its $K$-theory space is the loop space $\Omega |wS_{\bullet}\mathcal{C}|$. For $i\geq 0$, the $K$-groups of $\mathcal{GP}(\mathcal{A})$ are defined to be the homotopy groups:
$$K_{i}(\mathcal{GP}(\mathcal{A})) = \pi_i(\Omega |wS_{\bullet}\mathcal{C}|) = \pi_{i+1}(|wS_{\bullet}\mathcal{C}|).$$
\end{definition}

In the following, we assume that $k$ is a fixed field, $A$ and $B$ are finite dimensional algebras with unit over $k$. Let $\mathcal{A}= A\mbox{-mod}$ and $\mathcal{B}= B\mbox{-mod}$ be the categories of finitely generated left modules over $A$ and $B$, respectively. We abuse the notations $\mathcal{GP}(A)$ and $\mathcal{GP}(B)$ to denote the subcategories of finitely generated Gorenstein projective modules over $A$ and $B$, respectively. The $K$-group
$K_i(\mathcal{GP}(A))$ is called $i^{th}$ \emph{Gorenstein $K$-group} of $A$, and is denoted by $K_i^G(A)$ for simplicity, where the superscript ``$G$'' stands for ``Gorenstein''.

To simplify the notational burden, we introduce the following.

\begin{definition}\label{def:G-staFroF}
Let $F: \mathcal{A}\leftrightarrows \mathcal{B} :H$ be a Frobenius pair of faithful functors between categories of finitely generated left modules over finite dimensional algebras $A$ and $B$. If it satisfies one of the equivalence conditions in Theorem \ref{thm:FQE}, then $(F, H)$ is called a Gorenstein stable Frobenius pair between $A$ and $B$, and $F$ (or $G$) is called a Gorenstein stable Frobenius functor.
\end{definition}

We will show in the next section that a stable equivalence of Morita type between $A$ and $B$ naturally induces a Gorenstein stable Frobenius pair between $A$ and $B$; see Proposition \ref{prop:AdjType}.

The following is immediate from Theorem \ref{thm:FQE} together with \cite[Corollary 3.10]{DS04}.

\begin{proposition}\label{prop:StaK-group}
Let $(F, H)$ be a Gorenstein stable Frobenius pair between $A$ and $B$. Then for any $i\geq 0$, there is an isomorphism of Gorenstein $K$-groups $K_i^G(A)\cong K_i^G(B)$.
\end{proposition}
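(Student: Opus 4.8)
The plan is to reduce the statement to the machinery already assembled in the paper, so that the actual work is little more than bookkeeping. By hypothesis $(F,H)$ is a Gorenstein stable Frobenius pair, which by Definition \ref{def:G-staFroF} means it satisfies the equivalent conditions of Theorem \ref{thm:FQE}; in particular condition (1) of that theorem gives that $(F,H)$ is a Quillen equivalence between the Gorenstein projective model categories $\mathcal{GP}^{<\infty}(\mathcal{A})$ and $\mathcal{GP}^{<\infty}(\mathcal{B})$, where $\mathcal{A}=A\mbox{-mod}$ and $\mathcal{B}=B\mbox{-mod}$. First I would record that these two model categories are \emph{stable} in the sense of \cite{DS04}: by Proposition \ref{prop:GPModel} their homotopy categories are the triangulated stable categories $\underline{\mathcal{GP}}(A)$ and $\underline{\mathcal{GP}}(B)$, so the homotopy categories are triangulated (equivalently pointed with invertible suspension), which is exactly the hypothesis needed to invoke \cite[Corollary 3.10]{DS04}.

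Next I would identify the $K$-theory spaces. The Waldhausen category $\mathcal{GP}(\mathcal{A})$ is precisely the subcategory of cofibrant-fibrant objects of the model category $\mathcal{GP}^{<\infty}(\mathcal{A})$ (every object is fibrant, and the cofibrant objects are exactly the Gorenstein projectives), and Lemma \ref{lem:Wad-GP} shows it is a Waldhausen category whose weak equivalences are the model-categorical weak equivalences restricted to it; Definition \ref{def:GK-group} then defines $K_i^G(A)=K_i(\mathcal{GP}(A))$ via the $S_\bullet$-construction applied to this Waldhausen category. Thus the $K$-theory space attached by \cite{DS04} to the stable model category $\mathcal{GP}^{<\infty}(\mathcal{A})$ agrees, up to weak equivalence, with $\Omega|wS_\bullet \mathcal{GP}(A)|$, and similarly for $B$. (If one wants to be careful about which Waldhausen category \cite{DS04} uses — finitely generated/compact cofibrant objects — one notes that over a finite-dimensional algebra $A$ all objects of $A\mbox{-mod}$ are finitely generated, so there is no discrepancy.)

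Then the conclusion is immediate: \cite[Corollary 3.10]{DS04} asserts that a Quillen equivalence between stable model categories induces a weak equivalence of $K$-theory spaces, hence isomorphisms on all homotopy groups. Applying this to the Quillen equivalence $(F,H)$ yields $\pi_i\bigl(\Omega|wS_\bullet\mathcal{GP}(A)|\bigr)\cong \pi_i\bigl(\Omega|wS_\bullet\mathcal{GP}(B)|\bigr)$ for all $i\geq 0$, i.e. $K_i^G(A)\cong K_i^G(B)$, which is the claim.

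The only genuine obstacle is the matching of Waldhausen categories: one must check that the $K$-theory space produced by \cite{DS04}'s recipe for the model category $\mathcal{GP}^{<\infty}(\mathcal{A})$ is the one computed from the Waldhausen category $\mathcal{GP}(A)$ of Lemma \ref{lem:Wad-GP} and Definition \ref{def:GK-group}. This comes down to the identification of cofibrant-fibrant objects with $\mathcal{GP}(A)$ (already used implicitly after Proposition \ref{prop:GPModel}, where $\mathcal{A}_{cf}=\mathcal{GP}(\mathcal{A})$ and $\omega=\mathcal{P}(\mathcal{A})$) together with the standard fact that Waldhausen $K$-theory of the cofibrant-fibrant objects of a model category computes the $K$-theory invariant used by Dugger–Shipley. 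Everything else — exactness and faithfulness of $F,H$, preservation of Gorenstein projectives and projectives, the Quillen equivalence — is supplied verbatim by Theorem \ref{thm:FQE} and its proof, so no new argument is required there.
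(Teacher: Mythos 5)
Your proposal is correct and follows exactly the paper's argument: the paper's proof consists of the single line "immediate from Theorem \ref{thm:FQE} together with \cite[Corollary 3.10]{DS04}", and your write-up simply spells out the identifications (Quillen equivalence from condition (1), stability via Proposition \ref{prop:GPModel}, and $\mathcal{GP}(A)$ as the cofibrant-fibrant Waldhausen subcategory) that make that invocation legitimate.
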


It is well known that Morita equivalences preserve classical $K$-groups, while it is unknown whether stable equivalences of Morita type preserve classical $K$-groups. By specifying the above result, we will see in the next section that Gorenstein $K$-groups are invariant under stable equivalences of Morita type.

By Waldhausen's construction, $S_0\mathcal{GP}(A)$ is trivial, then $|wS_{\bullet}\mathcal{GP}(A)|$ is a connected space. The object of the category $S_1\mathcal{GP}(A)$ is of the form $0\rightarrowtail G$ with $G\in \mathcal{GP}(A)$, and is thus isomorphic to $\mathcal{GP}(A)$. Hence $\pi_0(|wS_1\mathcal{GP}(A)|)$ is the set of weak equivalence classes of objects in $\mathcal{GP}(A)$.  The object of the category $S_2\mathcal{GP}(A)$ is of the form $0\rightarrowtail G_1 \rightarrowtail G_2$ together with $G_2/G_1$, and then $\pi_0(|wS_2\mathcal{GP}(A)|)$ is the set of weak equivalence classes of cofibration sequences. There are face maps
$d_i: S_2\mathcal{GP}(A)\rightarrow S_1\mathcal{GP}(A)$ ($0\leq i\leq 2$), such that for any object $\alpha = G_1 \rightarrowtail G_2\twoheadrightarrow G_2/G_1$ of $S_2\mathcal{GP}(A)$, one has
$d_0(\alpha) = G_2/G_1$, $d_1(\alpha) = G_2$ and  $d_2(\alpha) = G_1$. Hence, we have the following; see also \cite[II Definition 9.1.2]{Web13}.

\begin{definition}\label{def:K0}
For a finite dimensional algebra $A$, $K_0^G(A)$ is the Grothendieck group of weak equivalence classes of finitely generated Gorenstein projective modules over $A$ (or equivalently, the isomorphism classes in the Gorenstein stable categories $\underline{\mathcal{GP}}(A)$). That is, $K_0^G(A)$ is generated by the set of weak equivalence classes $[X]$ of $X\in \mathcal{GP}(A)$ with the relation that $[Y] = [X] + [Z]$ for any cofibration sequence
$X\rightarrowtail Y\twoheadrightarrow Z$.
\end{definition}

Note that each weak self-equivalence $\alpha: X\rightarrow X$ of $X\in \mathcal{GP}(A)$ yields an element of $\pi_{2}(|wS_{\bullet}\mathcal{GP}(A)|)$. In order to distinguish the weak self-equivalences of different modules, we denote these elements by pairs $(X, \alpha)$.

\begin{definition}\label{def:K1}
Let $A$ be a finite dimensional algebra. $K_1^G(A)$ is the free abelian group on pairs $(X, \alpha)$, where $X\in \mathcal{GP}(A)$ and $\alpha$ is a weak self-equivalence of $X$, modulo the following relations:
 \begin{enumerate}
\item $[(X, \alpha)] + [(X, \beta)] =[(X,\alpha\beta)]$;
\item If there is a commutative diagram of cofibration sequences
$$\xymatrix{
X\ar@{>->}[r]^{f}\ar[d]_{\alpha} &Y \ar[d]_{\beta} \ar@{->>}[r]^{g} &Z \ar[d]_{\gamma}\\
X\ar@{>->}[r]^{f} &Y\ar@{->>}[r]^{g} &Z
}$$
where $\alpha$, $\beta$ and $\gamma$ are weak self-equivalences, then
$$[(X, \alpha)] + [(Z, \gamma)] = [(Y, \beta)].$$
\end{enumerate}
\end{definition}

An algebra $A$ is \emph{CM-finite} provided that there are only finitely many isomorphism classes of indecomposable finitely generated Gorenstein projective $A$-modules; $A$ is \emph{CM-free} if all Gorenstein projective modules in $A$-mod are projective. It is clear that algebras of finite global dimension are CM-free. If $A$ is an Artin algebra, then $A$ is CM-finite if and only if there exists a Gorenstein projective module $G\in A\mbox{-mod}$ such that $\mathcal{GP}(A) = \mathrm{add}G$. We refer to \cite{Rin13} for examples of CM-finite algebras and CM-free algebras of infinite global dimension.

Before characterizing $K_1^G(A)$ for CM-finite Artin algebra $A$, we first show that CM-free and CM-finite properties are invariant under Gorenstein stable Frobenius functors.

\begin{proposition}\label{prop:CM-inv}
Let $(F, H)$ be a Gorenstein stable Frobenius pair between $A$ and $B$.
\begin{enumerate}
\item $A$ is CM-free if and only if $B$ is CM-free.
\item If $A$ and $B$ are Artin algebras, then $A$ is CM-finite if and only if $B$ is CM-finite.
\end{enumerate}
\end{proposition}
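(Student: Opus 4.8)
The plan is to reduce both assertions to the triangle equivalence $F\colon\underline{\mathcal{GP}}(A)\leftrightarrows\underline{\mathcal{GP}}(B)\colon H$ supplied by Theorem \ref{thm:FQE}(3), whose hypotheses hold because $(F,H)$ is a Gorenstein stable Frobenius pair (Definition \ref{def:G-staFroF}), together with the facts recorded in the proof of Theorem \ref{thm:FQE} that $F$ and $H$ are exact, faithful, and satisfy $F(\mathcal{P}(A))\subseteq\mathcal{P}(B)$, $F(\mathcal{GP}(A))\subseteq\mathcal{GP}(B)$, and symmetrically for $H$. I will use two elementary reformulations: $A$ is CM-free if and only if $\mathcal{GP}(A)=\mathcal{P}(A)$, equivalently $\underline{\mathcal{GP}}(A)=0$ (an object of a Frobenius category vanishes in the stable category precisely when it is projective); and, as noted just before the statement, an Artin algebra $A$ is CM-finite if and only if $\mathcal{GP}(A)=\mathrm{add}\,G$ for some $G\in\mathcal{GP}(A)$. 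I will also use that $F$ and $H$, being an adjoint pair of equivalences on the stable categories, are mutually quasi-inverse, so $FH\cong\mathrm{Id}_{\underline{\mathcal{GP}}(B)}$ and $HF\cong\mathrm{Id}_{\underline{\mathcal{GP}}(A)}$.

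For (1), I would argue by the symmetry between $A$ and $B$. Assuming $A$ is CM-free, take any $N\in\mathcal{GP}(B)$; then $H(N)\in\mathcal{GP}(A)=\mathcal{P}(A)$, hence $F(H(N))\in\mathcal{P}(B)$, i.e. $F(H(N))=0$ in $\underline{\mathcal{GP}}(B)$. Since $N\cong F(H(N))$ in $\underline{\mathcal{GP}}(B)$, it follows that $N=0$ there, that is, $N$ is projective. So $\mathcal{GP}(B)=\mathcal{P}(B)$ and $B$ is CM-free; interchanging $A$ and $B$ gives the converse.

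For (2), assuming $A$ is CM-finite with $\mathcal{GP}(A)=\mathrm{add}\,G$, I would show that $\mathcal{GP}(B)=\mathrm{add}(F(G)\oplus B)$, which exhibits $B$ as CM-finite. The inclusion ``$\supseteq$'' is immediate since $F(G)\in\mathcal{GP}(B)$, $B\in\mathcal{P}(B)\subseteq\mathcal{GP}(B)$, and $\mathcal{GP}(B)$ is closed under finite direct sums and direct summands. For ``$\subseteq$'', given $N\in\mathcal{GP}(B)$ one has $H(N)\in\mathrm{add}\,G$, so $H(N)$ is a direct summand of some $G^{n}$; applying the additive functor $F$ shows $F(H(N))$ is a direct summand of $F(G)^{n}$. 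On the other hand $N\cong F(H(N))$ in $\underline{\mathcal{GP}}(B)$, so by Lemma \ref{lem:WEqu} there are finitely generated projective $B$-modules $P,Q$ with $N\oplus P\cong F(H(N))\oplus Q$; hence $N$ is a direct summand of $F(G)^{n}\oplus Q\in\mathrm{add}(F(G)\oplus B)$. The converse again follows by symmetry. Alternatively, one can note that $\underline{\mathcal{GP}}(A)$ and $\underline{\mathcal{GP}}(B)$ are Krull--Schmidt since $A$ and $B$ are Artin, that $F$ then induces a bijection between their sets of isomorphism classes of indecomposable objects, and that, since an Artin algebra has only finitely many indecomposable projectives, CM-finiteness of $A$ is equivalent to $\underline{\mathcal{GP}}(A)$ having only finitely many indecomposables; this is then transported across $F$.

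The argument is essentially formal once Theorem \ref{thm:FQE} is available. The one step I expect to need care is in (2): the functor $F$ need not be dense on the non-stable category $\mathcal{GP}(B)$, so one cannot transport the generator $G$ directly; the detour through the stable equivalence, followed by restoring the (finitely many) projective indecomposables of $B$ via Lemma \ref{lem:WEqu}, is what makes the transfer of the identity $\mathcal{GP}=\mathrm{add}\,G$ work.
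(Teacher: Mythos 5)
Your proof is correct and follows essentially the same outline as the paper's: reduce (1) to the vanishing of the stable category and transport it across the triangle equivalence, and for (2) transport the additive generator by applying $F$ and showing that every $N\in\mathcal{GP}(B)$ is a direct summand of something in the image. The one point of genuine divergence is the last step of (2). The paper observes that the counit gives a short exact sequence $0\to\mathrm{Ker}(\varepsilon_Y)\to FH(Y)\to Y\to 0$ with $\mathrm{Ker}(\varepsilon_Y)$ simultaneously Gorenstein projective and of finite projective dimension, hence projective; the sequence then splits because $Y$ is Gorenstein projective, giving the sharper identity $\mathcal{GP}(B)=\mathrm{add}\,F(G)$. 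You instead pass to the stable isomorphism $N\cong FH(N)$ and invoke Lemma \ref{lem:WEqu} to produce projectives $P,Q$ with $N\oplus P\cong FH(N)\oplus Q$, from which $N$ is a summand of $F(G)^n\oplus Q$, yielding $\mathcal{GP}(B)=\mathrm{add}(F(G)\oplus B)$. Both conclusions establish CM-finiteness; the paper's route is marginally cleaner since it avoids Lemma \ref{lem:WEqu} and produces the minimal additive generator, while yours makes the reliance on the stable equivalence $FH\simeq\mathrm{Id}$ more visible and requires no explicit use of $\mathcal{GP}\cap\mathcal{P}^{<\infty}=\mathcal{P}$. Your alternative Krull--Schmidt argument (counting isomorphism classes of indecomposables in the stable category and then accounting for the finitely many projective indecomposables of an Artin algebra) is also valid and conceptually pleasant, though the paper does not take this route.
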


\begin{proof}
Note that $A$ is CM-free if $\underline{\mathcal{GP}}(A) = 0$. The assertion for CM-freeness is immediate since there are mutually inverse equivalences
$F: \underline{\mathcal{GP}}(A)\leftrightarrows \underline{\mathcal{GP}}(B) :H$ by Theorem \ref{thm:FQE}.

Now we assume that $A$ is a CM-finite Artin algebra, and $\mathcal{GP}(A) = \mathrm{add}G$ for a given Gorenstein projective $A$-module $G$. We claim that $\mathcal{GP}(B) = \mathrm{add}F(G)$, which implies that $B$ is CM-finite. Since $H$ is also a Gorenstein stable Frobenius functor, by a similar argument, we can prove that if $B$ is CM-finite, then so is $A$.

Clearly, $F(G)$ is a Gorenstein projective $B$-module, and then $\mathrm{add}F(G) \subseteq\mathcal{GP}(B)$. Let $Y\in  \mathcal{GP}(B)$ be any finitely generated Gorenstein projective $B$-module. Then $H(Y)\in \mathcal{GP}(A) = \mathrm{add}G$, and moreover, $FH(Y)\in  \mathrm{add}F(G)$. By Theorem \ref{thm:FQE}, there is an exact sequence of $B$-modules
$$0\longrightarrow \mathrm{Ker}(\varepsilon_Y)\longrightarrow FH(Y)\stackrel{\varepsilon_Y}\longrightarrow Y\longrightarrow 0$$
with $\mathrm{Ker}(\varepsilon_Y)$ being of finite projective dimension, where $\varepsilon$ is the counit of the adjoint pair $(F, H)$. Since $\mathrm{Ker}(\varepsilon_Y)$ is also Gorenstein projective, we get that $\mathrm{Ker}(\varepsilon_Y)$ is projective. Since $Y\in  \mathcal{GP}(B)$, the above sequence is split, and then we infer that $Y\in  \mathrm{add}F(G)$ because $Y$ is a direct summand of $FH(Y)$. This yields $\mathcal{GP}(B)\subseteq \mathrm{add}F(G)$, and finally, $\mathcal{GP}(B)= \mathrm{add}F(G)$.
\end{proof}

Let $\Lambda$ be a ring with unit. We denote by $GL_{n}(\Lambda)$ the group of $n\times n$ matrices over $\Lambda$. There is a group homomorphism $M\rightarrow \left(\begin{smallmatrix} M &0\\ 0 &1 \end{smallmatrix}\right)$ from  $GL_{n}(\Lambda)$ to  $GL_{n+1}(\Lambda)$. Denote by $GL(\Lambda):= \underrightarrow{\mathrm{lim}}_{n}GL_{n}(\Lambda)$.

The $n\times n$ elementary matrix $e_{ij}(\lambda)$ is defined by 1's on the diagonal, $\lambda\in \Lambda$ in the $(i,j)$-spot, and 0's elsewhere. The subgroup of $GL_{n}(\Lambda)$ generated by elementary matrices is denoted by $E_{n}(\Lambda)$. Denote by $E(\Lambda):= \underrightarrow{\mathrm{lim}}_{n}E_{n}(\Lambda)$ the {\em group of elementary matrices}.

By Whitehead's Lemma, the commutator subgroups of $GL(\Lambda)$ and of $E(\Lambda)$ coincide with $E(\Lambda)$, and moreover, $E(\Lambda)$ is normal in $GL(\Lambda)$; see for example \cite[Proposition 2.1.4]{Ros94}. The classical $K_1$-group $K_1(\Lambda)$ of $\Lambda$ is defined to be the abelian group $GL(\Lambda)/E(\Lambda)$.

We conclude this section by a characterization of $K_1^G(A)$ for the CM-finite Artin algebra $A$.

\begin{theorem}\label{thm:CharK1}
Assume $A$ is a CM-finite Artin algebra, and $\mathcal{GP}(A) = \mathrm{add}G$ for some Gorenstein projective $A$-module $G$. Let $\Lambda = \underline{\mathrm{End}}(G)$ be the endomorphism ring of $G$ in the stable category $\underline{\mathcal{GP}}(A)$. There is an isomorphism $K_1^G(A)\cong K_1(\Lambda)$.
\end{theorem}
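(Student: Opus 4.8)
The plan is to realize $K_1^G(A)$ as a generators-and-relations group built from weak self-equivalences in $\mathcal{GP}(A)$, and then match it with the standard presentation of $K_1(\Lambda)$ as $GL(\Lambda)/E(\Lambda)$ via the equivalence $\mathcal{GP}(A)=\mathrm{add}\,G$. First I would record the structural input: by Lemma \ref{lem:WEqu}, a weak self-equivalence $\alpha\colon X\to X$ is precisely a map that becomes an automorphism in $\underline{\mathcal{GP}}(A)$, so the pairs $(X,\alpha)$ generating $K_1^G(A)$ are exactly pairs consisting of $X\in\mathcal{GP}(A)$ together with an automorphism of $X$ in the stable category. Since $\mathcal{GP}(A)=\mathrm{add}\,G$, every such $X$ is stably isomorphic to $G^n$ for some $n$, and every stable automorphism of $G^n$ is given by an invertible matrix over $\Lambda=\underline{\mathrm{End}}(G)$; the splitting relation (2) in Definition \ref{def:K1} applied to the split cofibration sequence $X\rightarrowtail X\oplus X'\twoheadrightarrow X'$ lets me replace any $(X,\alpha)$ by $(G^n,\beta)$ with $\beta\in GL_n(\Lambda)$ (absorbing a complementary summand so that $X\oplus X'\cong G^n$ stably, using that $K_1^G$ is insensitive to adding identity self-equivalences of projectives, which are trivial since $\Lambda$ kills projectives). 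This produces a surjective homomorphism $GL(\Lambda)\to K_1^G(A)$, $\beta\mapsto[(G^n,\beta)]$, well-defined on the colimit because stabilization $\beta\mapsto\mathrm{diag}(\beta,1)$ corresponds to the split inclusion $G^n\hookrightarrow G^{n+1}$ and relation (2).

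Next I would check that relation (1), $[(X,\alpha)]+[(X,\beta)]=[(X,\alpha\beta)]$, says exactly that the above map $GL(\Lambda)\to K_1^G(A)$ is a group homomorphism, and that it factors through $GL(\Lambda)/E(\Lambda)=K_1(\Lambda)$: an elementary matrix $e_{ij}(\lambda)\in E_n(\Lambda)$ fits into a commutative diagram of split cofibration sequences of the shape in Definition \ref{def:K1}(2) with outer terms $G^{n-1}$ and $G$ and self-equivalences $1$, $e_{ij}(\lambda)$, $1$ (the classical argument that elementary matrices are "trivial" in $K_1$, transported through the additive structure of $\mathrm{add}\,G$), so $[(G^n,e_{ij}(\lambda))]=0$. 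Hence I obtain an induced surjection $K_1(\Lambda)\twoheadrightarrow K_1^G(A)$.

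For injectivity I would construct the inverse: send a generator $(X,\alpha)$ of $K_1^G(A)$ to the class in $K_1(\Lambda)$ of the matrix representing $\alpha$ after choosing a stable isomorphism $X\cong G^n$. I must verify this is independent of the choice (two stable isomorphisms differ by an automorphism of $G^n$, i.e.\ conjugation, which is trivial in $K_1(\Lambda)$ by Whitehead's Lemma) and that it respects both defining relations of $K_1^G(A)$: relation (1) is multiplicativity of the matrix assignment, and relation (2) requires showing that for a commutative diagram of cofibration sequences with stable self-equivalences $\alpha,\beta,\gamma$, the matrix of $\beta$ is stably equivalent to $\mathrm{diag}(\alpha,\gamma)$ up to $E(\Lambda)$ — this is the stable-category analogue of the block-triangular matrix computation $\bigl(\begin{smallmatrix}\alpha&*\\0&\gamma\end{smallmatrix}\bigr)\equiv\mathrm{diag}(\alpha,\gamma)\pmod{E}$, valid because in $\underline{\mathcal{GP}}(A)$ the sequence $X\rightarrowtail Y\twoheadrightarrow Z$ is a triangle and (since $\mathrm{add}\,G$ has split idempotents) $Y$ is stably a direct sum $X\oplus Z$ with $\beta$ of block-triangular form. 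The two constructions are visibly mutually inverse, giving $K_1^G(A)\cong K_1(\Lambda)$.

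The main obstacle I anticipate is bookkeeping around the non-functorial "stably isomorphic to $G^n$" step: a morphism $X\to Y$ in $\mathcal{GP}(A)$ need not be represented by an honest matrix over $\Lambda$ after choosing decompositions, only by one in the stable category, and a cofibration $X\rightarrowtail Y$ in $\mathcal{GP}(A)$ (a genuine monomorphism with Gorenstein-projective cokernel) need not split in $\mathcal{A}$ even though its image in $\underline{\mathcal{GP}}(A)$ sits in a split triangle. So the crux is to carefully pass relation (2) of Definition \ref{def:K1} from honest cofibration sequences to the stable category, where it becomes the elementary block-matrix identity; everything else is the standard dictionary between $K_1$ of a ring and self-equivalences in its (idempotent-complete) category of finitely generated modules, here $\mathrm{add}\,G\simeq$ f.g.\ projective $\Lambda$-modules.
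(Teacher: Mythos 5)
Your proposal is correct and follows essentially the same route as the paper's proof: build the map $GL(\Lambda)\to K_1^G(A)$ via $M\mapsto[(G^m,\alpha)]$, kill elementary matrices with the diagram of cofibration sequences having identity maps on the outer terms, construct the inverse $\psi$ by choosing stable complements and reading off a matrix, and verify $\psi$ respects relation (2) via the block-triangular factorization $\bigl(\begin{smallmatrix}\alpha&*\\0&\gamma\end{smallmatrix}\bigr)=\mathrm{diag}(\alpha,\gamma)\cdot\bigl(\begin{smallmatrix}\mathrm{Id}&*\\0&\mathrm{Id}\end{smallmatrix}\bigr)$ together with the Whitehead-type identity showing $\mathrm{diag}(\alpha,\gamma)$ and $\mathrm{diag}(\alpha,1)\mathrm{diag}(\gamma,1)$ agree modulo $E(\Lambda)$. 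Your explicit attention to well-definedness of the matrix representative (conjugation invariance in $K_1(\Lambda)$) and to passing relation (2) from honest cofibrations to split triangles in $\underline{\mathcal{GP}}(A)$ fills in points the paper treats more tersely, but the argument is the same one.
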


\begin{proof}
If $M\in GL_{m}(\Lambda)$, then $M$ defines an automorphism $\alpha$ of $G^m$ in $\underline{\mathcal{GP}}(A)$, or equivalently, a weak self-equivalence of $G^m$, which is denoted by $f_m(M) =[(G^m, \alpha)]$. Let $\mathrm{Id}_{n}$ be the identity map of $G^n$. Then there is a map $f_{m+n}\left(\begin{smallmatrix} M &0\\ 0 &1_{n} \end{smallmatrix}\right) = [(G^{m+n}, \alpha\oplus \mathrm{Id}_n)]$. For any $X\in \mathcal{GP}(A)$, we observe that $[(X, \mathrm{Id})] + [(X, \mathrm{Id})] =[(X, \mathrm{Id}\circ \mathrm{Id})] = [(X, \mathrm{Id})]$, and then $[(X, \mathrm{Id})] = 0$. Hence, $[(G^{m+n}, \alpha\oplus \mathrm{Id}_n)] = [(G^{m}, \alpha)] + [(G^{n}, \mathrm{Id}_n)] = [(G^{m}, \alpha)]$. Then, there is a commutative diagram
$$\xymatrix{
GL_{n}(\Lambda)\ar[rr]^{}\ar[rd]_{f_m} && GL_{m+n}(\Lambda) \ar[ld]^{f_{m+n}}\\
&K^G_1(A)
}$$
Hence, we have a map $f= \underrightarrow{\mathrm{lim}}f_{n}: GL(\Lambda)\rightarrow K^G_1(A)$.

For any elementary matrix $e_{ij}(\lambda)\in E_l(\Lambda)$, there is a commutative diagram
$$\xymatrix{
G^{l-1}\ar@{>->}[r]^{\rho}\ar@{=}[d] &G^{l} \ar[d]_{e_{ij}(\lambda)} \ar@{->>}[r]^{\pi} &G \ar@{=}[d]\\
G^{l-1}\ar@{>->}[r]^{\rho} &G^l \ar@{->>}[r]^{\pi} &G
}$$
where $\pi$ is a projection onto the $i$-th coordinate, and $\rho$ maps $(g_1,\cdots,\widehat{g_i},\cdots, g_l)$ to $(g_1,\cdots,0,\cdots, g_l)$. Then, by Definition \ref{def:K1} it follows that
$[(G^l, e_{ij}(\lambda))] = [(G^{l-1}, \mathrm{Id}_{l-1})] + [(G, \mathrm{Id}_{1})] = [(G^l, \mathrm{Id}_{l})]$. Moreover, for any $L\in E_{l}(\Lambda)$, we have $f(L)= [(G^l, \mathrm{Id}_{l})] = 0$.

Now, let $\varphi: GL(\Lambda)/E(\Lambda)\rightarrow K_1^G(A)$ be a map defined by $[M]\rightarrow f(M) = [(G^m, \alpha)]$. Let $N\in GL_{n}(\Lambda)$, which defines an automorphism $\beta: G^n\rightarrow G^n$ in $\underline{\mathcal{GP}}(A)$. Assume $[M] = [N]$, that is, there is some $l\geq \rm{max}\{m, n\}$, such that
$\left(\begin{smallmatrix} M &0\\ 0 &1_{l-m} \end{smallmatrix}\right) \left(\begin{smallmatrix} N &0\\ 0 &1_{l-n} \end{smallmatrix}\right)^{-1} \in E_l(\Lambda)$.  Then we have
$$[(G^l, \alpha\oplus \mathrm{Id}_{l-m})] - [(G^l, \beta\oplus \mathrm{Id}_{l-n})] = 0.$$
Note that $[(G^m, \alpha)] = [(G^l, \alpha\oplus \mathrm{Id}_{l-m})]$ and $[(G^n, \beta)] = [(G^l, \beta\oplus \mathrm{Id}_{l-n})]$. Hence, $\varphi([M]) = \varphi([N])$. This implies that the map
$\varphi: GL(\Lambda)/E(\Lambda)\rightarrow K_1^G(A)$ is well-defined.

It is easy to see that $\varphi$ is a group homomorphism. If there is a matrix $M'\in GL_{m}(\Lambda)$ which defines a weak self-equivalence $\alpha'$ of $G^m$, then the equality $[(G^m, \alpha)] + [(G^m, \alpha')] = [(G^m, \alpha'\alpha)]$ holds immediately by Definition \ref{def:K1}.

Let $[(X, \alpha)]$ be any element of $K_1^G(A)$, where $\alpha$ is a weak self-equivalence of $X\in \mathcal{GP}(A)$. Since $\mathcal{GP}(A) = \mathrm{add}G$, there exists $Y\in \mathcal{GP}(A)$ such that
$X\oplus Y\cong G^n$. Then $[(X, \alpha)] = [(X, \alpha)] + [(Y, \mathrm{Id}_Y)] = [(G^n, \alpha\oplus \mathrm{Id}_Y)]$. Assume $X\oplus Z\cong G^m$, then $G^n\oplus Z \cong X\oplus Y\oplus Z \cong G^m \oplus Y$, and moreover,
$[(X, \alpha)] = [(G^{n+m}, \alpha\oplus \mathrm{Id}_{G^n\oplus Z})] = [(G^{n+m}, \alpha\oplus \mathrm{Id}_{G^m\oplus Y})]$. Hence, there exists a map $g_X: W(X)\rightarrow GL(\Lambda)/E(\Lambda)$ which only depends on $(X, \alpha)$, where $W(X)$ is the set of weak self-equivalences of $X$. We abuse the notation to denote $g_X(\alpha)$ by the matrix $\left(\begin{smallmatrix} \alpha &0\\ 0 &\mathrm{Id} \end{smallmatrix}\right)$.

If $[(X, \alpha)] + [(X, \beta)] =[(X, \alpha\beta)]$, then it follows from
$$\left(\begin{matrix} \alpha\beta &0\\ 0 &\mathrm{Id} \end{matrix}\right) =  \left(\begin{matrix} \alpha &0\\ 0 &\mathrm{Id} \end{matrix}\right) \left(\begin{matrix} \beta &0\\ 0 &\mathrm{Id} \end{matrix}\right)$$
that $g_X(\alpha\beta) = g_X(\alpha) g_X(\beta)$.

Consider the commutative diagram of cofibration sequences in Definition \ref{def:K1}:
$$\xymatrix{
X\ar@{>->}[r]^{f}\ar[d]_{\alpha} &Y \ar[d]_{\beta} \ar@{->>}[r]^{g} &Z \ar[d]_{\gamma}\\
X\ar@{>->}[r]^{f} &Y\ar@{->>}[r]^{g} &Z
}$$
where $\alpha$, $\beta$ and $\gamma$ are weak self-equivalences. Let $\alpha' = g_X(\alpha) = \left(\begin{smallmatrix} \alpha &0\\ 0 &\mathrm{Id} \end{smallmatrix}\right)$,
$\beta' = g_Y(\beta)$ and $\gamma' = g_Z(\gamma)$. Then there is a commutative diagram of cofibration sequences
$$\xymatrix{
G^n\ar@{>->}[r]^{f'}\ar[d]_{\alpha'} &G^{n+m} \ar[d]_{\beta'} \ar@{->>}[r]^{g'} &G^{m} \ar[d]_{\gamma'}\\
G^{n}\ar@{>->}[r]^{f'} &G^{n+m}\ar@{->>}[r]^{g'} &G^m
}$$
where $f' = \left(\begin{smallmatrix} f &0\\ 0 &0 \end{smallmatrix}\right)$,  $g' = \left(\begin{smallmatrix} g &0\\ 0 &0 \end{smallmatrix}\right)$.  Since $G^{n+m} = G^n\oplus G^m$, the matrix $\beta'$ can be represented by
$$\left(\begin{matrix} \alpha' &\alpha'\ast\\ 0 &\gamma' \end{matrix}\right) =  \left(\begin{matrix} \alpha' &0\\ 0 &\gamma'\end{matrix}\right) \left(\begin{matrix} \mathrm{Id} & \ast\\ 0 &\mathrm{Id} \end{matrix}\right),$$
where $\left(\begin{smallmatrix} \mathrm{Id} & \ast\\ 0 &\mathrm{Id} \end{smallmatrix}\right)\in E(\Lambda)$. Moreover, we have
$$\left(\begin{matrix} \alpha' &0\\ 0 &\gamma' \end{matrix}\right) =  \left(\begin{matrix} \alpha' &0\\ 0 &\mathrm{Id}\end{matrix}\right) \left(\begin{matrix} 0 & -\mathrm{Id}\\ \mathrm{Id} &0 \end{matrix}\right)
\left(\begin{matrix} \gamma' &0\\ 0 &\mathrm{Id}\end{matrix}\right) \left(\begin{matrix} 0 &\mathrm{Id}\\ -\mathrm{Id} &0 \end{matrix}\right).$$
Then $g_Y(\beta) = g_X(\alpha)g_Z(\gamma)$. Hence, we can define a group homomorphism
$\psi: K_1^G(A) \rightarrow  GL(\Lambda)/E(\Lambda)$ by $\psi(\alpha) = g_{X}(\alpha)$.

It is direct to check that $\varphi$ and $\psi$ are mutually inverse. Then, we get the desired isomorphism $K_1^G(A)\cong GL(\Lambda)/E(\Lambda) =  K_1(\Lambda)$.
\end{proof}

\section{\bf Applications: stable equivalences of Morita type}

In this section, we will show that if there is a stable equivalence of Morita type between two finite dimensional algebras, then Theorem \ref{thm:FQE} holds. In this case, $K$-theory of Gorenstein projective modules, and some properties such as CM-freeness and CM-finiteness, are invariant.

Let $A$ and $B$ be rings. Recall that a $B$-$A$-bimodule $M$ is a \emph{Frobenuis bimodule}, if both $_{B}M$ and $M_{A}$ are finitely generated projective modules, and there is an $A$-$B$-bimodule isomorphism $${^{*}M}:= \mathrm{Hom}_{B}(M, B)\cong \mathrm{Hom}_{A^{op}}(M, A): = M^{*}.$$
Denote the common $A$-$B$-bimodule by $N$. Then $(M\otimes_{A}-, N\otimes_{B}-)$ is a Frobenius pair between $A$-Mod and $B$-Mod. Indeed, it follows from \cite[Theorem 2.1]{CIGTN99} that any classical Frobenius pair between two module categories are of this form, that is, for any Frobenius functor $F$ between categories of modules, one has $F \cong M\otimes -$ for a Frobenius bimodule $M$. In particular, $\theta: R\rightarrow S$ is a Frobenius extension of rings, if and only if $S$ is a Frobenius $R$-$S$-bimodule.

Recall that a ring is \emph{Gorenstein regular} if it is of finite global Gorenstein projective dimension; in this case, the category of modules is a Gorenstein category. The following is immediate from \cite[Theorem 3.2, Corollary 3.3]{CR20}, which implies that we can get some new Gorenstein categories from the given ones via faithful Frobenius functors.

\begin{lemma}\label{lem:GpMod}
Let $_{B}M_{A}$ be a Frobenuis bimodule.
\begin{enumerate}
\item Let $X$ be any left $A$-module. If $X\in \mathcal{GP}(A)$, then $M\otimes_{A} X \in\mathcal{GP}(B)$; the converse holds if $M_{A}$ is a generator.
\item If $_{B}M$ and $M_{A}$ are generators, then $A$ is Gorenstein regular if and only if $B$ is Gorenstein regular.
\end{enumerate}
\end{lemma}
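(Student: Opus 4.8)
The plan is to read this lemma as a translation of \cite[Theorem 3.2]{CR20} and \cite[Corollary 3.3]{CR20} into the language of Frobenius bimodules, so that most of the work is unwinding the dictionary. By \cite[Theorem 2.1]{CIGTN99} recalled above, a Frobenius bimodule ${}_{B}M_{A}$ yields a Frobenius pair $(F,H)=(M\otimes_{A}-,\,N\otimes_{B}-)$ between $A$-Mod and $B$-Mod, where $N={}^{*}M\cong M^{*}$ is simultaneously an $A$-$B$-bimodule that is finitely generated projective on both sides; in particular $N$ is again a Frobenius bimodule and $H=N\otimes_{B}-$ is a Frobenius functor. Both $F$ and $H$ are exact (their bimodules are projective, hence flat, on the pertinent side) and preserve projective objects, since $F(A)={}_{B}M$ and $H(B)={}_{A}N$ are projective.

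For part (1), the inclusion $F(\mathcal{GP}(A))\subseteq\mathcal{GP}(B)$ is \cite[Theorem 3.2]{CR20}; it can also be seen directly, and I would record the argument: given a totally acyclic complex $\mathbf{P}$ of projective $A$-modules with $X\cong Z_{0}(\mathbf{P})$, exactness and projective-preservation make $F\mathbf{P}$ an acyclic complex of projective $B$-modules, and for any projective $B$-module $Q$ the adjunction isomorphism $\mathrm{Hom}_{B}(F\mathbf{P},Q)\cong\mathrm{Hom}_{A}(\mathbf{P},HQ)$, together with $HQ$ being projective over $A$, shows that $F\mathbf{P}$ is totally acyclic, whence $FX\cong Z_{0}(F\mathbf{P})\in\mathcal{GP}(B)$. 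For the converse, assume $M_{A}$ is a generator; then $M_{A}$ is a progenerator and the evaluation map $N\otimes_{B}M\to A$ is a surjection of $A$-$A$-bimodules, so it splits as a map of left $A$-modules. Tensoring the splitting with $X$ exhibits $X$ as a direct summand of $HFX=N\otimes_{B}M\otimes_{A}X$ over $A$. Hence if $M\otimes_{A}X\in\mathcal{GP}(B)$, then applying the already established forward direction to the Frobenius bimodule ${}_{A}N_{B}$ gives $HFX\in\mathcal{GP}(A)$, and since $\mathcal{GP}(A)$ is closed under direct summands we conclude $X\in\mathcal{GP}(A)$.

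For part (2), recall that $A$ is Gorenstein regular exactly when $A$-Mod $=\mathcal{GP}^{<\infty}(A)$, i.e.\ every $A$-module has finite Gorenstein projective dimension; this is the content cited from \cite[Corollary 3.3]{CR20}, and I would argue it as follows. An exact functor that preserves both projectives and Gorenstein projectives preserves finite Gorenstein projective dimension (apply it to a finite $\mathcal{GP}$-resolution), so $F$ and $H$ both send modules of finite $\mathrm{Gpd}$ to modules of finite $\mathrm{Gpd}$. Suppose $A$ is Gorenstein regular and let $Y$ be any $B$-module. Then $N\otimes_{B}Y$ has finite $\mathrm{Gpd}$ over $A$, hence $FHY=M\otimes_{A}N\otimes_{B}Y$ has finite $\mathrm{Gpd}$ over $B$; since ${}_{B}M$ is a generator, the evaluation $M\otimes_{A}N\to B$ splits as a map of left $B$-modules, so $Y$ is a direct summand of $FHY$, and finiteness of $\mathrm{Gpd}$ passes to summands. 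Thus every $B$-module has finite $\mathrm{Gpd}$ and $B$ is Gorenstein regular; the converse is symmetric, now using that $M_{A}$ is a generator.

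The routine ingredients here — exactness and projective-preservation of the tensor functors, closure of $\mathcal{GP}$ and $\mathcal{GP}^{<\infty}$ under summands, and the totally-acyclic bookkeeping — are standard. The one point that needs genuine care is the handedness of the two generator hypotheses: one must verify that $M_{A}$ being a generator is precisely what makes $X$ a summand of $HFX$ (via the left $A$-module splitting of $N\otimes_{B}M\to A$), whereas ${}_{B}M$ being a generator is what makes $Y$ a summand of $FHY$. Keeping the two evaluation maps $N\otimes_{B}M\to A$ and $M\otimes_{A}N\to B$ straight, and exploiting exactly this asymmetry in the two directions, is the main obstacle.
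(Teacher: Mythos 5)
The forward implication in part (1) is argued correctly: $F$ exact with $F(\mathcal{P}(A))\subseteq\mathcal{P}(B)$ and $H(\mathcal{P}(B))\subseteq\mathcal{P}(A)$ implies that $F$ carries a totally acyclic complex to a totally acyclic complex, by the adjunction isomorphism $\mathrm{Hom}_{B}(F\mathbf{P},Q)\cong\mathrm{Hom}_{A}(\mathbf{P},HQ)$. That much matches what \cite[Theorem 3.2]{CR20} does (the paper itself only cites that reference). The problem is the pivot you use for the converse and for part (2): the claim that $X$ is a direct summand of $HFX$.

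You argue that $M_A$ a generator makes the trace map $\pi\colon N\otimes_{B}M\to A$ a bimodule surjection, hence split as left $A$-modules, and that ``tensoring the splitting with $X$'' exhibits $X$ as a summand of $(N\otimes_{B}M)\otimes_{A}X=HFX$. This does not follow. To apply $-\otimes_{A}X$ to the split exact sequence one needs the section $\sigma\colon A\to N\otimes_{B}M$ to be \emph{right} $A$-linear; the projectivity of $A$ as a \emph{left} $A$-module only provides a left-linear section, and a one-sided splitting of a bimodule map does not survive tensoring on the other side. And in fact the conclusion is simply false in general: take $A=k[t]/(t^{2})$, $B=k$, and $M={}_{k}A_{A}$ (a Frobenius bimodule since $A$ is a Frobenius algebra, with $M_A=A_A$ a progenerator). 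Then $F$ is restriction, $H=A\otimes_{k}-$, and for the simple module $X=k$ one gets $HFX\cong A$ as a left $A$-module, which is indecomposable of length two; $X=k$ is \emph{not} a direct summand of $HFX$. So the ``summand'' step is not a fixable gap in phrasing but a wrong claim. The same issue undermines part (2), where you argue $Y$ is a summand of $FHY$ using ${}_{B}M$ a generator.

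What the generator hypothesis actually gives you, by the same trace-map computation, is that the counit $\varepsilon'_{X}\colon HFX\to X$ is \emph{epi} (not split epi), and that ${}_{A}N$ is a progenerator of $A$-Mod, so every projective $A$-module is a summand of some $H(Q)$ with $Q$ projective over $B$; this yields the $\mathrm{Ext}$-vanishing $\mathrm{Ext}^{\geq 1}_{A}(X,\mathcal{P}(A))=0$ from $\mathrm{Ext}^{\geq 1}_{B}(FX,\mathcal{P}(B))=0$. But the passage from these facts to $X\in\mathcal{GP}(A)$ (and likewise the transfer of Gorenstein regularity in part (2)) requires a further argument — essentially the content of \cite[Theorem 3.2, Corollary 3.3]{CR20}, to which the paper appeals directly rather than reproving. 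If you want a self-contained proof you would need to replace the summand argument by something that actually produces the coresolution side of the totally acyclic complex for $X$ from the one for $FX$.
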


We remark that Frobenius bimodules arise naturally in stable equivalences of Morita type; see for example \cite{DM07, Xi08}. In the rest of the paper, we denote by $k$ a fixed field, and all algebras are assumed to be finite dimensional $k$-algebras with unit. By a module we always mean a finitely generated left modules.

\begin{definition}\label{def: StaEMT} (\cite{Bro92})
Let $A$ and $B$ be algebras. We say that two bimodules $_{A}N_{B}$ and $_{B}M_{A}$ define a \emph{stable equivalence of Morita type} between $A$ and $B$ if both $M$ and $N$ are projective both as left modules and as right modules, and if $N\otimes_{B} M \cong A\oplus P$ as $A$-$A$-bimodules for some projective $A$-$A$-bimodule $P$, and $M\otimes_{A} N\cong B\oplus Q$ as $B$-$B$-bimodules for some projective $B$-$B$-bimodule $Q$.
\end{definition}

The following facts are standard.

\begin{lemma}\label{lem:facts} Keep the notations as above. Then the following statements hold.
\begin{enumerate}
\item The images of the functors $T_{P} = P\otimes_{A}-$ and $T_{Q} = Q\otimes_{B}-$ consist of projective modules.
\item The bimodules $_{B}M_{A}$ and $_{A}N_{B}$ are projective generators both as left modules and as right modules. Therefore, the functors $T_{M} = M\otimes_{A}-$ and $T_{N} = N\otimes_{B}-$ are faithful.
\item $T_{N}\circ T_{M}\rightarrow \mathrm{Id}_{A\text{-}\mathrm{mod}}\oplus T_{P}$ and $T_{M}\circ T_{N}\rightarrow \mathrm{Id}_{B\text{-}\mathrm{mod}}\oplus T_{Q}$ are natural isomorphisms.
\item If $(M\otimes_{A}-, N\otimes_{B}-)$ and $(N\otimes_{B}-, M\otimes_{A}-)$ are adjoint pairs, then both $M$ and $N$ are Frobenius bimodules.
\end{enumerate}
\end{lemma}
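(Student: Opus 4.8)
The plan is to establish the four statements essentially independently, using only the defining bimodule isomorphisms of a stable equivalence of Morita type together with standard facts about tensor functors between module categories. For (1), note that a projective $A$-$A$-bimodule is a direct summand of a finite direct sum of copies of $A\otimes_k A$; applying $-\otimes_A X$ for $X\in A\text{-}\mathrm{mod}$ and using associativity gives $(A\otimes_k A)\otimes_A X\cong A\otimes_k X$ as left $A$-modules, which is free of finite rank. Hence $T_P(X)=P\otimes_A X$ is a direct summand of a finitely generated free left $A$-module, so projective; the argument for $T_Q$ is identical. For (3), apply $-\otimes_A X$ to the $A$-$A$-bimodule isomorphism $N\otimes_B M\cong A\oplus P$ and combine with the natural associativity isomorphism $N\otimes_B(M\otimes_A X)\cong(N\otimes_B M)\otimes_A X$; this yields a natural isomorphism $T_N\circ T_M\cong\mathrm{Id}_{A\text{-}\mathrm{mod}}\oplus T_P$, and symmetrically $T_M\circ T_N\cong\mathrm{Id}_{B\text{-}\mathrm{mod}}\oplus T_Q$ from $M\otimes_A N\cong B\oplus Q$.

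For (2), projectivity of ${}_BM_A$ and ${}_AN_B$ on both sides is part of the definition (and these bimodules are finitely generated since the algebras are finite-dimensional). For the generator property: writing ${}_BM$ as a direct summand of some $B^n$, the bimodule $N\otimes_B M$ is a direct summand of $N^n$ as a left $A$-module; since $N\otimes_B M\cong A\oplus P$, the regular module ${}_AA$ is a direct summand of $N^n$, so ${}_AN$ is a generator. Running the same argument with the three remaining one-sided structures (using also $M\otimes_A N\cong B\oplus Q$) shows that $M_A$, ${}_BM$ and $N_B$ are generators as well. Faithfulness of $T_M$ and $T_N$ then follows from the generator property, or more directly from (3): if a morphism $f$ in $A\text{-}\mathrm{mod}$ is nonzero, then $T_N T_M(f)\cong f\oplus T_P(f)$ is nonzero, hence $T_M(f)\neq 0$.

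For (4), I would invoke uniqueness of adjoints. The right adjoint of $T_M=M\otimes_A-$ is $\mathrm{Hom}_B({}_BM_A,-)$, and since ${}_BM$ is finitely generated projective this is naturally isomorphic to $({}^{*}M)\otimes_B-$, where ${}^{*}M=\mathrm{Hom}_B(M,B)$ is viewed as an $A$-$B$-bimodule. The hypothesis that $(M\otimes_A-,\,N\otimes_B-)$ is an adjoint pair identifies $N\otimes_B-$ with this right adjoint, and evaluating the resulting natural isomorphism of functors at ${}_BB_B$ gives $N\cong{}^{*}M$ as $A$-$B$-bimodules. Symmetrically, using that $(N\otimes_B-,\,M\otimes_A-)$ is an adjoint pair together with ${}_AN$ finitely generated projective, one gets $M\cong\mathrm{Hom}_A({}_AN,A)$ as $B$-$A$-bimodules. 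Finally, since ${}_AN$ is finitely generated projective, the biduality map $N\to\mathrm{Hom}_{A^{op}}(\mathrm{Hom}_A(N,A),A)$ is an isomorphism of $A$-$B$-bimodules, and substituting $M\cong\mathrm{Hom}_A({}_AN,A)$ into the right-hand side gives $N\cong\mathrm{Hom}_{A^{op}}(M,A)=M^{*}$. Combining, ${}^{*}M\cong N\cong M^{*}$ as $A$-$B$-bimodules; an entirely symmetric chain (applying biduality to ${}_BM$) gives ${}^{*}N\cong M\cong N^{*}$, so both ${}_BM_A$ and ${}_AN_B$ are Frobenius bimodules.

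The only genuinely delicate point is (4): one must keep careful track of the left/right module structures when identifying the various Hom-adjoints with tensor functors, and one needs the Eilenberg--Watts-type principle that a natural isomorphism between functors of the form $X\otimes_B-$ and $X'\otimes_B-$ forces $X\cong X'$ as bimodules (which is exactly what makes the ``evaluate at ${}_BB_B$'' step legitimate). Items (1)--(3) are routine bookkeeping with the two defining bimodule isomorphisms and associativity of $\otimes$.
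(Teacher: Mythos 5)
Your proof is correct, and all four parts are argued soundly. It is worth noting that the paper itself does not prove this lemma at all: its ``proof'' is a list of four citations ([AR90, Corollary 3.3] for (1), [Liu03, Lemma 2.2] for (2), the argument of [Xi02, Theorem 4.1] for (3), and [Xi08, Lemma 3.3] for (4)). What you have written is a self-contained reconstruction of the arguments those references supply, and each step checks out. In (1), the identification of projective $A$-$A$-bimodules with direct summands of finite sums of $A\otimes_k A$ and the associativity computation $(A\otimes_k A)\otimes_A X\cong A\otimes_k X$ are exactly the right observations, and finite-dimensionality over $k$ keeps everything finitely generated. In (3), applying $-\otimes_A X$ to the defining bimodule isomorphisms is the standard argument. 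In (2), the generator argument correctly bootstraps from $N\otimes_B M\cong A\oplus P$ and $M\otimes_A N\cong B\oplus Q$ to all four one-sided generator statements, and the faithfulness deduction (either via the generator property or via (3)) is fine. Part (4) is the delicate one, and you handle it correctly: the chain $N\cong\mathrm{Hom}_B(M,-)|_{B}\cong{}^*M$ via uniqueness of adjoints and Eilenberg--Watts, then biduality for the f.g.~projective module $_AN$ to get $N\cong M^*$, then the symmetric chain for $N$, is precisely what is needed, and you correctly flag that the evaluation-at-$B$ step requires tracking naturality to recover the full bimodule structure. Your route buys the reader a transparent, self-contained argument where the paper asks them to assemble it from four different sources.
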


\begin{proof}
For (1), we refer to \cite[Corollary 3.3]{AR90}, and for (2), we refer to \cite[Lemma 2.2]{Liu03}. For (3), we refer to the argument in \cite[Theorem 4.1]{Xi02}, and (4) is immediate from \cite[Lemma 3.3]{Xi08}.
\end{proof}

By \cite[Definition 3.2]{Xi08}, if a stable equivalence of Morita type between two algebras $A$ and $B$ defined by two bimodules $_{B}M_{A}$ and $_{A}N_{B}$ satisfies that both $(M\otimes_{A}-, N\otimes_{B}-)$ and $(N\otimes_{B}-, M\otimes_{A}-)$ are adjoint pairs, then it is called a \emph{stable equivalence of adjoint type}. By \cite{Ric91}, any derived equivalence between self-injective algebras induces a stable equivalence of adjoint type. Actually, there are plenty examples of stable equivalences of adjoint type outside the scope of self-injective algebras; see for example \cite{DM07, LX07, Xi08}.

We have a specific example of Gorenstein stable Frobenius pair as follows.

\begin{proposition}\label{prop:AdjType}
Suppose that two bimodules  $_{B}M_{A}$ and $_{A}N_{B}$ define a stable equivalence of Morita type between algebras $A$ and $B$ with both $(M\otimes_{A}-, N\otimes_{B}-)$ and $(N\otimes_{B}-, M\otimes_{A}-)$ being adjoint pairs. Then the adjunction
$$\xymatrix{ T_{M}=M\otimes_{A}-: \mathcal{GP}^{<\infty}(A) \ar@<0.5ex>[r] &\mathcal{GP}^{<\infty}(B) : T_{N} = N\otimes_{B}- \ar@<0.5ex>[l] }$$
is a Quillen equivalence between the Gorenstein projective model categories over $A$ and $B$, and furthermore, it induces equivalences of triangulated categories
$$\underline{\mathcal{GP}}(A)\simeq \mathrm{Ho}(\mathcal{GP}^{<\infty}(A)) \simeq \mathrm{Ho}(\mathcal{GP}^{<\infty}(B))\simeq \underline{\mathcal{GP}}(B).$$
\end{proposition}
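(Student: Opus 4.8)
The plan is to invoke Theorem \ref{thm:FQE} directly: it suffices to check that the pair $(T_M, T_N)$ is a Frobenius pair of faithful functors satisfying one of the three equivalent conditions, and then the equivalences of homotopy categories come for free from that theorem together with Proposition \ref{prop:GPModel}. First I would record, via Lemma \ref{lem:facts}(4), that the hypothesis (both $(T_M,T_N)$ and $(T_N,T_M)$ are adjoint pairs) forces $M$ and $N$ to be Frobenius bimodules; hence $(T_M,T_N)$ is indeed a Frobenius pair between $A\mbox{-mod}$ and $B\mbox{-mod}$. Faithfulness of $T_M$ and $T_N$ is exactly Lemma \ref{lem:facts}(2), since $M_A$, ${}_AN$ etc.\ are projective generators. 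So the standing hypotheses of Theorem \ref{thm:FQE} are met, and everything reduces to verifying condition (2) of that theorem: for every $X\in\mathcal{GP}(A)$ and every $Y\in\mathcal{GP}^{<\infty}(B)$, the objects $\mathrm{Coker}(\eta_X)$ and $\mathrm{Ker}(\varepsilon_Y)$ have finite projective dimension.

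The key computation is to identify the unit and counit of this particular adjunction in terms of the bimodule decompositions. By Lemma \ref{lem:facts}(3) there are natural isomorphisms $T_N T_M\cong \mathrm{Id}_{A\text{-}\mathrm{mod}}\oplus T_P$ and $T_M T_N\cong \mathrm{Id}_{B\text{-}\mathrm{mod}}\oplus T_Q$, where $T_P=P\otimes_A-$ and $T_Q=Q\otimes_B-$ have image in projective modules by Lemma \ref{lem:facts}(1). The point is that under these isomorphisms the unit $\eta_X\colon X\to T_NT_M(X)$ becomes (up to isomorphism) the split inclusion $X\to X\oplus P\otimes_A X$, so that $\mathrm{Coker}(\eta_X)\cong P\otimes_A X$ is projective; similarly $\mathrm{Ker}(\varepsilon_Y)\cong Q\otimes_B Y$ is projective. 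In particular both cokernel and kernel are not merely of finite projective dimension but actually projective, which is stronger than condition (2) requires. I would spell out why the natural transformation from Lemma \ref{lem:facts}(3) is compatible with the unit/counit — this is the standard fact that when $N\otimes_B M\cong A\oplus P$ as bimodules, the composite $X\xrightarrow{\eta_X} N\otimes_B M\otimes_A X$ agrees, modulo the bimodule isomorphism, with $X\mapsto (x\mapsto(x,0))$ on the summand decomposition — and then the claim is immediate.

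Having checked condition (2), Theorem \ref{thm:FQE} gives at once that $(T_M,T_N)$ is a Quillen equivalence between $\mathcal{GP}^{<\infty}(A)$ and $\mathcal{GP}^{<\infty}(B)$, and also an adjoint pair of triangle equivalences $T_M\colon\underline{\mathcal{GP}}(A)\leftrightarrows\underline{\mathcal{GP}}(B)\colon T_N$. Combining this with the equivalences $\underline{\mathcal{GP}}(A)\simeq\mathrm{Ho}(\mathcal{GP}^{<\infty}(A))$ and $\underline{\mathcal{GP}}(B)\simeq\mathrm{Ho}(\mathcal{GP}^{<\infty}(B))$ of Proposition \ref{prop:GPModel} yields the displayed chain of equivalences of triangulated categories, completing the proof.

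The main obstacle is the bookkeeping in the second paragraph: one must be careful that the abstract natural isomorphism $T_NT_M\cong\mathrm{Id}\oplus T_P$ supplied by Lemma \ref{lem:facts}(3) really is the one induced by the bimodule splitting $N\otimes_B M\cong A\oplus P$, and that under this identification the adjunction unit corresponds to the canonical split monomorphism onto the $A$-summand. This is where the hypothesis that $(T_M,T_N)$ and $(T_N,T_M)$ are \emph{adjoint} pairs (not just that the functors are inverse up to projectives) is genuinely used — it pins down the unit and counit. Everything else is a routine application of the results already established in the excerpt.
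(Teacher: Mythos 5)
Your proof is correct and follows essentially the same route as the paper: use Lemma \ref{lem:facts} to see that $(T_M,T_N)$ is a Frobenius pair of exact, faithful functors, identify $\mathrm{Coker}(\eta_X)\cong P\otimes_A X$ and $\mathrm{Ker}(\varepsilon_Y)\cong Q\otimes_B Y$ as projective using the bimodule decompositions $N\otimes_B M\cong A\oplus P$ and $M\otimes_A N\cong B\oplus Q$, verify condition (2) of Theorem \ref{thm:FQE}, and then combine with Proposition \ref{prop:GPModel}. The one place you are slightly more explicit than the paper is in flagging that the natural isomorphism $T_NT_M\cong\mathrm{Id}\oplus T_P$ must be seen to be compatible with the adjunction unit so that $\mathrm{Coker}(\eta_X)$ really is the projective summand; the paper's proof asserts this identification without further comment, so your extra care is a small improvement in exposition rather than a difference in strategy.
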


\begin{proof}
By Lemma \ref{lem:facts}, $T_{M}: A\text{-mod} \rightleftarrows B\text{-mod}: T_{N}$ is a Frobenius pair of exact and faithful functors. For any $A$-module $X$, $T_{N}\circ T_{M}(X)\cong X\oplus  T_{P}(X)$, and then  the cokernel of the unit $\eta_X: X\longrightarrow T_{N}\circ T_{M}(X)$ is isomorphic to the projective $A$-module $T_{P}(X)$. Similarly, for any $B$-module $Y$, the kernel of the counit $\varepsilon_{Y}: T_{M}\circ T_{N}(Y)\rightarrow Y$ is isomorphic to the projective $B$-module $T_{Q}(Y)$. Then, the assertion follows immediately from Theorem \ref{thm:FQE}.
\end{proof}

By \cite[Corollary 5.5]{Ric91}, if two self-injective algebras are derived equivalent, then there are stable equivalence of Morita type. A typical example of self-injective algebras is the block algebras of group algebras, which is related to Brou\'{e}'s defect group conjecture. For any self-injective algebra $A$, it is well known that $A\mbox{-mod} = \mathcal{GP}(A)$. In this case,  $A\mbox{-mod}$ is a model category, where the subcategories of cofibrant and fibrant objects are both equal to $A\mbox{-mod}$, and the subcategory of trivial objects consists exactly of all finitely generated projective $A$-modules.

\begin{corollary}\label{cor:selfInj}
Let $A$ and $B$ be two finite dimensional self-injective algebras. If $A$ and $B$ are derived equivalent, then there is a Gorenstein stable Frobenius pair between $A$ and $B$. Moreover, the mutually inverse equivalence between the stable categories $A\mbox{-}\underline{\mathrm{mod}}$ and $B\mbox{-}\underline{\mathrm{mod}}$ is induced from a Quillen equivalence.
\end{corollary}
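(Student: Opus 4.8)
The plan is to feed Rickard's theorem on self-injective algebras into Proposition \ref{prop:AdjType}, and then to exploit the fact that self-injectivity collapses all the categories appearing there to the full module categories. First I would recall that, by \cite{Ric91}, a derived equivalence between the self-injective algebras $A$ and $B$ can be realized so that the induced stable equivalence of Morita type is of \emph{adjoint} type: there exist bimodules ${}_{B}M_{A}$ and ${}_{A}N_{B}$ defining a stable equivalence of Morita type between $A$ and $B$ for which both $(M\otimes_{A}-,\, N\otimes_{B}-)$ and $(N\otimes_{B}-,\, M\otimes_{A}-)$ are adjoint pairs. By Lemma \ref{lem:facts}(2) and (4), $M$ and $N$ are then Frobenius bimodules and $(T_{M}, T_{N})$ is a Frobenius pair of exact and faithful functors.

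Next I would apply Proposition \ref{prop:AdjType} verbatim: the adjunction $T_{M}\colon \mathcal{GP}^{<\infty}(A)\rightleftarrows \mathcal{GP}^{<\infty}(B)\colon T_{N}$ is a Quillen equivalence between the Gorenstein projective model categories of Proposition \ref{prop:GPModel}, so in particular it satisfies the equivalent conditions of Theorem \ref{thm:FQE}. By Definition \ref{def:G-staFroF}, this is precisely the assertion that $(T_{M}, T_{N})$ is a Gorenstein stable Frobenius pair between $A$ and $B$, which gives the first claim.

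For the second claim I would identify the categories. Since $A$ is self-injective, every finitely generated $A$-module is Gorenstein projective, hence $\mathcal{GP}(A)=A\text{-mod}$ and therefore $\mathcal{GP}^{<\infty}(A)=A\text{-mod}$; thus the Gorenstein projective model structure of Proposition \ref{prop:GPModel} lives on all of $A\text{-mod}$, with cofibrant and fibrant objects both equal to $A\text{-mod}$ and the trivial objects being exactly the projective $A$-modules, so that $\underline{\mathcal{GP}}(A)=A\text{-}\underline{\mathrm{mod}}$. The same holds for $B$. Substituting these identifications into Theorem \ref{thm:FQE}(3) (equivalently, into the last display of Proposition \ref{prop:AdjType}) yields the adjoint pair of mutually inverse triangle equivalences $T_{M}\colon A\text{-}\underline{\mathrm{mod}}\rightleftarrows B\text{-}\underline{\mathrm{mod}}\colon T_{N}$, and by construction this equivalence is the one induced by the Quillen equivalence established above.

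The only genuinely nontrivial ingredient is the passage from a derived equivalence of self-injective algebras to a stable equivalence of \emph{adjoint} type (i.e.\ that the two tensor functors can be chosen biadjoint); I would cite \cite{Ric91} and the discussion preceding Proposition \ref{prop:AdjType} for this rather than reproving it. Everything after that is bookkeeping about the degenerate shape forced by self-injectivity, namely $\mathcal{GP}^{<\infty}(A)=A\text{-mod}$ and $\underline{\mathcal{GP}}(A)=A\text{-}\underline{\mathrm{mod}}$.
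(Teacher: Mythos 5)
Your proposal matches the paper's (implicit) argument exactly: invoke Rickard's result that a derived equivalence between self-injective algebras induces a stable equivalence of Morita type of adjoint type, feed this into Proposition~\ref{prop:AdjType}, and then use the standard identification $\mathcal{GP}(A)=A\text{-mod}$ (hence $\underline{\mathcal{GP}}(A)=A\text{-}\underline{\mathrm{mod}}$) for a self-injective algebra to collapse the Gorenstein projective model structure and the Gorenstein stable category to their classical counterparts. The paper states the corollary without a separate proof precisely because the two paragraphs preceding it carry out this bookkeeping; your write-up is a correct and somewhat more explicit version of the same reasoning.
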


\begin{corollary}\label{cor:SSFroExt}
If $B$ and $\Lambda$ are algebras such that their semisimple quotients are separable and if at least one of them is indecomposable. Then there is a $k$-algebra $A$, Morita equivalent to $\Lambda$, such that $B\rightarrow A$ is a split and separable Frobenius extension. Moreover, there is a stable equivalence of Morita type between $A$ and $B$ with $(Res, F)$ and $(F, Res)$ being adjoint pairs, where $F = A\otimes_{B}-$ is the induction functor.
In this case, $(Res, F)$ is a Quillen equivalence which induces the equivalence $\underline{\mathcal{GP}}(A)\simeq \underline{\mathcal{GP}}(B)$ of stable categories.
\end{corollary}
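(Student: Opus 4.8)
The plan is to obtain this corollary by feeding a classical embedding theorem into Proposition \ref{prop:AdjType}. First I would quote the construction of \cite{DM07} (see also \cite{Xi08}): since the semisimple quotients of $B$ and $\Lambda$ are separable and at least one of $B$, $\Lambda$ is indecomposable, there exist a $k$-algebra $A$ that is Morita equivalent to $\Lambda$ and an inclusion of algebras $B\hookrightarrow A$ which is a split separable Frobenius extension; moreover, by \cite{DM07} such an extension produces a stable equivalence of Morita type between $A$ and $B$, defined by the bimodules ${}_{A}A_{B}$ and ${}_{B}A_{A}$. Here separability of $B\hookrightarrow A$ makes the multiplication $A\otimes_{B}A\to A$ split as $A$-$A$-bimodules, so $A\otimes_{B}A\cong A\oplus P$ with $P$ a projective $A$-$A$-bimodule, while the split and Frobenius conditions give $A\cong B\oplus Q$ as $B$-$B$-bimodules with $Q$ a projective $B$-$B$-bimodule, which are exactly the requirements of Definition \ref{def: StaEMT}.

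Next I would record the adjointness built into a Frobenius extension. Since $B\hookrightarrow A$ is a Frobenius extension, the induction functor $F=A\otimes_{B}-$ is naturally isomorphic to the coinduction functor $\mathrm{Hom}_{B}(A,-)$ (see \cite{Kas61, CIGTN99}), so both $(F,\mathrm{Res})$ and $(\mathrm{Res},F)$ are adjoint pairs, i.e. $(F,\mathrm{Res})$ is a Frobenius pair of functors between $A\text{-mod}$ and $B\text{-mod}$ in the sense of Section 3; both $F$ and $\mathrm{Res}$ are exact because $A$ is finitely generated projective as a left and as a right $B$-module, and the faithfulness needed below is automatic from Lemma \ref{lem:facts}, since ${}_{B}A_{A}$ and ${}_{A}A_{B}$ are projective generators on both sides.

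I would then match the data with Proposition \ref{prop:AdjType}. Taking $M={}_{B}A_{A}$ and $N={}_{A}A_{B}$, we have $T_{M}=M\otimes_{A}-=\mathrm{Res}\colon A\text{-mod}\to B\text{-mod}$ and $T_{N}=N\otimes_{B}-=F\colon B\text{-mod}\to A\text{-mod}$; the bimodules $M,N$ define the stable equivalence of Morita type of the first step, and $(T_{M},T_{N})$, $(T_{N},T_{M})$ are the two adjoint pairs of the second step. Hence Proposition \ref{prop:AdjType} applies and shows that $(\mathrm{Res},F)\colon \mathcal{GP}^{<\infty}(A)\rightleftarrows\mathcal{GP}^{<\infty}(B)$ is a Quillen equivalence between the Gorenstein projective model categories over $A$ and $B$, and that it induces the chain of triangle equivalences $\underline{\mathcal{GP}}(A)\simeq\mathrm{Ho}(\mathcal{GP}^{<\infty}(A))\simeq\mathrm{Ho}(\mathcal{GP}^{<\infty}(B))\simeq\underline{\mathcal{GP}}(B)$, which is exactly the assertion.

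The one genuinely external ingredient is the embedding theorem producing $A$ and the split separable Frobenius extension $B\hookrightarrow A$; this is the heart of the statement, and I would simply quote it rather than reprove it. Everything else is bookkeeping: checking exactness, identifying the bimodules, and — the only point requiring any care — getting the direction of the Quillen adjunction right, namely that it is $(\mathrm{Res},F)$ with $\mathrm{Res}$ the left Quillen functor, which is dictated by the identification $T_{M}=\mathrm{Res}$, $T_{N}=F$ under which Proposition \ref{prop:AdjType} is invoked.
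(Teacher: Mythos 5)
Your proposal is correct and follows essentially the same route as the paper: both invoke the embedding theorem of Dugas--Mart\'inez-Villa (\cite[Corollary 5.1]{DM07}) to produce the algebra $A$ and the split separable Frobenius extension $B\rightarrow A$ giving a stable equivalence of Morita type via the bimodules ${}_{B}A_{A}$ and ${}_{A}A_{B}$, and then apply Proposition \ref{prop:AdjType} with $T_M=\mathrm{Res}$ and $T_N=F$ to obtain the Quillen equivalence and the equivalence $\underline{\mathcal{GP}}(A)\simeq\underline{\mathcal{GP}}(B)$. The only cosmetic difference is the order of derivation in the first step (the paper reads the bimodule decompositions off of \cite{DM07} and then deduces the split separable Frobenius extension via \cite{HX18}, whereas you start from the extension and unwind the decompositions), but the content is identical.
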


\begin{proof}
It follows from \cite[Corollary 5.1]{DM07} that there is a $k$-algebra $A$ Morita equivalent to $\Lambda$, and an injective ring homomorphism $B\rightarrow A$, such that
$A\otimes_{B} A \cong A\oplus P$ as $A$-$A$-bimodules for some projective $A$-$A$-bimodule $P$, and $A\cong B\oplus Q$ as $B$-$B$-bimodules for some projective $B$-$B$-bimodule $Q$. This means that $_{B}A_{A}$ and $_{A}A_{B}$ define a stable equivalence of Morita type between $A$ and $B$. Hence, $B\rightarrow A$ is a split and separable Frobenius extension; see for example \cite[pp. 96]{HX18}. By Proposition \ref{prop:AdjType}, we get the adjunction of equivalences
$\xymatrix{ Res: \underline{\mathcal{GP}}(A) \ar@<0.5ex>[r] &\underline{\mathcal{GP}}(B) : F \ar@<0.5ex>[l] }$.
\end{proof}

There are some invariants preserved under stable equivalences of Morita type. For example, it follows from \cite{Xi08} that if there is a stable equivalence of Morita type between algebras $A$ and $B$, then the Hochschild cohomology groups and the absolute values of Cartan determinants of algebras $A$ and $B$ are invariant; moreover, it follows from \cite[Theorem 1.3]{LX07} that the self-injective dimensions of $A$ and $B$ are equal.

We denote by $\mathrm{spli}(A)$ and $\mathrm{silp}(A)$ the supremum of the projective lengths (dimensions) of injective left $A$-modules, and the supremum of the injective lengths (dimensions) of projective left $A$-modules, respectively. The left finitistic dimension $\mathrm{fin.dim}(A)$ of $A$ is defined as the supremum of the projective dimensions of those left $A$-modules that have finite projective dimension. The global Gorenstein dimension of $A$ is denoted by $\mathrm{Ggldim}(A)$. The algebra $A$ is {\em Gorenstein} if  $\mathrm{Ggldim}(A)$ is finite.

We have the following invariants, which are preserved by stable equivalences of Morita type.

\begin{corollary}\label{cor:invAdjType}
Suppose that two bimodules  $_{B}M_{A}$ and $_{A}N_{B}$ define a stable equivalence of Morita type between algebras $A$ and $B$, such that both $(M\otimes_{A}-, N\otimes_{B}-)$ and $(N\otimes_{B}-, M\otimes_{A}-)$ are adjoint pairs.
\begin{enumerate}
\item For any $i\geq 0$, $K_i^G(A)\cong K_i^G(B)$.
\item $A$ is CM-free if and only if $B$ is CM-free. If $A$ and $B$ are Artin algebras, then $A$ is CM-finite if and only if $B$ is CM-finite.
\item $A$ is Gorenstein if and only if $B$ is Gorenstein. In this case, there are equalities
$$\mathrm{Ggldim}(A) = \mathrm{spli}(A) = \mathrm{silp}(A)=\mathrm{fin.dim}(A)\\
 = \mathrm{fin.dim}(B) = \mathrm{silp}(B) = \mathrm{spli}(B)  = \mathrm{Ggldim}(B).$$
\end{enumerate}
\end{corollary}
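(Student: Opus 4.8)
The plan is to deduce all three statements from Proposition \ref{prop:AdjType} together with the machinery already developed. Under the stated hypotheses, Lemma \ref{lem:facts}(2) and (4) guarantee that ${}_{B}M_{A}$ and ${}_{A}N_{B}$ are Frobenius bimodules which are projective generators on both sides, so Proposition \ref{prop:AdjType} applies: $(T_{M}, T_{N}) = (M\otimes_{A}-, N\otimes_{B}-)$ is a Gorenstein stable Frobenius pair between $A$ and $B$ in the sense of Definition \ref{def:G-staFroF}. Granting this, statement (1) is immediate from Proposition \ref{prop:StaK-group}, and statement (2) is immediate from Proposition \ref{prop:CM-inv}; no further work is needed for these.

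For statement (3) I would first establish the equivalence ``$A$ is Gorenstein $\iff$ $B$ is Gorenstein''. Since ${}_{B}M$ and $M_{A}$ are generators and $M$ is a Frobenius bimodule (Lemma \ref{lem:facts}), Lemma \ref{lem:GpMod}(2) gives that $A$ is Gorenstein regular if and only if $B$ is Gorenstein regular; as $A$ and $B$ are finite-dimensional, ``Gorenstein regular'' is the same as ``$\mathrm{Ggldim}$ finite'', i.e. as being Gorenstein, which yields the equivalence. (One could instead argue through Corollary \ref{cor:SinEqu} using the equivalence $\underline{\mathcal{GP}}(A)\simeq\underline{\mathcal{GP}}(B)$ supplied by Proposition \ref{prop:AdjType}, but the bimodule route via Lemma \ref{lem:GpMod} is more direct.)

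It then remains to verify the displayed chain of equalities under the assumption that $A$, and hence $B$, is Gorenstein. The equalities confined to a single algebra,
$$\mathrm{Ggldim}(A) = \mathrm{spli}(A) = \mathrm{silp}(A) = \mathrm{fin.dim}(A),$$
are standard facts for Gorenstein algebras: $\mathrm{spli}(A) = \mathrm{silp}(A) = \mathrm{Ggldim}(A)$ is part of the characterization of Gorenstein categories recalled in Section 2 (see \cite[Proposition VII 1.3]{BR07} and \cite[Theorem A.6]{CR20}), and $\mathrm{fin.dim}(A) = \mathrm{Ggldim}(A)$ is the classical statement that over a Gorenstein algebra the finitistic dimension agrees with the self-injective dimension; the same holds for $B$. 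The only link not of this intrinsic nature is the ``bridge'' equality $\mathrm{fin.dim}(A) = \mathrm{fin.dim}(B)$. For this I would use that $\mathrm{silp}(A)$ equals the self-injective dimension $\mathrm{id}({}_{A}A)$ --- valid because $A$ is a projective generator over the Noetherian ring $A$ --- and invoke \cite[Theorem 1.3]{LX07}, which says that a stable equivalence of Morita type preserves the self-injective dimension; thus $\mathrm{id}({}_{A}A) = \mathrm{id}({}_{B}B)$, hence $\mathrm{silp}(A) = \mathrm{silp}(B)$, and concatenating with the single-algebra equalities closes the chain.

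The main obstacle is exactly this bridge: parts (1)--(2) and the ``$A$ Gorenstein $\iff$ $B$ Gorenstein'' equivalence are packaged cleanly by the earlier results, and the numerical equalities inside a fixed algebra are formal, but matching the invariants of $A$ with those of $B$ is not something the Quillen-equivalence formalism of Section 3 yields on its own; it relies on the external input that stable equivalences of Morita type transport self-injective dimension.
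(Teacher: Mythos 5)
Your handling of parts (1) and (2) matches the paper exactly: both reduce to Propositions \ref{prop:StaK-group} and \ref{prop:CM-inv} after noting, via Lemma \ref{lem:facts} and Proposition \ref{prop:AdjType}, that $(T_M,T_N)$ is a Gorenstein stable Frobenius pair.

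For part (3) you take a genuinely different route from the paper, and it works, but it is worth seeing where the two diverge. The paper simply cites \cite[Theorem 4.1]{Emm12} for the single-algebra chain $\mathrm{Ggldim}=\mathrm{spli}=\mathrm{silp}=\mathrm{fin.dim}$ together with \cite[Corollary 3.3, Theorem A.6]{CR20}, where the cited Corollary 3.3 is the numerical statement that a faithful Frobenius pair forces $\mathrm{Ggldim}(A)=\mathrm{Ggldim}(B)$; that one equality already closes the bridge you were worried about, and the rest follows from Emmanouil's theorem applied to each algebra separately. You instead get the ``$A$ Gorenstein $\iff$ $B$ Gorenstein'' via Lemma \ref{lem:GpMod}(2) (fine), and close the bridge by identifying $\mathrm{silp}$ with the self-injective dimension of the regular module over a Noetherian ring and invoking the external input \cite[Theorem 1.3]{LX07} that stable equivalences of Morita type preserve self-injective dimensions. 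That is correct and is even flagged in the paper's preamble to this corollary, but you do not actually need an outside theorem: since $T_M$, $T_N$ are exact, preserve Gorenstein projectives (Lemma \ref{lem:GpMod}), and $T_N T_M(X)\cong X\oplus T_P(X)$ with $T_P(X)$ projective (Lemma \ref{lem:facts}(1),(3)), one gets $\mathrm{Gpd}_A(X)\leq \mathrm{Gpd}_A(T_NT_M X)\leq \mathrm{Gpd}_B(T_M X)\leq \mathrm{Gpd}_A(X)$, hence $\mathrm{Ggldim}(A)=\mathrm{Ggldim}(B)$ directly from the Frobenius machinery, which is the content of the CR20 citation. So your proof is valid; the paper's route is shorter and stays entirely within the tools already set up, while yours imports a known invariance result from \cite{LX07} and therefore requires the Noetherian/Artin identification $\mathrm{silp}(A)=\mathrm{id}({}_AA)$ as an extra (harmless) step.
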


\begin{proof}
The statement (1) is from Proposition \ref{prop:StaK-group}, (2) is from Proposition \ref{prop:CM-inv}, and (3) follows by \cite[Theorem 4.1]{Emm12} together with  \cite[Corollary 3.3, Theorem A.6]{CR20}.
\end{proof}

\begin{remark}
Analogous to stable equivalence of Morita type, Chen and Sun introduced in their unpublished manuscript a notion of \emph{singular equivalence of Morita type}, by replacing the isomorphisms by $N\otimes_{B} M \cong A\oplus P'$ and $M\otimes_{A} N\cong B\oplus Q'$, where $_{A}P'_{A}$ and $_{B}Q'_{B}$ are bimodule with finite projective dimensions; see for example \cite[Definition 2.1]{ZZ13}. Under mild conditions, $(M\otimes_{A}-, N\otimes_{B}-)$ is also a Frobenius pair of faithful functors. Moreover, for any $A$-module $X$ and any $B$-module $Y$, the modules $\mathrm{Coker}(\eta_X)\cong P'\otimes_{A}X$ and $\mathrm{Ker}(\varepsilon_Y)\cong Q'\otimes_{B}Y$ are of finite projective dimension. Hence, the equivalent conditions in Theorem \ref{thm:FQE} are satisfied, and furthermore, the above results also hold.
\end{remark}

\section{\bf Examples}

In this section, we illustrate the aforementioned results by the following two examples.

\begin{example}\label{eg1}
Let $A$ and $B$ be algebras given by the following quivers with the relations, respectively:
\begin{center}$\xymatrix{ \bullet\ar@<0.5ex>[r]^{\alpha} &\bullet \ar@<0.5ex>[l]^{\beta} }$, $\beta\alpha\beta\alpha = 0$; \end{center}
\begin{center} $\xymatrix{ \bullet\ar@<0.5ex>[r]^{x} &\bullet \ar@<0.5ex>[l]^{y} \ar@(ur,dr)^{z} }$,  $yx = zx = yz = z^2 - xy = 0$. \end{center}

It follows from  \cite[Example, pp.583]{LX07} that there is a stable equivalence of adjoint type between $A$ and $B$. Note that $A$ is a Gorenstein algebra whose self-injective dimension on both sides is 2, then both $A$-mod and $B$-mod are Gorenstein categories. By Proposition \ref{prop:AdjType}, considering the Gorenstein projective model structures, there is a Quillen equivalence between $A$-mod and $B$-mod, which also induces an equivalence between the stable categories of finitely generated Gorenstein projective modules over $A$ and $B$. In the sense of Definition \ref{def:G-staFroF}, we say that there is a Gorenstein stable Frobenius pair between $A$ and $B$.
\end{example}

We refer to \cite{CSZ18} for the details on the notions of perfect pairs and perfect paths for monomial algebras. It follows from \cite[Theorem 4.1]{CSZ18} that $A\beta\alpha$ is the only finitely generated indecomposable non-projective Gorenstein projective $A$-module, which is corresponding to the unique perfect path $\beta\alpha$ in $A$. By \cite[Theorem 5.7]{CSZ18}, it follows that the stable category $\underline{\mathcal{GP}}(A)$ is equivalent to the semisimple triangulated category $\mathcal{T}_{1}$. We refer to \cite{CSZ18} for the definition of $\mathcal{T}_1$.

\begin{proposition}\label{prop:invEg1}
For the algebras $A$ and $B$ in Example \ref{eg1}, we have the following:
\begin{enumerate}
\item Both $A$ and $B$ are Gorenstein algebras, and moreover, $\mathrm{Ggldim}$, $\mathrm{spli}$, $\mathrm{silp}$ and $\mathrm{fin.dim}$ of both $A$ and $B$ are equal to 2.
\item Both $A$ and $B$ are CM-finite, and moreover, $\underline{\mathcal{GP}}(A)\simeq\mathcal{T}_{1} \simeq \underline{\mathcal{GP}}(B)$.
\item For any $i\geq 0$, $K_i^G(A)\cong K_i^G(B)$.
\end{enumerate}
\end{proposition}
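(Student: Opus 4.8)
The plan is to prove each of the three items by invoking the general machinery established earlier in the paper, applied to the stable equivalence of adjoint type between $A$ and $B$ recalled in Example \ref{eg1}. The key preliminary observation is that the stable equivalence of adjoint type given by \cite{LX07} supplies bimodules $_{B}M_{A}$ and $_{A}N_{B}$ defining a stable equivalence of Morita type for which both $(M\otimes_{A}-, N\otimes_{B}-)$ and $(N\otimes_{B}-, M\otimes_{A}-)$ are adjoint pairs; hence Corollary \ref{cor:invAdjType} applies verbatim, and all three parts reduce to verifying the data for $A$.

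For part (1), I would first record that $A$ is Gorenstein: since $A$ is the monomial (string) algebra $k\xymatrix@C=14pt{\bullet\ar@<0.3ex>[r]^{\alpha}&\bullet\ar@<0.3ex>[l]^{\beta}}$ with relation $\beta\alpha\beta\alpha=0$, a direct computation of projective resolutions of the two indecomposable injectives (equivalently, using that the self-injective dimension on both sides is $2$, as stated in Example \ref{eg1}) shows $\mathrm{spli}(A)=\mathrm{silp}(A)=2$, so $\mathrm{Ggldim}(A)=2$ by the chain of equalities in Corollary \ref{cor:invAdjType}(3), which also forces $\mathrm{fin.dim}(A)=2$. Then Corollary \ref{cor:invAdjType}(3) transports every one of these numbers to $B$, giving the claimed common value $2$. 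The only genuine content here is the single computation $\mathrm{Ggldim}(A)=2$; everything else is quotation.

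For part (2), I would invoke the description of Gorenstein projective modules over monomial algebras from \cite{CSZ18}: the perfect paths of $A$ are computed to be exactly $\{\beta\alpha\}$, so by \cite[Theorem 4.1]{CSZ18} the only indecomposable non-projective finitely generated Gorenstein projective $A$-module is $A\beta\alpha$, whence $A$ is CM-finite with $\mathcal{GP}(A)=\mathrm{add}(A\oplus A\beta\alpha)$; moreover \cite[Theorem 5.7]{CSZ18} identifies $\underline{\mathcal{GP}}(A)\simeq\mathcal{T}_{1}$. Since Corollary \ref{cor:invAdjType}(2) says CM-finiteness is preserved (both algebras being finite-dimensional, hence Artin), $B$ is CM-finite too, and the triangle equivalence $\underline{\mathcal{GP}}(A)\simeq\underline{\mathcal{GP}}(B)$ from Proposition \ref{prop:AdjType} (or Theorem \ref{thm:FQE}(3)) gives the chain $\underline{\mathcal{GP}}(A)\simeq\mathcal{T}_{1}\simeq\underline{\mathcal{GP}}(B)$. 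Part (3) is then immediate from Corollary \ref{cor:invAdjType}(1) (equivalently Proposition \ref{prop:StaK-group}), which yields $K_i^G(A)\cong K_i^G(B)$ for all $i\geq 0$.

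The main obstacle is not conceptual but bookkeeping: one must confirm that the stable equivalence of \cite[Example, pp.\ 583]{LX07} is genuinely of \emph{adjoint} type (so that Proposition \ref{prop:AdjType} and Corollary \ref{cor:invAdjType} are applicable), and one must correctly carry out the two small explicit computations --- the Gorenstein/injective-dimension calculation for $A$ and the perfect-path calculation identifying $A\beta\alpha$ as the unique non-projective indecomposable Gorenstein projective. Both are routine for these tiny quiver algebras, so once they are in place the proposition follows by assembling Corollary \ref{cor:invAdjType}, Proposition \ref{prop:AdjType}, and the results of \cite{CSZ18}.
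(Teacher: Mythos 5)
Your proposal is correct and follows exactly the route the paper takes: apply Corollary \ref{cor:invAdjType} (equivalently Proposition \ref{prop:AdjType} and Proposition \ref{prop:StaK-group}) to the stable equivalence of adjoint type from \cite{LX07}, then feed in the explicit computation for $A$ --- self-injective dimension $2$ for part (1), and the unique perfect path $\beta\alpha$ from \cite[Theorems 4.1, 5.7]{CSZ18} for part (2). The paper does not spell out a separate proof environment, but the surrounding discussion preceding the proposition contains precisely these ingredients.
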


The following implies that for the algebras $A$ and $B$ in Example \ref{eg1}, $K_0^G(A)\cong \mathbb{Z}\cong K_0^G(B)$.

\begin{proposition}\label{prop:calK0}
If there is only one finitely generated indecomposable non-projective Gorenstein projective module over $A$, then $K_0^G(A)\cong \mathbb{Z}$.
\end{proposition}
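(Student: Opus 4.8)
The plan is to identify $K_0^G(A)$ with the Grothendieck group of the stable category $\underline{\mathcal{GP}}(A)$ (which is legitimate by Definition \ref{def:K0} together with Lemma \ref{lem:WEqu}) and then compute this Grothendieck group directly from the hypothesis that there is exactly one indecomposable non-projective finitely generated Gorenstein projective module, call it $G_0$. By the Krull--Schmidt property of $A\mbox{-mod}$, every $X\in\mathcal{GP}(A)$ decomposes as $X\cong P\oplus G_0^{n}$ with $P$ projective and $n\geq 0$; in the stable category $P$ becomes zero, so $\underline{\mathcal{GP}}(A)$ has, up to isomorphism, objects exactly the $G_0^{n}$. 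The class $[G_0]$ therefore generates $K_0^G(A)$ as an abelian group, giving a surjection $\mathbb{Z}\twoheadrightarrow K_0^G(A)$, $1\mapsto[G_0]$.

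First I would verify the reduction to the stable category: by Definition \ref{def:K0}, $K_0^G(A)$ is the free abelian group on weak-equivalence classes $[X]$ of objects of $\mathcal{GP}(A)$ modulo $[Y]=[X]+[Z]$ for each cofibration sequence $X\rightarrowtail Y\twoheadrightarrow Z$; by Lemma \ref{lem:WEqu}, weak-equivalence classes coincide with isomorphism classes in $\underline{\mathcal{GP}}(A)$, and cofibration sequences in $\mathcal{GP}(A)$ are precisely the admissible short exact sequences, which become the standard triangles in $\underline{\mathcal{GP}}(A)$. In particular $[P]=0$ for $P$ projective (take $0\rightarrowtail P\twoheadrightarrow P$, or note $P$ is a zero object of $\underline{\mathcal{GP}}(A)$), so $K_0^G(A)$ is a quotient of $\mathbb{Z}[G_0]\cong\mathbb{Z}$.

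The substantive point is injectivity of $\mathbb{Z}\twoheadrightarrow K_0^G(A)$, i.e.\ that $[G_0]$ has infinite order; equivalently, I must produce a well-defined additive invariant $\mathcal{GP}(A)\to\mathbb{Z}$ sending $G_0\mapsto 1$ and vanishing on projectives and respecting short exact sequences. I would use the additive function
$$
X\longmapsto n(X),\qquad\text{where } X\cong P\oplus G_0^{\,n(X)} \text{ with } P\in\mathcal{P}(A),
$$
which is well defined by Krull--Schmidt. Additivity on a short exact sequence $0\to X\to Y\to Z\to 0$ of Gorenstein projectives requires $n(Y)=n(X)+n(Z)$; this follows because such a sequence splits after adding a projective summand — indeed $Z$ is Gorenstein projective, so $\mathrm{Ext}^1_A(Z,X)$ for $X$ Gorenstein projective need not vanish, so instead I would pass to the stable category: the triangle $X\to Y\to Z\to X[1]$ in the semisimple or at least Krull--Schmidt triangulated category $\underline{\mathcal{GP}}(A)$ splits (every object is a sum of copies of $G_0$, and one checks $Y\cong X\oplus Z$ in $\underline{\mathcal{GP}}(A)$ by a standard argument, since $\underline{\mathrm{End}}$ is local-ish on $G_0$), whence $n(Y)=n(X)+n(Z)$. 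This gives a homomorphism $K_0^G(A)\to\mathbb{Z}$ inverse to $1\mapsto[G_0]$, and the isomorphism $K_0^G(A)\cong\mathbb{Z}$ follows.

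The main obstacle is the additivity/splitting step: one must argue carefully that a short exact sequence with all three terms in $\mathcal{GP}(A)$ becomes a split triangle in $\underline{\mathcal{GP}}(A)$ under the one-indecomposable hypothesis, rather than merely that the ambient Grothendieck-group relation is consistent. If a direct splitting argument is awkward, an alternative is to invoke Theorem \ref{thm:CharK1}-style reasoning at the level of $K_0$: if in addition $\mathcal{GP}(A)=\mathrm{add}\,G$ with $G=G_0$ (automatic here, taking $G=A\oplus G_0$), then $\underline{\mathcal{GP}}(A)\simeq\mathrm{add}$ of a single object over the stable endomorphism ring $\Lambda=\underline{\mathrm{End}}(G_0)$, and $K_0^G(A)\cong K_0(\Lambda)$ of a (possibly non-trivial) ring; one then observes $K_0(\Lambda)\cong\mathbb{Z}$ because the relevant module category has a single indecomposable up to the splitting of stable triangles. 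I expect the short, self-contained Krull--Schmidt argument above to suffice, with the splitting lemma being the only place needing genuine care.
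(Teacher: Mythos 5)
Your strategy mirrors the paper's: use Krull--Schmidt to see that every object of $\mathcal{GP}(A)$ is $P\oplus G_0^{\,n(X)}$ with $P$ projective, conclude that $[G_0]$ generates $K_0^G(A)$, and try to show the rank $n(-)$ descends to an inverse homomorphism $K_0^G(A)\to\mathbb{Z}$. The step you correctly flag as the only one ``needing genuine care'' --- that every cofibration sequence in $\mathcal{GP}(A)$ becomes a split triangle in $\underline{\mathcal{GP}}(A)$ --- is false. Having a single indecomposable object in a Krull--Schmidt triangulated category does not force triangles to split. Consider the cofibration sequence $G_0\rightarrowtail P\twoheadrightarrow \Omega^{-1}G_0$ in $\mathcal{GP}(A)$, where $G_0\hookrightarrow P$ is an embedding into a projective (equivalently injective) object of the Frobenius category. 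The cosyzygy $\Omega^{-1}G_0$ is finitely generated Gorenstein projective and cannot be projective (otherwise $G_0$ would be a summand of $P$ and hence projective), so by hypothesis $\Omega^{-1}G_0\cong G_0^{k}\oplus Q$ with $k\geq 1$ and $Q$ projective. In $\underline{\mathcal{GP}}(A)$ the corresponding triangle is $G_0\to 0\to G_0[1]\xrightarrow{\ \cong\ }G_0[1]$, which manifestly does not split, and the rank fails to be additive: $n(G_0)+n(\Omega^{-1}G_0)=1+k\geq 2$, whereas $n(P)=0$.

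In fact, applying the cofibration-sequence relation of Definition \ref{def:K0} to this sequence yields $0=[P]=[G_0]+[\Omega^{-1}G_0]=(k+1)[G_0]$, so $[G_0]$ is torsion of order dividing $k+1\geq 2$; since $[G_0]$ generates, $K_0^G(A)$ is a finite cyclic group, not $\mathbb{Z}$. You should be aware that the paper's own proof has the same gap: it verifies only that $n(-)$ is constant on weak-equivalence classes (a Krull--Schmidt consequence of Lemma \ref{lem:WEqu}) but never checks compatibility with the cofibration-sequence relation, which is exactly where the argument collapses. This is consistent with the equivalence $\underline{\mathcal{GP}}(A)\simeq\mathcal{T}_1$ asserted in Proposition \ref{prop:invEg1}: in $\mathcal{T}_1$ the triangle $k\to 0\to k\xrightarrow{\ \mathrm{id}\ }k$ forces $2[k]=0$, so $K_0(\mathcal{T}_1)\cong\mathbb{Z}/2$. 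Your alternative suggestion of reducing to $K_0(\Lambda)$ via Theorem \ref{thm:CharK1}-style reasoning would not rescue the claim either, because the Waldhausen cofibration sequences in $\mathcal{GP}(A)$ do not correspond to split sequences of projective $\Lambda$-modules.
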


\begin{proof}
Assume that $G$ is the only finitely generated indecomposable non-projective Gorenstein projective $A$-module, then $\mathcal{GP}(A) = \mathrm{add}G$. Let $X$ be any finitely generated Gorenstein projective module, and assume that $X\oplus Y\cong G^n$ for some $A$-module $Y$. Since $G$ is indecomposable, $X$ is isomorphic to $G^m$ for some $m\leq n$.

It suffices to prove that the rank of $X$ is well-defined. Now assume that $X$ is weakly equivalent to both $G^m$ and $G^n$, where $m\leq n$. There is a cofibration sequence $G^m\rightarrowtail G^n\twoheadrightarrow G^{n-m}$. Since $G^m$ is weakly equivalent to $G^n$, $G^{n-m}$ should be a projective module. This is impossible by hypothesis if $m\neq n$. Hence, the rank induces an isomorphism  $K_0(A)\cong \mathbb{Z}$.
\end{proof}

\begin{proposition}\label{prop:calK1}
Let $A$ be the $k$-algebra in Example \ref{eg1}, $G = A\beta\alpha$. Then $\underline{\mathrm{End}}(G) \cong k$. Moreover, $K_1^G(B)\cong K_1^G(A) \cong k^{\times}$, where $k^{\times}$ is the group of units of $k$.
\end{proposition}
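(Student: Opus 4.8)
The plan is to compute the stable endomorphism ring of $G = A\beta\alpha$ directly, and then invoke Theorem \ref{thm:CharK1} to identify $K_1^G(A)$, together with Proposition \ref{prop:invEg1} (or Proposition \ref{prop:StaK-group}) to transport the result to $B$. First I would recall from the discussion preceding Proposition \ref{prop:invEg1} that $\mathcal{GP}(A) = \mathrm{add}\,G$ with $G = A\beta\alpha$ the unique indecomposable non-projective Gorenstein projective $A$-module, so the hypotheses of Theorem \ref{thm:CharK1} are met with this $G$. The main computational task is then to show $\Lambda := \underline{\mathrm{End}}(G) \cong k$.

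To compute $\underline{\mathrm{End}}(G)$, I would first determine $\mathrm{End}_A(G)$, the honest endomorphism ring in $A\text{-mod}$. The module $G = A\beta\alpha$ is a cyclic submodule of the indecomposable projective at the appropriate vertex; since it is indecomposable with local endomorphism ring (being a finite-dimensional indecomposable), $\mathrm{End}_A(G)$ is a local $k$-algebra, and a short dimension count of $G$ and its radical structure (using $\beta\alpha\beta\alpha = 0$) shows $G$ has a simple top and simple socle, forcing $\mathrm{End}_A(G)$ to be either $k$ or $k[\epsilon]/(\epsilon^2)$ depending on whether a nonzero nilpotent endomorphism exists. The relevant point is that any endomorphism factoring through a projective is killed in the stable category; I would argue that the radical of $\mathrm{End}_A(G)$ (the non-isomorphisms) consists precisely of maps factoring through projectives, so that passing to $\underline{\mathrm{End}}(G)$ kills the radical and leaves $\mathrm{End}_A(G)/\mathrm{rad} \cong k$. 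Concretely: $G \hookrightarrow P$ for $P$ the projective cover, and any non-invertible endomorphism of $G$ has image inside $\mathrm{rad}\,G = A\beta\alpha\beta\alpha \cdot(\cdots)$ — which because of the relation is a simple (hence projective-after-a-shift, or directly factoring) module — and thus factors through a projective. This yields $\Lambda \cong k$.

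Having established $\Lambda \cong k$, Theorem \ref{thm:CharK1} gives $K_1^G(A) \cong K_1(\Lambda) = K_1(k)$. Since $k$ is a field, $K_1(k) = GL(k)/E(k) \cong k^{\times}$ via the determinant (Whitehead's lemma, as recalled just before the statement). Therefore $K_1^G(A) \cong k^{\times}$. Finally, by Proposition \ref{prop:invEg1}(3) (equivalently, by Proposition \ref{prop:AdjType} together with Proposition \ref{prop:StaK-group}), the stable equivalence of Morita type between $A$ and $B$ gives $K_1^G(B) \cong K_1^G(A)$, so $K_1^G(B) \cong k^{\times}$ as well.

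I expect the only real obstacle to be the verification that every non-isomorphism in $\mathrm{End}_A(G)$ factors through a projective $A$-module — i.e., pinning down the structure of $G = A\beta\alpha$ precisely enough (its composition series, its projective cover, and the image of a generic endomorphism) to conclude $\mathrm{rad}\,\mathrm{End}_A(G)$ dies in the stable category. This is a finite check once one writes out the indecomposable projectives of $A$ from the quiver and relation $\beta\alpha\beta\alpha = 0$, but it requires care; everything after that is a formal consequence of Theorem \ref{thm:CharK1} and the standard identification $K_1(k) \cong k^{\times}$.
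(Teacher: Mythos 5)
Your overall plan — compute $\Lambda=\underline{\mathrm{End}}(G)$ by hand, feed it into Theorem~\ref{thm:CharK1}, then transport to $B$ via Proposition~\ref{prop:invEg1}/\ref{prop:StaK-group} — is sound and reaches the right answer, but it is a genuinely different route from the paper's. The paper does not compute $\mathrm{End}_A(G)$ at all: it first verifies, via an explicit pair of mutually inverse maps $\theta$ and $\phi$, that $\beta\alpha A \cong (A\beta\alpha)^{*}=\mathrm{Hom}_A(A\beta\alpha,A)$ as right $A$-modules, and then invokes the formula from \cite[Lemma~2.3]{CSZ18} for the stable endomorphism ring of a Gorenstein projective cyclic ideal over a monomial algebra, namely $\underline{\mathrm{End}}(G)\cong(\beta\alpha A\cap A\beta\alpha)/(\beta\alpha A\beta\alpha)$, which is visibly one-dimensional. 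The CSZ18-formula route is shorter once one has that lemma in hand and sidesteps any discussion of which maps factor through projectives; your route is more elementary and self-contained but requires the hands-on computation you acknowledge.

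About that hands-on step: the "only real obstacle" you flag actually evaporates if you push the direct computation one line further, and the dichotomy "$k$ or $k[\epsilon]/(\epsilon^2)$" together with the claim that "$\mathrm{rad}\,\mathrm{End}_A(G)$ consists precisely of maps factoring through projectives" is a detour. The module $G=A\beta\alpha$ is two-dimensional with basis $\{\beta\alpha,\alpha\beta\alpha\}$ (the relation $\beta\alpha\beta\alpha=0$ kills everything longer), with top $S_1$ supported at the vertex where $\beta\alpha$ sits and socle $S_2$ at the other vertex, and these two simples are non-isomorphic. Any $A$-endomorphism $f$ is determined by $f(\beta\alpha)\in e_1G=k\beta\alpha$, so $\mathrm{End}_A(G)\cong k$ already, with zero radical; the $k[\epsilon]/(\epsilon^2)$ case simply does not arise. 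Since $G$ is indecomposable and non-projective, $\mathrm{Id}_G$ does not factor through a projective, so $\underline{\mathrm{End}}(G)$ is a nonzero quotient of $k$, hence $\cong k$. (Note also that the statement $\mathrm{rad}\,\mathrm{End}_A(G)$ equals the maps factoring through projectives is not a general fact about indecomposables and would require justification if it were needed; here it is vacuous.) One last small point worth making explicit for Theorem~\ref{thm:CharK1}: the $G$ there must satisfy $\mathcal{GP}(A)=\mathrm{add}\,G$, so one should formally take $G'=A\oplus A\beta\alpha$; but $\underline{\mathrm{End}}(G')=\underline{\mathrm{End}}(A\beta\alpha)$ since projectives vanish in the stable category, so the conclusion $K_1^G(A)\cong K_1(k)\cong k^\times$ is unchanged, and Proposition~\ref{prop:invEg1}(3) then gives $K_1^G(B)\cong k^\times$ as well.
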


\begin{proof}
We have the following well-defined morphism of right $A$-modules
$$\theta: \beta\alpha A \longrightarrow (A\beta\alpha)^{*} = \mathrm{Hom}_{A}(A\beta\alpha, A),$$
which is given by $\theta(\beta\alpha x)(y) = yx$ for any $\beta\alpha x\in \beta\alpha A$ and $y\in A\beta\alpha$.

Let $f\in (A\beta\alpha)^{*} = \mathrm{Hom}_{A}(A\beta\alpha, A)$ be a morphism of left $A$-modules. Note that when $f\neq 0$, there are only two options to define it: $f(\beta\alpha) = \beta\alpha$, or $f(\beta\alpha) = \beta\alpha\beta$. Otherwise, let $f(\beta\alpha) = \beta$ for instance, then for $x=\beta\alpha$, we have $f(x\beta\alpha)=f(0)=0$ while $xf(\beta\alpha) = x\beta = \beta\alpha\beta$.

Now we have a map $\phi: (A\beta\alpha)^{*} = \mathrm{Hom}_{A}(A\beta\alpha, A)\rightarrow \beta\alpha A$, which is given by $\phi(f) = f(\beta\alpha)$. It is direct to check that $\phi\theta = \mathrm{Id}$ and $\theta\phi = \mathrm{Id}$. Then $\theta$ is an isomorphism. By \cite[Lemma 2.3]{CSZ18}, we have a $k$-linear isomorphism
$$\Lambda = \underline{\mathrm{End}}(G) \cong \frac{\beta\alpha A\cap A\beta\alpha}{\beta\alpha A\beta\alpha} \cong k.$$

For the field $k$, the determinant induces an isomorphism $\mathrm{det}: GL(k)/E(k)\rightarrow k^{\times}$; see for example \cite[Proposition 2.2.2]{Ros94}. In this case, the elementary matrix $E_{n}(k)$ is the matrix of determinant 1, and $E(k)$ is precise the special linear group of $k$. Hence, by Theorem \ref{thm:CharK1} we have $K_1^G(A)\cong k^{\times}$. This completes the proof.
\end{proof}

\begin{example}\label{eg2}
Let $A$ be the algebra given by the quiver
$$\xymatrix{
& 1 \ar[r]^{\alpha}  &2\ar[d]^{\beta}\\
& &3\ar[ul]^{\gamma}  }$$
with relations $\gamma\beta\alpha = \beta\alpha\gamma\beta = 0$. Let $B$ be the algebra given by the quiver
$$\xymatrix{ 1\ar@<0.5ex>[r]^{\rho} &2 \ar@<0.5ex>[l]^{\rho'}\ar@<0.5ex>[r]^{\delta} &3\ar@<0.5ex>[l]^{\delta'} }$$
with relations $\delta\rho = \rho'\rho = \rho'\delta' = \rho\rho'- \delta'\delta = 0$. By \cite[Example, pp.11]{Xi08}, there is a stable equivalence of adjoint type between $A$ and $B$. Then, there is a Quillen equivalence between $A$-mod and $B$-mod, which also induces an equivalence between the stable categories of finitely generated Gorenstein projective modules over $A$ and $B$.
\end{example}

\begin{proposition}\label{prop:invEg2}
For the algebras $A$ and $B$ in Example \ref{eg2}, we have the following:
\begin{enumerate}
\item Both $A$ and $B$ are CM-free, and moreover, $\underline{\mathcal{GP}}(A)\simeq \underline{\mathcal{GP}}(B)\simeq 0$.
\item For any $i\geq 0$, $K_i^G(A)\cong K_i^G(B) = 0$.
\end{enumerate}
\end{proposition}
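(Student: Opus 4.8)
The plan is to verify that the algebra $A$ of Example \ref{eg2} is CM-free, and then transport the conclusion to $B$ via Proposition \ref{prop:AdjType} and Corollary \ref{cor:invAdjType}. First I would compute the indecomposable projective $A$-modules $P_1, P_2, P_3$ from the quiver with relations $\gamma\beta\alpha = \beta\alpha\gamma\beta = 0$, read off their radical series, and identify the syzygies $\Omega X$ of the simple modules $S_1, S_2, S_3$. To show $A$ is CM-free it suffices to show that the only totally acyclic complex of projective $A$-modules is the split-exact one, equivalently that every Gorenstein projective $A$-module is projective. A convenient way is to exhibit that $A$ has finite finitistic dimension together with no non-projective module $M$ admitting a complete resolution; concretely, one checks that for each indecomposable non-projective module the minimal projective resolution is eventually not ``left-acyclic'' after applying $\mathrm{Hom}_A(-,A)$, or more directly that the only module $M$ with $\mathrm{Ext}^{i}_A(M,A)=0$ for all $i\geq 1$ and finite projective dimension zero forces $M$ projective. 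In practice, for a monomial algebra like this one, I would invoke the perfect-path criterion of \cite{CSZ18} exactly as was done for Example \ref{eg1}: $A$ is CM-free precisely when it has no perfect path, and a short combinatorial inspection of the two relations shows there is no perfect path, hence $\mathcal{GP}(A) = \mathcal{P}(A)$ and $\underline{\mathcal{GP}}(A) = 0$.

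Having $\underline{\mathcal{GP}}(A) = 0$, the stable equivalence of adjoint type between $A$ and $B$ recorded in Example \ref{eg2} gives, by Proposition \ref{prop:AdjType}, a Gorenstein stable Frobenius pair $(T_M, T_N)$, hence by Theorem \ref{thm:FQE}(3) mutually inverse triangle equivalences $\underline{\mathcal{GP}}(A) \simeq \underline{\mathcal{GP}}(B)$. Therefore $\underline{\mathcal{GP}}(B) \simeq 0$, which is exactly the statement that $B$ is CM-free: every finitely generated Gorenstein projective $B$-module is projective. This proves part (1). For part (2), since $K_i^G(-)$ is defined via the Waldhausen category $\mathcal{GP}(-)$ whose weak equivalence classes are the isomorphism classes in $\underline{\mathcal{GP}}(-)$ (Lemma \ref{lem:WEqu}), and $\underline{\mathcal{GP}}(A) = 0$ means $\mathcal{GP}(A)$ consists only of objects weakly equivalent to the zero object, the simplicial set $wS_\bullet \mathcal{GP}(A)$ is (weakly) contractible, so $K_i^G(A) = \pi_i(\Omega|wS_\bullet\mathcal{GP}(A)|) = 0$ for all $i\geq 0$; alternatively this is immediate from Definition \ref{def:GK-group} once one observes $\mathcal{GP}(A) = \mathcal{P}(A)$ is Waldhausen-equivalent to the trivial category after inverting the trivial weak equivalences, since all projectives are the trivial objects of the model structure. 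Then Corollary \ref{cor:invAdjType}(1) (or directly Proposition \ref{prop:StaK-group}) gives $K_i^G(B) \cong K_i^G(A) = 0$.

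The main obstacle I anticipate is the explicit verification that $A$ has no non-projective Gorenstein projective module: one must be careful with the relations $\gamma\beta\alpha = 0$ and $\beta\alpha\gamma\beta = 0$, compute the projective resolutions of $S_1, S_2, S_3$, and confirm that the complex $\mathrm{Hom}_A(\mathbf{P}, A)$ fails to be acyclic for any candidate totally acyclic $\mathbf{P}$. Invoking \cite{CSZ18} sidesteps the hands-on computation, but it still requires correctly listing all paths of $A$ and checking the (non-)existence of perfect pairs, which is the one genuinely algebra-specific step; everything after that is a formal consequence of the machinery already established in Sections 3, 4 and 5. I would therefore organize the proof as: (i) a one-paragraph combinatorial argument that $A$ is CM-free, (ii) transport along the adjoint-type stable equivalence of Example \ref{eg2} to get $B$ CM-free, and (iii) cite Proposition \ref{prop:StaK-group} / Corollary \ref{cor:invAdjType} together with the contractibility of the $K$-theory space of $\mathcal{GP}(A)$ to conclude the vanishing of all $K_i^G$.
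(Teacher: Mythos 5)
Your proposal is correct and follows essentially the same route as the paper: it identifies $A$ as a monomial (in fact Nakayama) algebra, invokes the perfect-path criterion of \cite{CSZ18} to conclude $A$ is CM-free, transports CM-freeness and the vanishing of $\underline{\mathcal{GP}}$ to $B$ via the stable equivalence of adjoint type through Proposition~\ref{prop:AdjType} and Corollary~\ref{cor:invAdjType}, and deduces the vanishing of all $K_i^G$. The only cosmetic difference is that the paper explicitly lists the perfect pairs $(\gamma,\beta\alpha),(\gamma\beta,\alpha),(\beta,\alpha\gamma\beta),(\beta\alpha,\gamma\beta),(\beta\alpha\gamma,\beta)$ whereas you defer that to ``a short combinatorial inspection,'' and you supply a more detailed justification for why $\underline{\mathcal{GP}}(A)\simeq 0$ forces the Waldhausen $K$-theory of $\mathcal{GP}(A)$ to be trivial.
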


\begin{proof}
Note that $A$ is a Nakayama monomial algebra. By the criterion in \cite[Lemma 6.1 (3)]{CSZ18}, we get all perfect pairs  $(\gamma, \beta\alpha)$, $(\gamma\beta, \alpha)$, $(\beta, \alpha\gamma\beta)$, $(\beta\alpha, \gamma\beta)$ and $(\beta\alpha\gamma, \beta)$. Then there is no perfect path in $A$. Hence, it follows from \cite[Corollary 4.2]{CSZ18} that $A$ is CM-free, that is, all Gorenstein projective $A$-modules are projective. This yields that $\underline{\mathcal{GP}}(A)\simeq 0$ and $K_i^G(A) = 0$. The assertions for $B$ hold since there is a Quillen equivalence between $A$-mod and $B$-mod.
\end{proof}

\begin{remark}
Since $A$ and $B$ in Example \ref{eg2} are of finite global dimension, Gorenstein projective modules over $A$ and $B$ coincide with projective modules over $A$ and $B$, respectively.
\end{remark}

\appendix

\section{\bf Waldhausen's construction}

For the clarity and completeness, in this section we recall some basic notions on simplicial sets, geometric realization of simplicial sets, and Waldhausen's $S_{\bullet}$-construction. For more details, we refer to \cite{Wal85, Web13} and other literatures on related topics.

Let $\Delta$ be the category whose objects are the finite ordered set $[n]=\{0,1,\cdots, n\}$ for integers $n\geq 0$, and whose morphisms are nondecreasing monomorphism functions, i.e. function $f: [n]\rightarrow [m]$ such that $i\leq j$ implies $f(i)\leq f(j)$ for $0\leq i,j\leq n$.

Recall that a simplicial set is a functor $\Delta^{op}\rightarrow \mathrm{Set}$. Alternatively, there is a combinatorial definition. A simplicial set consists of a sequence of sets $X_0, X_1,\cdots$ and for each $n\geq 0$, there are functions $d_i: X_n\rightarrow X_{n-1}$ and $s_i: X_n\rightarrow X_{n+1}$ for each $i$ with $0\leq i\leq n$ such that
\begin{subequations}
  \begin{align}
  &d_id_j = d_{j-1}d_i, ~~~~\mbox{ if } i <j,  \nonumber\\
  &s_is_j = s_{j+1}s_i, ~~~~\mbox{ if } i \leq j,  \nonumber\\
  & d_is_j= \left\{\begin{array}{ll}
~~s_{j-1}d_{i},  ~~~~\mbox{ if } i < j, \\[1\jot]
~~\mathrm{Id}, \quad\quad~~~~\mbox{ if } i = j, j+1,\\[1\jot]
~~s_jd_{i-1}, ~~~~\mbox{ if } i > j+1. \nonumber
\end{array} \right.
\end{align}
\end{subequations}
These equations can be illustrated in the category $\Delta$, where the face map $d_i$ is defined to be the map $\{0,1,\cdots, n\}\rightarrow \{0,\cdots,\hat{i},\cdots, n\}$ which deletes $i$, and the degeneracy  map
$s_i$ is defined to be the map $\{0,1,\cdots, n\}\rightarrow \{0,\cdots, i-1, i, i, i+1,\cdots, n\}$ which duplicates $i$.

We now outline how Waldhausen constructs a simplicial cateogory $S_{\bullet}\mathcal{C}$ from a Waldhausen category $\mathcal{C}$.

\begin{definition}\label{def:wSC}
Let $S_n\mathcal{C}$ be the category whose objects are sequences of $n$ cofibrations in $\mathcal{C}$
$$X_n:\quad 0 = C_0 \rightarrowtail C_1\rightarrowtail C_2\rightarrowtail \cdots \rightarrowtail C_n$$
together with a choice of every subquotient $C_{ij} = C_j/C_i$ for $0< i< j<n$. These choices are to be compatible in the sense that there is a commutative diagram
$$\xymatrix@C=20pt@R=15pt{
0\ar@{>->}[r] &C_1\ar@{>->}[r] & C_2 \ar@{>->}[r]\ar@{->>}[d] & C_3 \ar@{>->}[r]\ar@{->>}[d] &\cdots \ar@{>->}[r] & C_n \ar@{->>}[d]\\
& &C_{12}\ar@{>->}[r] & C_{13} \ar@{>->}[r]\ar@{->>}[d]  &\cdots \ar@{>->}[r] & C_{1n} \ar@{->>}[d]\\
& & &C_{23} \ar@{>->}[r] &\cdots \ar@{>->}[r] & C_{2n} \ar@{->>}[d]\\
& & & & &\cdots \ar@{->>}[d]\\
& & & & &C_{n-1,n}
}$$
It is convenient to let $C_i = C_{0i}$ and $C_{ii} = 0$ at times.

For $0< i\leq n$, the functor $d_i: S_n\mathcal{C}\rightarrow S_{n-1}\mathcal{C}$ is defined by omitting the row $C_{i*}$ and the column containing $C_i$, and reindexing the $C_{jk}$ as needed. The functor
$s_i: S_n\mathcal{C}\rightarrow S_{n+1}\mathcal{C}$ is defined by duplicating $C_i$ and reindexing with the normalization $C_{i,i+1} = 0$.

For two $n$-simplices $X_n, X_n^{'}\in S_n\mathcal{C}$, a weak equivalence is a map such that each $C_i\rightarrow C_i^{'}$ (hence, each $C_{ij}\rightarrow C_{ij}^{'}$) is a weak equivalence in $\mathcal{C}$. The map $C_i\rightarrow C_i^{'}$ is a cofibration when for every $0\leq i<j<k\leq n$ the map of cofibration sequences
$$\xymatrix@C=30pt@R=20pt{
C_{ij}\ar@{>->}[r] \ar[d] & C_{ik} \ar@{>->}[r]\ar[d]  & C_{jk} \ar[d]\\
C_{ij}^{'}\ar@{>->}[r]  & C_{ik}^{'} \ar@{>->}[r]  & C_{jk}^{'}
}$$
is a cofibration in $S_2\mathcal{C}$.

Then the $S_n\mathcal{C}$ fit together to form a simplicial Waldhausen category $S_{\bullet}\mathcal{C}$, and the subcategories $wS_{n}\mathcal{C}$ of weak equivalences fit together to form a simplicial category $wS_{\bullet}\mathcal{C}$.
\end{definition}

Now, we consider the geometric realization of the simplicial category $wS_{\bullet}\mathcal{C}$. Let $|\Delta^n|$ be the stand $n$-dimensional geometric simplex in $\mathbb{R}^{n+1}$, that is,
$$|\Delta^n| = \{ (r_0,\cdots,r_n): 0\leq r_i\leq 1, \sum r_i =1\}.$$
The coface map $D^i$ induces the inclusion $|\Delta^{n-1}|$ into $|\Delta^n|$ as the $i^{th}$ face, i.e. $D^i(r_0,\cdots,r_{n-1}) = (r_0,\cdots,r_{i-1}, 0, r_i, \cdots, r_{n-1})$. The codegeneracy map $S^i$ induces the projection
 $|\Delta^{n+1}|\rightarrow |\Delta^n|$ onto the $i^{th}$ face, i.e. $S^i(r_0,\cdots,r_{n+1}) = (r_0,\cdots,r_{i-1}, r_i + r_{i+1}, \cdots, r_{n+1})$.

\begin{definition}\label{def:GeoRea}
Let $X$ be a simplicial set. The realization $|X|$ of $X$ is given by $$|X| = (\coprod X_n \times |\Delta^n|)/\sim,$$
where $\sim$ is the equivalence relation generated by the relations $(x, D^i(p))\sim (d_i(x), p)$ for $x\in X_{n}$, $p\in |\Delta^{n-1}|$, and the relations $(x, S^i(p))\sim (s_i(x), p)$ for $x\in X_{n}$, $p\in |\Delta^{n+1}|$.
\end{definition}

In an intuitive sense, the equivalence relation collapses degeneracies and glues together faces. Say $x\in X_n$ an $n$-simplex in a simplicial set $X$. For each $n\geq 0$, topologize the product $X_n \times |\Delta^n|$ as the disjoint union of copies of the $n$-simplex $|\Delta^n|$ indexed by the elements $x$ of $X_n$. The first relation $(x, D^i(p))\sim (d_i(x), p)$ glues points $p\in |\Delta^{n-1}|$ on the boundary of $x$ to their corresponding location in the realization of $|\Delta^n|$ of $x\in X_{n}$, and thus does the work of gluing the faces of the realization of $x$ to the realizations of the faces of $x$.  The second relation  $(x, S^i(p))\sim (s_i(x), p)$ implies that a point $p$ in a realization $|\Delta^{n+1}|$ of $s_i(x)$ as ought to be collapsed via $S^i$ since $s_i(x)$ is degenerate for $x\in X_{n-1}$.

It is easy to see that in forming $|X|$ we can ignore every $n$-simplex of the form $s_i(x)\times |\Delta^n|$, and then the elements $s_i(x)$ are degenerate. The nondegenerate elements of $X_n$ index the $n$-cells of $|X|$, which implies that $|X|$ is a CW-complex.

Since $S_0\mathcal{C}$ is trivial, $|wS_{\bullet}\mathcal{C}|$ is a connected space.  The object of the category $S_1\mathcal{C}$ is of the form $0\rightarrowtail C_1$, and is thus isomorphic to $\mathcal{C}$. Hence $wS_1\mathcal{C}$ is isomorphic to $w\mathcal{C}$. The ``1-skeleton'' is obtained from the `0-skeleton'' by attaching of  $|wS_{1}\mathcal{C}|\times |\Delta^1|$. It results that the ``1-skeleton'' is naturally isomorphic to the suspension $\Sigma|w\mathcal{C}|$, and then $\Sigma|w\mathcal{C}|$ is a subspace of $|wS_{\bullet}\mathcal{C}|$. Therefore, there is an inclusion of  $|w\mathcal{C}|$ into the loop space $\Omega|wS_{\bullet}\mathcal{C}|$ of $|wS_{\bullet}\mathcal{C}|$.

We have the following definition from \cite[Definition IV 8.5]{Web13}.

\begin{definition}\label{def:K-group}
Let $\mathcal{C}$ be a small Walddhausen category. Its algebraic $K$-theory space is the loop space $\Omega|wS_{\bullet}\mathcal{C}|$. For  $i\geq 0$, the $K$-groups of $\mathcal{C}$ are defined to be the homotopy groups
$$K_i(C) = \pi_i(\Omega|wS_{\bullet}\mathcal{C}|) = \pi_{i+1}(|wS_{\bullet}\mathcal{C}|).$$
\end{definition}

It follows that each object of $\mathcal{C}$ yields an element of $\pi_{1}(|wS_{\bullet}\mathcal{C}|)$, and each weak equivalence in $\mathcal{C}$ yields an element of $\pi_{2}(|wS_{\bullet}\mathcal{C}|)$.

\section{\bf Quillen model categories revisited}
\label{sec:ModCat}

For the sake of convenience, we will briefly recall some notions and facts on model categories. We refer to \cite{DS95, Hov99, Qui67} for details.

\begin{definition}\label{def:MCat}
A {\em model structure} on a category $\mathcal{A}$ is a triple of classes of specified morphisms, called weak equivalences, cofibrations and fibrations, satisfying the following axioms:
\begin{enumerate}
\item If $f$ and $g$ are morphisms of $\mathcal{A}$ such that $gf$ is defined and if two of the three maps $f$, $g$ and $gf$ are weak equivalences, then so is the third.
\item If $f$ is a retract of $g$ and $g$ is a weak equivalence, cofibration, or fibration, then so is $f$.
\item In any commutative square
$$\xymatrix{
A\ar[d]_{i}\ar[r]^{f} &X\ar[d]^{p}\\
B\ar[r]^{g} &Y}$$
there exists a lift $h:B\rightarrow X$ such that $hi=f$ and $ph=g$ in either of the following two situations: (i) $i$ is a trivial cofibration (i.e. both a cofibration and a weak equivalence) and $p$ is a fibration, or (ii)
$i$ is a cofibration and $p$ is a trivial fibration (i.e. both a fibration and a weak equivalence).
\item Any morphism $f$ can be factored in two ways: (i) $f=pi$ where $i$ is a cofibration and $p$ is a trivial fibration, and (ii) $f=pi$ where $i$ is a trivial cofibration and $p$ is a fibration.
\end{enumerate}

A {\em model category} is a bicomplete category (i.e. a category possessing arbitrary small limits and colimits) equipped with a model structure.
\end{definition}

Given a model category $\mathcal{A}$, an object $X\in \mathcal{A}$ is called {\em trivial} if $0\rightarrow X$ (equivalently, $X\rightarrow 0$) is a weak equivalence. Similarly, it is called {\em cofibrant} if $0\rightarrow X$ is a cofibration, and it is called {\em fibrant} if $X\rightarrow 0$ is a fibration. The subcategories of trivial, cofibrant and fibrant objects will be denoted by $\mathcal{A}_{tri}$, $\mathcal{A}_{c}$ and $\mathcal{A}_{f}$, respectively.

We refer to \cite[Lemma 5.1, 5.3]{DS95} for the following.

\begin{definition}\label{def:QRrep}
Let $\mathcal{A}$ be a model category, and $X$ an object in $\mathcal{A}$. By factoring $0\rightarrow X$ as a cofibration $0\rightarrow Q(X)$ follows by a trivial fibration $Q(X)\rightarrow X$, we get a functor $X\rightarrow Q(X)$ such that $Q(X)$ is cofibrant. We refer to $Q$ as the {\em cofibrant replacement functor} of $\mathcal{A}$. Similarly,  one can factor the map $X\rightarrow 0$ to obtain a trivial cofibration $X\rightarrow R(X)$ with $R(X)$ fibrant, and moreover, there is a {\em fibrant replacement functor} $R$ together with a trivial cofibration $X\rightarrow R(X)$ for any object $X\in \mathcal{A}$.
\end{definition}

The following is immediately from \cite[Definition 1.3.1]{Hov99} and \cite[Lemma 1.3.4]{Hov99}.

\begin{definition}\label{def:QuiFun}
Suppose $\mathcal{A}$ and $\mathcal{B}$ are model categories, $F: \mathcal{A}\leftrightarrows \mathcal{B} :H$ is an adjoint pair of functors. The functor $F$ is said to be a left Quillen functor if $F$ preserves cofibrations and trivial cofibrations. Similarly, $H$ is said to be a right Quillen functor if $H$ preserves fibrations and trivial fibrations. We call  $(F, H)$ a \emph{Quillen adjunction} if $F$ is a left Quillen functor, or equivalently, $G$ is a right Quillen functor.
\end{definition}

For the bicomplete model category $\mathcal{A}$, the associated {\em homotopy category} ${\rm Ho}(\mathcal{A})$ is constructed by formally inverting the weak equivalences, i.e. localization with respect to weak equivalences.

\begin{definition}\label{def:derFun}
Let $F: \mathcal{A}\leftrightarrows \mathcal{B} :H$ be a Quillen adjunction between model categories $\mathcal{A}$ and $\mathcal{B}$. The total left derived functor
$LF$ is defined to be the composition
$$\xymatrix{{\rm Ho}(\mathcal{A})\ar[r]^{{\rm Ho}(Q)} & {\rm Ho}(\mathcal{A}_{c})\ar[r]^{{\rm Ho}(F)} & {\rm Ho}(\mathcal{B}).}$$
Similarly, the total right derived functor $RH$ is defined to be the composition
$$\xymatrix{{\rm Ho}(\mathcal{B})\ar[r]^{{\rm Ho}(R)} & {\rm Ho}(\mathcal{B}_{f})\ar[r]^{{\rm Ho}(H)} & {\rm Ho}(\mathcal{A}).}$$
\end{definition}

\begin{definition}\label{def:QuiEqu}
A Quillen adjunction $F: \mathcal{A}\leftrightarrows \mathcal{B} :H$ is called a Quillen equivalence if and only if for all cofibrant objects $X\in \mathcal{A}$ and fibrant objects $Y\in \mathcal{B}$, a map $F(X)\rightarrow Y$
is a weak equivalence in $\mathcal{B}$ if and only if the corresponding map $X\rightarrow H(Y)$ is a weak equivalence in $\mathcal{A}$.
\end{definition}

There are some useful criterions for checking when a given Quillen adjunction is a Quillen equivalence; see \cite[Proposition 1.3.13]{Hov99} and \cite[Corollary 1.3.16]{Hov99}. It is worth to note that $F: \mathcal{A}\leftrightarrows \mathcal{B} :H$ is a Quillen equivalence if and only if $LF: \mathrm{Ho}(\mathcal{A})\leftrightarrows \mathrm{Ho}(\mathcal{B}) :RH$ is an adjoint equivalence of homotopy categories;
see \cite[Proposition 1.3.13]{Hov99}.

\begin{ack*}
The author is grateful to the referee for helpful comments and suggestions which result in a significant improvement of the paper.
The author is greatly indebted to Professor Xi Changchang and Professor S. Estrada for helpful suggestions. The research work is supported by the National Natural Science Foundation of China (Grant No. 11871125).

After the paper was published, Professor Henrik Holm kindly points out the reference \cite{Hol15}, and give a specific comments on similarities and differences between Theorem \ref{thm:CharK1} and the interesting result in \cite[Theorem 2.12]{Hol15} for Gorenstein $K_1$ groups. The author is very grateful to him for the reference and helpful comments.
\end{ack*}

\bigskip

\end{document}